\documentclass[11pt, a4paper]{article}
\usepackage{amssymb, amsmath, amsthm, amsfonts,enumerate}
\usepackage{amsmath,setspace,scalefnt}
\usepackage{fancyhdr}
\usepackage[british]{babel}

\usepackage[usenames,dvipsnames,table]{xcolor}
\usepackage{tikz}
\usetikzlibrary{decorations.pathreplacing}
\usetikzlibrary{decorations.pathmorphing}
\usetikzlibrary{snakes}
\usetikzlibrary{calc,patterns,angles,quotes}
\usetikzlibrary{scopes}
\usetikzlibrary{backgrounds}
\usepackage{bbm}

\usepackage{kotex}
\usepackage{todonotes}
\usepackage{bbm,dsfont}

\usepackage{hyperref}
\usepackage[margin=2.8cm]{geometry}
\usepackage{enumitem,verbatim}
\usepackage[capitalise]{cleveref}

\newtheorem{theorem}{Theorem}[section]

\newtheorem{lemma}[theorem]{Lemma}

\theoremstyle{definition}

\newtheorem{claim}[theorem]{Claim}

\newenvironment{poc}{\begin{proof}[Proof of claim]}{\end{proof}}

\newcommand{\bP}{\mathbb{P}}
\newcommand{\bE}{\mathbb{E}}

\newcommand{\rV}{\mathrm{Var}}

\newcommand{\vol}{\mathrm{vol}}
\newcommand{\Sd}{\mathbb{S}_{d-1}}

\newcommand{\eps}{\varepsilon}
\newcommand{\bN}{\mathbb{N}}

\newcommand{\bR}{\mathbb{R}}

\newcommand{\dd}{\,\mathrm{d}}

\definecolor{darkblue}{rgb}{0,0,0.5}
\hyphenation{sub-graph pro-ba-bi-lis-tic ver-ti-ces parti-cular here-ditary}

\definecolor{armygreen}{rgb}{0.29, 0.33, 0.13}
\definecolor{darkmagenta}{rgb}{0.55, 0.0, 0.55}
\definecolor{lightseagreen}{rgb}{0.13, 0.7, 0.67}
\definecolor{darktangerine}{rgb}{1.0, 0.66, 0.07}
\definecolor{deepcarmine}{rgb}{0.66, 0.13, 0.24}
\definecolor{darkblue}{rgb}{0.0, 0.0, 0.55}
\definecolor{darkraspberry}{rgb}{0.53, 0.15, 0.34}
\definecolor{darkpastelgreen}{rgb}{0.01, 0.75, 0.24}
\definecolor{darkcyan}{rgb}{0.0, 0.55, 0.55}

\newcommand{\hide}[1]{}
\newcommand{\me}{\mathrm{e}}

\title{New lower bounds on kissing numbers and spherical codes in high dimensions}

\author{
	Irene Gil Fern\'andez
	\thanks{Mathematics Institute and DIMAP, University of Warwick, UK. Email: {\tt irene.gil-fernandez@warwick.ac.uk}. IGF is supported by the Warwick Mathematics Institute Centre for Doctoral Training, and gratefully acknowledges funding from the University of Warwick and the UK Engineering and Physical Sciences Research Council (Grant number: EP/TS1794X/1).}
	\and
	Jaehoon Kim
	\thanks{Department of Mathematical Sciences, KAIST, South Korea. Email: {\tt jaehoon.kim@kaist.ac.kr}. J.K. was supported by the POSCO Science Fellowship of POSCO TJ Park Foundation, and by the National Research Foundation of Korea (NRF) grant funded by  the Korea government(MSIT) No. RS-2023-00210430.}
	\and
	Hong Liu
	\thanks{Extremal Combinatorics and Probability Group (ECOPRO), Institute for Basic Science (IBS), Daejeon, South Korea. Email: {\tt hongliu@ibs.re.kr}. H.L. was supported by the Institute for Basic Science (IBS-R029-C4) and the UK Research and Innovation Future Leaders Fellowship MR/S016325/1.}
	\and
	Oleg Pikhurko
	\thanks{Mathematics Institute and DIMAP, University of Warwick, UK.  Email: {\tt O.Pikhurko@warwick.ac.uk}. O.P. was supported by ERC Advanced Grant 101020255 and Leverhulme Research Project Grant RPG-2018-424.}
}

\date{\today}

\begin{document}
	
	\maketitle
	
	\begin{abstract}
	Let the \emph{kissing number} $K(d)$ be the maximum number of non-overlapping unit balls in $\mathbb R^d$ that can
	touch a given unit ball. Determining or estimating the number $K(d)$ has a long history, with the value of $K(3)$
	being the subject of a famous discussion between Gregory and Newton in 1694. We prove that, as the dimension $d$ goes to infinity, 
	$$
	K(d)\ge (1+o(1)){\frac{\sqrt{3\pi}}{4\sqrt2}}\,\log\frac{3}{2}\cdot d^{3/2}\cdot \Big(\frac{2}{\sqrt{3}}\Big)^{d},
	$$
 thus improving the previously best known bound of Jenssen, Joos and Perkins~\cite{JenssenJoosPerkins18} by a factor of
 $\log(3/2)/\log(9/8)+o(1)=3.442...$\,. Our proof is based on the novel approach from~\cite{JenssenJoosPerkins18} that uses the hard sphere model of an appropriate fugacity. Similar constant-factor improvements in lower bounds
 are also obtained for general spherical codes, as well as for the expected density of random sphere packings in the Euclidean space $\mathbb R^d$.
	\end{abstract}

\section{Introduction}\label{sec-intro}

The \textit{kissing number} in dimension $d$, denoted by $K(d)$, is the maximum number of non-overlapping  (i.e., having disjoint interiors) unit balls in the Euclidean $d$-dimensional space $\mathbb R^d$ that can touch a given unit ball.

It is easy to see that $K(1)=2$ and $K(2)=6$. Whether $K(3)$ is 12 or larger was the
subject of a famous discussion between David Gregory and Isaac Newton in 1694;
see e.g.~\cite{PfenderZiegler2004} for a historic account.
This problem was finally resolved  in 1953, by Sch\"{u}tte and van der Waerden~\cite{Waerdenvander1952/53} who proved that $K(3)=12$. New proofs of $K(3)=12$ (see e.g. Maehara~\cite{Maehara2001}, B\"{o}r\"{o}czky~\cite{Borozsky2003} and Anstreicher~\cite{Anstreicher2004})
were discovered more recently, as it continues to be a problem of interest.  The only other known
values are $K(4)=24$, proved by Musin~\cite{Musin2008} using a modification of Delsarte's method; and $K(8)=240$ and $K(24)=196560$ were proved in 1979 by Levenshtein~\cite{Lev79} and, independently, by Odlyzko and Sloane~\cite{Odlyzko1979} using Delsarte's method. For a survey of kissing numbers, see~\cite{boyvalenkov2015survey}.

In large dimensions, the best known bounds are exponentially far apart. 
For the upper bound, the first exponential improvement over the easy bound of $O(2^d)$ (coming from a volume argument) was obtained by Rankin \cite{Rankin55}:
$$K(d)\leq (1+o(1))\frac{\sqrt{\pi}}{2\sqrt{2}}\cdot d^{3/2}\cdot 2^{d/2}.$$
This was improved later by a breakthrough of Kabatjanski\u{\i} and Leven\v{s}te\u{\i}n~\cite{Kabat78}, using the linear programming method of Delsarte~\cite{Delsarte72}, to
$$K(d)\leq 2^{0.4041\ldots\cdot d}.$$

For the lower bound, Chabauti~\cite{Chabauty53}, Shannon~\cite{Shannon59}, and Wyner~\cite{Wyner65} independently observed that
\begin{equation}\label{eq-lower-bound-kissing-number}
	K(d)\geq(1+o(1))\frac{\sqrt{3\pi d}}{2\sqrt{2}}\Big(\frac{2}{\sqrt{3}}\Big)^d,
\end{equation}
 as every maximal arrangement has so many balls.
An improvement on the lower bound~\eqref{eq-lower-bound-kissing-number}, by a multiplicative factor $\Theta(d)$, was obtained by Jenssen, Joos and Perkins~\cite{JenssenJoosPerkins18} whose more general result (stated as~\cref{thm:sph-code-JJP} here) gives that
	\begin{equation}\label{eq:JJP}
	K(d)\geq (1+o(1))\frac{\sqrt{3\pi}}{2\sqrt{2}}\log\frac{3}{2\sqrt{2}}\cdot d^{3/2}\cdot \Big(\frac{2}{\sqrt{3}}\Big)^{d}.
	\end{equation}

We give a further constant factor improvement on the kissing numbers in high dimensions.
\begin{theorem}\label{thm:kissing}
		As $d\to\infty$, we have 
		$$K(d)\geq (1+o(1))\,{\frac{\sqrt{3\pi}}{4\sqrt2}}\,\log\frac{3}{2}\cdot d^{3/2}\cdot \Big(\frac{2}{\sqrt{3}}\Big)^{d}.$$
\end{theorem}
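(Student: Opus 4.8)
The plan is to sharpen the hard-core sphere model argument of Jenssen, Joos and Perkins~\cite{JenssenJoosPerkins18}. We use the standard reformulation $K(d)=A(d,\pi/3)$, where $A(d,\theta)$ is the maximum number of points on the unit sphere $\Sd\subseteq\bR^{d}$ with pairwise angular distance at least $\theta$ (two unit balls touching a fixed unit ball have disjoint interiors exactly when their centres subtend an angle of at least $\pi/3$). Fix $\theta=\pi/3$, let $\mu$ be the uniform probability measure on $\Sd$, and let $v=v_{d}(\theta)=\mu(\mathrm{cap}_{\theta})$ denote the measure of a spherical cap of angular radius $\theta$; a Laplace estimate gives $v^{-1}=(1+o(1))\,\tfrac{\sqrt{3\pi}}{2\sqrt2}\,d^{1/2}(2/\sqrt3)^{d}$, which is precisely the greedy bound~\eqref{eq-lower-bound-kissing-number}. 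For a parameter $\lambda=\lambda(d)>0$ to be chosen, let $\mathbf{X}=\mathbf{X}_{\lambda}$ be the hard-core point process on $\Sd$ at fugacity $\lambda$ with hard-core radius $\theta$, so that a finite configuration whose points are pairwise at distance at least $\theta$ carries weight proportional to $\lambda$ raised to its size. Every realisation of $\mathbf{X}_{\lambda}$ is a feasible code, so $A(d,\theta)\ge\bE|\mathbf{X}_{\lambda}|$; it remains to bound $\bE|\mathbf{X}_{\lambda}|$ from below and optimise $\lambda$.

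The core of the argument is an occupancy-method estimate for $\bE|\mathbf{X}_{\lambda}|$. By the Georgii--Nguyen--Zessin equation, $\bE|\mathbf{X}_{\lambda}|=\lambda q_{\lambda}$, where $q_{\lambda}=\bP\big(\mathbf{X}_{\lambda}\cap\mathrm{cap}_{\theta}(u)=\emptyset\big)$ is the probability that a $\mu$-random ``ghost'' point $u$ can be inserted. The crude bound $1-q_{\lambda}\le\bE\,|\mathbf{X}_{\lambda}\cap\mathrm{cap}_{\theta}(u)|=\lambda v q_{\lambda}$ only recovers the greedy estimate, so one must exploit correlations: the points of $\mathbf{X}_{\lambda}$ that fall inside a single conflict cap are themselves pairwise at distance at least $\theta$, which makes the sphere conflict graph behave, at scale $v$, much more like a triangle-free graph than like a union of cliques. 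One captures this through a higher-order inclusion--exclusion for $q_{\lambda}$ (equivalently a differential inequality in $\lambda$) whose correction terms are asymptotic $\mu$-measures of intersections of several conflict caps; the basic geometric input is that for $x,y\in\Sd$ at angular distance $s\in[\theta,2\theta]$ the lens $\mathrm{cap}_{\theta}(x)\cap\mathrm{cap}_{\theta}(y)$ has $\mu$-measure of exponential order $(\sin\phi)^{d}$ for an explicit $\phi=\phi(\theta,s)$ with $\cos\phi(\theta,\theta)=\cos\theta/\cos(\theta/2)$ (so $\sin\phi=\sqrt{2/3}$ when $\theta=\pi/3$ and $s=\theta$), which again follows from a spherical Laplace computation. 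Feeding the \emph{sharp} forms of these estimates through the argument and then optimising $\lambda$ (the ``appropriate fugacity'' of the abstract) yields $\bE|\mathbf{X}_{\lambda}|\ge(1-o(1))\cdot\tfrac12\log\tfrac32\cdot d\cdot v^{-1}$; this is exactly where we improve on~\cite{JenssenJoosPerkins18}, whose handling of this step is lossier and, for kissing numbers, produces $\log\frac{3}{2\sqrt2}=\tfrac12\log\tfrac98$ of~\eqref{eq:JJP} in place of the sharp $\tfrac12\log\tfrac32$, whence the factor $\log(3/2)/\log(9/8)=3.442\ldots$.

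Substituting $v^{-1}=(1+o(1))\,\tfrac{\sqrt{3\pi}}{2\sqrt2}\,d^{1/2}(2/\sqrt3)^{d}$ into the bound of the previous paragraph gives $\bE|\mathbf{X}_{\lambda}|\ge(1+o(1))\,\tfrac{\sqrt{3\pi}}{4\sqrt2}\log\tfrac32\cdot d^{3/2}(2/\sqrt3)^{d}$, which is \cref{thm:kissing}. Running the same scheme for a general angle $\theta$ (respectively, for a hard-sphere model in $\bR^{d}$) yields the analogous constant-factor improvements for spherical codes (respectively, for the expected density of random sphere packings).

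I expect the main obstacle to be the occupancy estimate: one must arrange the inclusion--exclusion so that the surviving correction term is the sharp one, and then show that everything else --- triple and higher cap overlaps, the dependence of the process conditioned on $\{\mathbf{X}_{\lambda}\cap\mathrm{cap}_{\theta}(u)=\emptyset\}$ on the deleted cap, and the spread of the distance $s\in[\theta,2\theta]$ from the ghost to its nearest point of $\mathbf{X}_{\lambda}$ --- contributes only lower-order terms. The delicate point is making the error terms in the spherical Laplace and measure-concentration estimates uniform enough in $d$ to survive multiplication by the exponentially large factor $v^{-1}$; once the sharp occupancy bound is established, the remaining asymptotics are routine.
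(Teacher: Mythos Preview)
Your outline sets up the hard-cap model correctly and arrives at the right numerology at the end, but the central step --- the ``occupancy estimate'' that is supposed to deliver the constant $\tfrac12\log\tfrac32$ --- is not actually there, and the one piece of geometry you do compute points to the \emph{old} constant, not the new one.

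Concretely: the lens quantity you write down, $\cos\phi(\theta,\theta)=\cos\theta/\cos(\theta/2)$, gives $\sin\phi=\sqrt{2/3}$ at $\theta=\pi/3$. That is exactly $\sin q(\theta)$ in the paper's notation, the angular radius of the smallest cap containing the intersection of two $\theta$-caps whose centres are at angle $\theta$. Plugging this into the local analysis yields $\log\bigl(\sin\theta/\sin q(\theta)\bigr)=\tfrac12\log\tfrac98$, which is precisely the Jenssen--Joos--Perkins constant in~\eqref{eq:JJP}. So the ``sharp forms of these estimates'' you invoke are in fact the JJP estimates themselves; nothing in your sketch explains what was \emph{lossy} in~\cite{JenssenJoosPerkins18} or how you avoid it. An inclusion--exclusion or differential-inequality argument built on the pair-lens measure will reproduce the JJP bound, not improve it.

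The paper's gain comes from two ingredients that are absent from your plan. First, a rearrangement inequality on $\Sd$ (Baernstein--Taylor) reduces the worst case for the externally uncovered region $\mathbf T$ to a spherical cap of the same measure. Second, rather than using a bound on the pair-lens that is uniform in $s(\mathbf T)$ (which is what produces $q(\theta)$), one keeps track of $s(\mathbf T)$ and shows that the critical radius is $\theta'$ with $\sin\theta'=\sqrt2\,\sin(\theta/2)$ (so $\sin\theta'=1/\sqrt2$ at $\theta=\pi/3$); this is the break-even point where $\sigma(\tau,\theta)=\tau$ in~\eqref{eq:cap-inter-angl-radius}--\eqref{eq:optimal-radius2}. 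Combined with a direct lower bound on the expected number of centres in $\mathbf T$ (Lemma~\ref{lm:pois-mean-sph}, replacing the entropy argument of~\cite{JenssenJoosPerkins18}), this yields $\log\bigl(\sin\theta/\sin\theta'\bigr)=\tfrac12\log\tfrac32$. Your sketch never mentions rearrangement, never introduces $\theta'$, and never explains how a measure-dependent refinement enters; without those, the argument stalls at the JJP constant.
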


The leading constant ${\frac{\sqrt{3\pi}}{4\sqrt2}}\log\frac{3}{2}$ is about $0.2200...$, which is a factor of $3.442...$ improvement over the bound in~\eqref{eq:JJP}.

In fact, we can also improve lower bounds on the more general problem of the maximum size of a spherical code. 
A \emph{spherical code of angle $\theta$ in dimension $d$} is a set of vectors (also called \emph{codewords}) $x_1,\dots,x_k$ in the \emph{unit sphere}
$$
 \Sd:=\{x\in \mathbb R^d\mid \|x\|=1\},
$$
 such that $\langle x_i,x_j\rangle\leq\cos\theta$ for every $i\neq j$, that is, the angle between any two distinct vectors is at least $\theta$. The \emph{size} of such spherical code is $k$, the number of vectors. Let $A(d,\theta)$ denote the maximum size of a spherical code of angle $\theta$ in dimension~$d$.

Looking at the definition of a spherical code $x_1,\dots,x_k$ of angle $\theta$, we see that it corresponds to a set of non-overlapping caps $C_{\theta/2}(x_1),\dots,C_{\theta/2}(x_k)$, where
$$
C_{\theta}(x):=\{y\in \Sd: \langle x,y\rangle\geq\cos\theta\}
$$
denote the closed spherical cap of angular radius $\theta$ around $x\in\Sd$, Hence, determining $A(d,\theta)$ is equivalent to determining the maximum number of non-overlapping spherical caps of angular radius $\theta/2$ that can be packed in $\Sd$.
 
The kissing arrangement of unit spheres is a special case of spherical codes: if the centres of the kissing spheres are projected radially to the central unit sphere, then the obtained points form a spherical code of angle $\pi/3$ (and this transformation can be reversed). Thus $K(d)=A(d,\pi/3)$.

For $\theta\geq \pi/2$, Rankin~\cite{Rankin55} determined $A(d,\theta)$ exactly, so from now on we will assume that $\theta\in(0,\pi/2)$. For a measurable subset $A$ of $\Sd$, we write $s(A)$ for the normalised surface measure of $A$, i.e. $s(A):=\hat{s}(A)/\hat{s}(\Sd)$, where $\hat{s}(\cdot)$ is the usual surface measure (that is, the $(d-1)$-dimensional Hausdorff measure). Let us denote $s_d(\theta):=s(C_{\theta}(x))$.
For large $d$, the best known upper bound is by Kabatjanski\u{\i} and Leven\v{s}te\u{\i}n~\cite{Kabat78}
and states that
\begin{equation}\label{eq:KL}
A(d,\theta)\leq \me^{(1+o(1))\,\phi(\theta)\cdot d},
\end{equation}
for certain $\phi(\theta)>-\log \sin\theta$. 

The easy covering bound (observed by Chabauti~\cite{Chabauty53}, Shannon~\cite{Shannon59} and Wyner~\cite{Wyner65}) states that 
\begin{equation}\label{eq:CSW}
A(d,\theta)\geq s_d(\theta)^{-1}.
\end{equation}
Note that for fixed $\theta<\pi/2$, we have
\begin{equation}\label{eq:CapVol} 
 s_d(\theta)=(1+o(1))\,\frac{\sin^{d-1}\theta}{\sqrt{2\pi d}\,\cos(\theta)},\quad \mbox{as $d\to\infty$},
 \end{equation}
 and thus the bounds in~\eqref{eq:KL} and~\eqref{eq:CSW} are exponentially far apart from each other. The lower bound was improved by
 Jenssen, Joos and Perkins~\cite{JenssenJoosPerkins19}  by a linear factor in the dimension, showing that $A(d,\theta)=\Omega(d\cdot s_d(\theta)^{-1})$ as $d\to\infty$; see~\cref{thm:sph-code-JJP} here for the exact statement.

We also improve the lower bound on the maximum size of a spherical code by a constant factor (that depends on the angle $\theta\in (0,\pi/2)$) as $d\to\infty$:

\begin{theorem}\label{thm:sph-code}
	Let $\theta\in(0,\pi/2)$ be fixed. Then,
	$$A(d,\theta)\geq (1+o(1))\log\frac{\sin\theta}{\sqrt{2}\sin\frac{\theta}{2}}\cdot d\cdot s_d(\theta)^{-1}, \qquad \mbox{as } d\to\infty.$$
\end{theorem}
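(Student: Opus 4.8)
The plan is to follow the hard-core model approach of Jenssen, Joos and Perkins, but to squeeze out the extra constant factor by optimizing the fugacity more carefully and by controlling the pair-correlation term with a sharper estimate. Concretely, place points on $\Sd$ according to the hard-sphere (hard-cap) model $\mu_\lambda$ at fugacity $\lambda$: this is the point process on $\Sd$ whose density with respect to the Poisson process of intensity $\lambda\,\hat s$ is proportional to $\mathbbm 1[\text{all pairwise angular distances} \ge \theta]$, equivalently no two caps $C_{\theta/2}(x_i)$ overlap. Writing $\bE_\lambda$ for expectation under this model, the expected number of points $\bE_\lambda[N]$ is a lower bound (after a standard concentration / "there exists a configuration of at least the average size" argument) for $A(d,\theta)$. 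The first step is therefore to record the occupancy formula: by the Georgii--Nguyen--Zessin / Mecke equation, $\bE_\lambda[N] = \lambda\,\hat s(\Sd)\cdot \bP_\lambda[\text{a fixed point can be added}] = \lambda\,\hat s(\Sd)\cdot \bE_\lambda[\,s(\text{region blocked by existing caps})\text{ is }0\,]$; more usefully one differentiates $\log Z_\lambda$, or uses the identity $\frac{\partial}{\partial\lambda}\bE_\lambda[N]\ge 0$ together with a bound relating $\bE_\lambda[N]$ to the expected fraction of $\Sd$ left uncovered by the caps $C_\theta(x_i)$ of the configuration.

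The key quantitative step is to bound the expected uncovered volume from below. Let the configuration be $X=\{x_1,\dots,x_N\}$ and let $U = \Sd \setminus \bigcup_i C_\theta(x_i)$ be the set of points at angular distance $>\theta$ from every $x_i$; any point of $U$ could be added to $X$, so $\bE_\lambda[s(U)]$ controls how far we are from a maximal configuration, and at a maximal configuration $U=\emptyset$. The standard inclusion–exclusion / union bound gives $\bE_\lambda[s(U)] \ge 1 - \bE_\lambda[N]\cdot s_d(\theta) + (\text{second-order correction})$, and the whole game is that the second-order correction is \emph{positive} and non-negligible because two caps $C_\theta(x_i),C_\theta(x_j)$ with $\langle x_i,x_j\rangle$ not too large must overlap, so their union covers strictly less than $2s_d(\theta)$. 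Quantitatively, one writes $\bE_\lambda\big[\sum_{i<j} s(C_\theta(x_i)\cap C_\theta(x_j))\big]$ and uses the Mecke formula a second time to express it as $\tfrac12\lambda^2 \int\!\!\int s(C_\theta(x)\cap C_\theta(y))\,\mathbbm 1[\langle x,y\rangle\le\cos\theta]\,\rho(x,y)\,\dd x\,\dd y$, where $\rho$ is the two-point correlation of $\mu_\lambda$; here $\rho \le 1$ (the hard-core model is repulsive, so the pair correlation is at most $1$ where the indicator is nonzero), which gives a clean upper bound and hence a clean lower bound on $\bE_\lambda[s(U)]$. Combining, one gets an inequality of the shape $0 = \bE[s(U)] \ge 1 - \bE_\lambda[N] s_d(\theta) + c\,\lambda^2\cdot I_d(\theta)$ at the maximal configuration, or more robustly a differential/functional inequality for $\bE_\lambda[N]$ in $\lambda$ that one then integrates.

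The asymptotic heart of the matter is evaluating the overlap integral. For $x,y\in\Sd$ with angular distance $\psi\in(\theta,2\theta)$ between them, $s(C_\theta(x)\cap C_\theta(y))$ is the measure of a "lune-like" region; as $d\to\infty$ this is exponentially concentrated and, after normalizing by $s_d(\theta)$, the relevant quantity is governed by the exponential rate $\sin\psi/2$ versus $\sin\theta$ — the crucial numerical input is that the worst (largest) such overlap, integrated against the available volume of $y$'s at distance $\psi$ (which scales like $\sin^{d-1}\psi$), produces the factor $\sin\theta/(\sqrt2\,\sin\frac\theta2)$ inside the logarithm rather than the cruder $3/(2\sqrt2)$ of~\cite{JenssenJoosPerkins19}; one optimizes $\lambda$ of the form $\lambda = c\cdot d^{1/2}\cdot s_d(\theta)^{-1}$ (so that $\bE_\lambda[N]\asymp d\cdot s_d(\theta)^{-1}$) and a short calculus exercise in the constant $c$ yields the stated leading term $\log\frac{\sin\theta}{\sqrt2\sin(\theta/2)}\cdot d\cdot s_d(\theta)^{-1}$. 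The main obstacle, and where care is genuinely needed, is the second-order overlap estimate: one must show that the pair-correlation function of the hard-cap model does not destroy the gain — i.e. that the overlapping pairs are not suppressed by more than a $1+o(1)$ factor relative to the Poisson process, which in turn uses that at this fugacity the expected number of caps meeting a typical cap is $o(1)$ in the appropriate sense, so the local picture is Poisson-like. Verifying~\cref{thm:kissing} is then the special case $\theta=\pi/3$, where $\sin(\pi/3)=\sqrt3/2$ and $\sin(\pi/6)=1/2$ give $\frac{\sin\theta}{\sqrt2\sin(\theta/2)} = \frac{\sqrt3/2}{\sqrt2\cdot(1/2)} = \sqrt{3/2}$, so $\log\sqrt{3/2} = \tfrac12\log\tfrac32$, and $s_d(\pi/3)^{-1} = (1+o(1))\sqrt{2\pi d}\cos(\pi/3)\,(2/\sqrt3)^{d-1} = (1+o(1))\tfrac12\sqrt{2\pi d}\cdot\tfrac{2}{\sqrt3}\,(2/\sqrt3)^{d-1}$, and multiplying through by $d$ recovers $\frac{\sqrt{3\pi}}{4\sqrt2}\log\frac32\cdot d^{3/2}\cdot(2/\sqrt3)^d$ as in~\cref{thm:kissing}.
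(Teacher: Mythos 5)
Your proposal correctly starts from the hard cap model and the occupancy identity $\bE_\lambda[N]=\lambda\,\bE_\lambda[s(U)]$, but the route you take from there cannot, as written, produce the required factor of $d$. First, the Bonferroni step goes the wrong way: $s(\bigcup_i C_\theta(x_i))\ge\sum_i s(C_\theta(x_i))-\sum_{i<j}s(C_\theta(x_i)\cap C_\theta(x_j))$ yields $s(U)\le 1-Ns_d(\theta)+\sum_{i<j}s(C_\theta(x_i)\cap C_\theta(x_j))$, an \emph{upper} bound on the uncovered volume, whereas you need a lower bound; there is no two-term truncated inclusion--exclusion lower bound that exceeds $1-Ns_d(\theta)$. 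Second, since $s(U)\le 1$ the occupancy identity gives $\bE[N]\le\lambda$ unconditionally, so your proposed fugacity $\lambda=c\,d^{1/2}s_d(\theta)^{-1}$ caps $\bE[N]$ a factor $\Theta(d^{1/2})$ \emph{below} the target; one must take $\lambda$ exponentially large in $d$ relative to $s_d(\theta)^{-1}$, as the paper does with $\lambda=(\sqrt2\sin(\theta/2)+\delta)^{-d}$.

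More fundamentally, the factor of $d$ (in both~\cite{JenssenJoosPerkins18} and here) is not obtained by a first/second-moment computation of $\bE[s(U)]$, which is tautologically the same quantity as $\bE[N]/\lambda$. It comes from a local analysis enabled by the spatial Markov property: one samples an independent uniform $v\in\Sd$, looks at the externally uncovered set $\mathbf T=\mathbf T(X,v)\subseteq C^\circ_\theta(v)$, and uses the identity $\alpha_d^\theta(\lambda)=s_d(\theta)^{-1}\,\bE_{X,v}[\alpha_{\mathbf T}^\theta(\lambda)]$ from Lemma~\ref{lem-properties-density-hard-cap}(v); the whole work is then to show $\alpha_{\mathbf T}^\theta(\lambda)=\Omega(d)$ for typical $(X,v)$, i.e.\ that $\Theta(d)$ caps can be placed in $\mathbf T$ essentially without collisions once $s(\mathbf T)$ is not too small (Lemma~\ref{lm:pois-mean-sph} together with Claim~\ref{claim-pk-kissing}), while the complementary event where $s(\mathbf T)$ is small is handled by Lemma~\ref{lem-properties-density-hard-cap}(iv). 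Your proposal contains no analogue of this dichotomy. Finally, the constant improvement over~\cite{JenssenJoosPerkins18} has a specific geometric source that you do not identify: the spherical rearrangement inequality (Lemma~\ref{lem:sph-rearrange}, via Baernstein's theorem) shows that $\bE_u[s(C_\theta(u)\cap T)]$ is maximised, among all $T$ of a given measure $t=s_d(\alpha)$, when $T$ is a cap, and Lemma~\ref{lm:cap-intersection} then gives the $t$-dependent bound $2s_d(\sigma(\alpha,\theta))$ in place of the worst-case $2s_d(q(\theta))$ of~\cite{JenssenJoosPerkins18}, which is precisely what lowers $\sin q(\theta)$ to $\sqrt2\sin(\theta/2)$ inside the logarithm. ``Optimizing $\lambda$'' and ``a sharper pair-correlation estimate'' alone do not produce this gain.
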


Observe that Theorem~\ref{thm:kissing} is obtained from \cref{thm:sph-code} by setting $\theta=\pi/3$ and using~\eqref{eq:CapVol}.

\medskip

We also look at sphere packings in the Euclidean space $\bR^d$ of maximum density.
 Given a radius $r>0$,
a \emph{(sphere) packing} $X$ is a subset of $\mathbb{R}^d$ such that
every two distinct elements of $X$ are at distance at least $2r$ (or, equivalently, if the radius-$r$ balls centred at $X$ are non-overlapping). 
The \emph{sphere packing density} 
is defined by
\begin{equation}\label{eq:density}
\theta(d):=\sup_{\mathrm{packing}\ {X}}\ \limsup_{R\to\infty}\frac{\mathrm{vol}(B_R(0)\cap (\cup_{x\in X} B_r(x))}{\mathrm{vol}(B_R(0))},
\end{equation}
 where $B_R(x)$ denotes the closed ball of radius $R>0$ centred at $x\in\mathbb R^d$ and $0\in\mathbb R^d$ is the origin. In other words, we try to cover asymptotically as large as possible fraction of the volume of a growing ball in~$\mathbb R^d$ by non-overlapping balls of fixed radius~$r$.

It is clear that $\theta(1)=1$. Thue~\cite{Thue1892}, in 1892, proved that $\theta(2)=\pi/\sqrt{12}=0.9068\dots$\,, which is achieved by the hexagonal lattice; Hales~\cite{Hales2005} proved in 2005 that $\theta(3)=\pi/\sqrt{18}=0.7404...$\,. The cases $d=8$ and $d=24$ have been recently resolved due to the work of Viazovska~\cite{Viazovska2017} ($d=8$) and Cohn, Kumar, Miller, Radchenko and Viazovska~\cite{Cohnetal2017} ($d=24$). 

However, the value of $\theta(d)$ is unknown for any other $d$. When $d\to\infty$, there are some upper and lower bounds for $\theta(d)$, but they are exponentially far apart. The best known upper bound $\theta(d)\leq 2^{-(0.5990...+o(1))\cdot d}$ is due to  Kabatjanski\u{\i} and Leven\v{s}te\u{\i}n~\cite{Kabat78}, obtained by applying their bounds on spherical codes. As shown by Cohn and Zhao~\cite{CohnZhao2014}, the more direct approach of Cohn and Elkies~\cite{CohnElkies03} gives at least as strong upper bound on $\theta(d)$ as that of~\cite{Kabat78}.

The trivial lower bound $\theta(d)\geq 2^{-d}$ (take a maximal sphere packing and observe that balls of doubled size cover the whole space) was improved by a factor of $d$ by Rogers~\cite{Rogers47} by analysing a random lattice packing of $\bR^d$ by making use of the Siegel mean-value theorem. Later, there have been several subsequent improvements to the constant by Davenport and Rogers~\cite{DavRogers47}, Ball~\cite{Ball92}, Vance~\cite{Vance2011}. The current best bound is by Venkatesh~\cite{Venkatesh2013}, who proved a general bound $\theta(d)\geq (65963+o_d(1))\,d\cdot 2^{-d}$, and for a sparse sequence  of dimensions $\{d_i\}_{i\in\bN}$, a bound $\theta(d_i)=\Omega(d_i\cdot\log\log d_i\cdot 2^{-d_i})$. (In contrast, the best known lower bound on the kissing number for $d\to\infty$ coming from a lattice is exponentially in $d$ smaller than the easy bound in~\eqref{eq-lower-bound-kissing-number}, see Vl\u{a}du\textcommabelow{t}~\cite{Vladut2019}.)

One can try to find lower bounds by taking a random packing $X$ inside a bounded measurable set $S\subseteq\mathbb R^d$. Jenssen, Joos and Perkins~\cite{JenssenJoosPerkins19} investigated the \emph{hard sphere model of fugacity $\lambda$} where we take the Poisson point process $X$ on $S$ of intensity $\lambda$ conditioned on being a packing, with the radius chosen so that the balls we pack have volume $1$. 
Define the \emph{expected packing density} $\alpha_S(\lambda):=\mathbb E[\,|X|\,]/\vol(S)$ and
observe that, for any $\lambda>0$, the limit superior of $\alpha_S(\lambda)$ when $S\subseteq\mathbb R^d$ is a large ball  is a lower bound on $\theta(d)$. Jenssen, Joos and Perkins~\cite{JenssenJoosPerkins19} were able to prove that $\theta(d)\ge (\log(2/\sqrt{3})+o(1))\, d\cdot 2^{-d}$ via this method (see \cref{thm-bound-exp-packing-density} here). This does not improve the lower bound of Venkatesh~\cite{Venkatesh2013} but the full potential of this approach is unclear.
(In fact, this approach is used in~\cite{JenssenJoosPerkins18} and in this paper to improve the best known lower bounds on spherical codes and kissing numbers, as stated above.)

Our next result improves the lower bound on $\theta(d)$
given by this method by a multiplicative constant of $\log2/\log({4}/{3})+o(1)=2.409...$ as $d\to\infty$.

\begin{theorem}\label{thm-improvement-sph-pack-dens}
	 For every $\eps>0$, there are $\delta>0$ and $d_0$ such that if $d\ge d_0$, $\lambda\ge (1/\sqrt{2}-\delta)^d$ and $S\subseteq\mathbb{R}^d$ is any bounded and measurable set with positive measure, then 
$$
	\alpha_S(\lambda)\geq(\log\sqrt{2}-\eps)\cdot d\cdot 2^{-d}.$$
\end{theorem}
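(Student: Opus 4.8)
The plan is to carry out the occupancy (Gibbs-measure) method in $\bR^d$, paralleling the proof of~\cref{thm:sph-code}. Normalise so that the non-overlap distance is $2r$ and $\vol(B_r(x))=1$, so that $\vol(B_{2r}(x))=2^d$. Applying the Georgii--Nguyen--Zessin (Mecke) equation for the hard sphere model $X$ on $S$ to the constant test function gives the identity
$$
\alpha_S(\lambda)=\lambda\cdot\bar p_S(\lambda),\qquad \bar p_S(\lambda):=\frac{1}{\vol(S)}\,\bE\!\left[\vol\!\left(S\setminus\bigcup_{x\in X}B^{\circ}_{2r}(x)\right)\right],
$$
so that $\bar p_S(\lambda)$ is the expected fraction of $S$ not covered by the exclusion balls of radius $2r$, and it suffices to bound $\bar p_S(\lambda)$ from below. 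Two reductions help. First, $\lambda\mapsto\alpha_S(\lambda)$ is non-decreasing (indeed $\lambda\,\partial_\lambda\alpha_S=\rV(|X|)/\vol(S)\ge0$), so it is enough to prove the claimed bound at the single value $\lambda=(1/\sqrt2-\delta)^d$ and deduce it for all larger $\lambda$. Second, the identity above is exact, and in the local arguments below the contribution of points within distance $2r$ of $\partial S$ is negligible (restrict the base point to the $1-o(1)$ fraction of $S$ far from the boundary, or work on a torus); such boundary terms are absorbed into the $o(1)$.

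\textbf{The key estimate (main obstacle).} The heart of the matter is to show that the uncovered fraction is at least as large as a Poissonian heuristic suggests:
$$
\bar p_S(\lambda)\ \ge\ \exp\!\big(-(1+o(1))\,2^d\,\alpha_S(\lambda)\big)\qquad\text{for }\ \lambda\le (1/\sqrt2)^d,
$$
with $o(1)\to0$ as $d\to\infty$. To attack this, fix a base point $v$, condition on $X':=X\cap(B^{\circ}_{2r}(v))^{c}$, and use the spatial Markov property of the hard sphere model: conditionally, $X\cap B^{\circ}_{2r}(v)$ is a hard sphere model on the \emph{free region} $F=F(v,X'):=B^{\circ}_{2r}(v)\setminus\bigcup_{x\in X'}B^{\circ}_{2r}(x)$, so that $\bP[\,v\text{ addable}\mid X'\,]=Z_F(\lambda)^{-1}$. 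The crude bound $Z_F(\lambda)\le e^{\lambda\vol(F)}$ with Jensen's inequality already gives $\bar p_S(\lambda)\ge e^{-\lambda\,\bE[\vol F]}$, reducing the problem to showing $\lambda\,\bE[\vol F]\le(1+o(1))\,2^d\alpha_S(\lambda)$. This is exactly the delicate step: $\vol F$ can be as large as $2^d\gg 1/\lambda$, so $B_{2r}(v)$ is typically covered with large multiplicity by further exclusion balls and a naive inclusion--exclusion diverges; one must instead exploit that (i) on average only $2^d\alpha_S=O(d)$ points of $X$ lie within distance $2r$ of $v$, pairwise at distance $\ge2r$, and (ii) in high dimensions two exclusion balls whose centres are at distance $\ge2r$ overlap in an exponentially small fraction of their volume (concentration of measure on $B_{2r}$), so inside the small-diameter ball $B_{2r}(v)$ the hard sphere model is strongly sub-Poissonian and $Z_F(\lambda)$ is in fact far below $e^{\lambda\vol F}$. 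Making this quantitative with the correct constant, and in particular checking that the estimate survives for $\lambda$ all the way up to $(1/\sqrt2)^d$, is the source of the improvement over~\cite{JenssenJoosPerkins19} and is the step I expect to be hardest; it is also where the $\sqrt2$ in the statement comes from, via the identity $1/\lambda=2^{d/2}=\vol(B_{2r})/\vol(B_{\sqrt2\,r})$ at the threshold. A mild bootstrap in $\lambda$ (starting from $\alpha_S(0)=0$, using that as long as $\alpha_S$ is polynomially small times $2^{-d}$ the estimate applies and pins $\alpha_S$ down, and verifying this hypothesis persists up to $\lambda=(1/\sqrt2-\delta)^d$) keeps everything consistent. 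Alternatively, one could run the argument of~\cref{thm:sph-code} on $\Sd$ and let $\theta\to0^{+}$, since its constant $\log\frac{\sin\theta}{\sqrt2\sin(\theta/2)}=\log(\sqrt2\cos(\theta/2))$ tends to $\log\sqrt2$; but working directly in $\bR^d$ avoids the spherical-cap volume computations.

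\textbf{Closing the loop.} Granting the key estimate together with $\alpha_S(\lambda)=\lambda\bar p_S(\lambda)$, we obtain $\alpha_S\ge\lambda\exp(-(1+o(1))2^d\alpha_S)$; writing $\beta:=(1+o(1))\,2^d\alpha_S$ and using that $t\mapsto te^{t}$ is increasing, this yields $\beta\ge W\!\big((1+o(1))2^d\lambda\big)=\log(2^d\lambda)-O(\log d)=d\log2+\log\lambda-o(d)$. For $\lambda=(1/\sqrt2-\delta)^d$ we have $\log2+\tfrac1d\log\lambda=\log(\sqrt2-2\delta)$, hence
$$
\alpha_S(\lambda)=\frac{\beta}{(1+o(1))\,2^d}\ \ge\ \big(\log(\sqrt2-2\delta)-o(1)\big)\cdot d\cdot 2^{-d}.
$$
Given $\eps>0$, choose $\delta>0$ small enough that $\log(\sqrt2-2\delta)>\log\sqrt2-\eps/2$, and then $d_0$ large enough that the $o(1)$ above is at most $\eps/2$; this gives $\alpha_S(\lambda)\ge(\log\sqrt2-\eps)\,d\,2^{-d}$ at $\lambda=(1/\sqrt2-\delta)^d$, and monotonicity of $\alpha_S$ in $\lambda$ extends the inequality to all $\lambda\ge(1/\sqrt2-\delta)^d$, completing the proof.
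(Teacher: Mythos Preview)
Your ``key estimate'' $\bar p_S(\lambda)\ge\exp\bigl(-(1+o(1))2^d\alpha_S(\lambda)\bigr)$ is where the entire improvement lives, and you do not prove it; you explicitly flag it as ``the step I expect to be hardest'' and offer only heuristics. The Jensen route you sketch reduces it to $\lambda\,\bE[\vol F]\le(1+o(1))2^d\alpha_S(\lambda)$, which (using the spatial Markov property) is equivalent to $\bE[\lambda\,\vol(\mathbf T)]\le(1+o(1))\,\bE[\alpha_{\mathbf T}(\lambda)\,\vol(\mathbf T)]$, i.e.\ to $\alpha_{\mathbf T}(\lambda)\ge(1-o(1))\lambda$ typically. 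But this last statement is \emph{false} when $\vol(\mathbf T)$ is close to $2^d$: one certainly cannot pack $\lambda\cdot 2^d\approx 2^{d/2}$ centres at pairwise distance $\ge 2r_d$ inside a single ball of radius $2r_d$. So the crude Jensen step already loses too much on the large-$t$ event, and your remark that ``$Z_F(\lambda)$ is in fact far below $e^{\lambda\vol F}$'' is precisely the content that needs a proof with the right constant. The bootstrap-in-$\lambda$ and the $\theta\to 0$ limit of the spherical result are also only gestured at; neither is a substitute for the missing estimate.

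The paper sidesteps this difficulty by \emph{not} trying to prove a single self-referential inequality. Instead it runs two separate lower bounds on $\alpha_S(\lambda)$ and plays them against each other via a dichotomy on $t:=\vol(\mathbf T)$. For the ``small'' regime $t\le k/\lambda$ (with $k:=(\log\sqrt2-\eps/2)d$), the bound $\alpha_S\ge\lambda\,\bE[e^{-\lambda t}]$ already gives at least $\me^{\eps d/3}2^{-d}$ per unit volume of the small set $L$, so either $L$ has tiny measure or we are done. For the complementary ``large'' regime $t\ge k/\lambda\ge(\sqrt2+\delta/3)^d$, the paper directly lower-bounds the expected number of centres in $\mathbf T$ by $(1-o(1))k$ via two new ingredients: a combinatorial lemma (\cref{lem-lower-bd-number-of-centres}) showing $\alpha_{\mathbf T}(\lambda)\,\vol(\mathbf T)\ge(1-\beta)p_kk$ whenever $k\le\lambda t$, and the Riesz rearrangement inequality (\cref{lem-ineq-f,lem-strength-exp-bound-intersec}) showing that a worst case for $p_k$ is when $\mathbf T$ is a ball, from which $p_k=1-o(1)$ follows. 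Feeding this into $\alpha_S\ge 2^{-d}\bE[\alpha_{\mathbf T}(\lambda)\,\vol(\mathbf T)]$ closes the argument. The $\sqrt2$ arises as the threshold at which the rearranged ball $\mathbf T^*$ stops satisfying $p_k=1-o(1)$, not from an a priori bound on $\bE[\vol(\mathbf T)]$.
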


Our approach builds on the work of Jenssen, Joos and Perkins~\cite{JenssenJoosPerkins18,JenssenJoosPerkins19}. As in these papers, our lower bounds are obtained by analysing  the structure of the random packing $X$ around a uniformly chosen random point $v\in S$. In brief, the new ideas that lead to our improvements are lower bounding the expected size of the random configuration around $v$ in a more direct way (via Lemma~\ref{lm:pois-mean-sph}) and using the known re-arrangement 
inequalities for $\Sd$ and $\mathbb R^d$. Since the reader (like us) may find Euclidean geometry more intuitive than the spherical one, we first present a rather detailed proof for sphere packing in the Euclidean space that introduces the same new ideas as the case of spherical codes.


\section{Notation}\label{sec-prelim}

For $n\in\mathbb{N}$, write $[n]:=\{1,\dots,n\}$. If we claim that a result holds e.g.\ for $0<a\ll b,c\ll d$, it means that there exist positive functions $f$ and $g$ such that the result holds as long as $a<f(b,c)$ and $b<g(d)$ and $c<g(d)$. We will not compute these functions explicitly.

We will sometimes use a standard abuse of notation by denoting  all the probability measures that we use as $\bP$, even if they refer to different probability spaces, but it will be clear from the context with respect to which one it is used; and similarly for the expectation, which will be denoted by~$\bE$. 
All the logarithms will be in base $\me$. Moreover, we will use the standard asymptotic notation for non-negative functions $f$ and $g$: $f(d)=O(g)$ means that when $d$ tends to infinity $f(d)/g(d)$ is bounded by a constant independent of $d$; $f(d)=o(g)$ means that when $d$ is large~$f(d)/g(d)$ tends to zero; $f(d)=\Omega(g(d))$ means that, there exist constants $C$ and $d_0$ such that, for every $d>d_0$, $f(d)\geq C\cdot g(d)$; and $f(d)=\Theta(g(d))$ means that, there exist constants $C_1,C_2,d_0$ such that, for every $d>d_0$, it holds that $C_1\cdot g(d)\leq f(d)\leq C_2\cdot g(d)$.

Let $\vol$ denote the Lebesgue measure in $\mathbb{R}^d$.
Let $B^\circ_r(x):=\{y\in \mathbb R^d\mid d(x,y)<r\}$ denote the open radius-$r$ ball centred at $x$ (and recall that $B_r(x)$ denotes the closed ball).
Likewise,
$$
C^\circ_{\theta}(x):=\{y\in \Sd: \langle x,y\rangle>\cos\theta\}
$$
denotes the open spherical cap of angular radius $\theta$ around $x$ in
the sphere $\Sd$.

Also, we will be using, without any further mention, Fubini-Tonelli's Theorem (see e.g.~\cite[Section 2.3]{RealAnSteinShakarchi}) which states that measurable non-negative functions can be integrated in any order of variables and, if we integrate out any subset of variables then the obtained function in the remaining ones is defined almost everywhere and is measurable.



\section{Sphere packing in Euclidean space}
\label{sec-improve-hard-sph}
Before proving Theorem~\ref{thm-improvement-sph-pack-dens} we need to discuss the hard sphere model in some detail.

\subsection{The hard sphere model}\label{sec-hard-sphere-model}

Recall that a (sphere) packing ${X}$ is a subset of $\mathbb{R}^d$ consisting of centres of balls of equal radii with disjoint interiors. Since the density of a packing, as defined in~\eqref{eq:density}, will not change if we scale the whole picture by a constant factor, we assume from now on that the balls associated to a sphere packing $X$ are all of volume one, and we write $r_d$ for the radius of a ball of volume one in~$\mathbb{R}^d$. This will allow us to treat the number $|X|$ of points in $X$ as the total volume of the balls.

Let $S\subseteq \mathbb{R}^d$ be a bounded measurable set.
The \emph{hard sphere model on $S$} is a probability distribution over sphere packings $X\subseteq S$. Before giving a formal definition, we provide some intuition. Consider an infinite graph with vertex set $S$ in which two points are neighbours if they are of distance less than $2r_d$ apart. Then, given a sphere packing $X$, we can think of the the set $X$ of points as an independent set in our graph, and the hard sphere model can be thought of as a continuous version of the so-called \emph{hard core model} that samples independent sets in graphs (some recent results involving the latter can be found in e.g.\ \cite{DavJenPerkRob17,DavJenPerkRob18,PerarnauPerkins18}).

There are two versions of the hard sphere model: the \emph{canonical ensemble}, which is a uniformly chosen random packing of a given fixed density (i.e. the number of balls is fixed); and the \emph{grand canonical ensemble}, which is a random packing with variable density determined by a fugacity parameter $\lambda>0$. More precisely, let
$S\subseteq\mathbb{R}^d$ be a bounded, measurable set of positive measure and let $k$ be a non-negative integer. Define
$$P_k(S):=\{\{x_1,\dots,x_k\}\mid x_1,\dots,x_k\in S,\ \forall\, 1\le i<j\le k\ d(x_i,x_j)\ge 2r_d\}$$
to be the sets of sphere packings of size $k$ with centres inside~$S$. Note that we allow centres arbitrarily close to the boundary of $S$, that is, we do not require that the whole ball stays inside $S$, but only its centre. Then in the 
\emph{canonical hard sphere model on} $S$, we take a $k$-tuple $X_k\in P_k(S)$ uniformly at random (if the measure of $P_k(S)$ is positive). The \emph{partition function} of the canonical ensemble is given by
$$\hat{Z}_S(k):=\frac{1}{k!}\int_{S^k}\mathds{1}_{\mathcal{D}(x_1,\dots, x_k)}\dd x_1\cdots \dd x_k,\quad k\ge 1,$$
where $\mathcal{D}(x_1,\dots,x_k)$ is the event that $d(x_i,x_j)\ge 2r_d$ for every distinct $i,j\in[k]$. We also define $\hat{Z}_S(0):=1$. Note that $\hat Z_S(k)=0$ if the measure of $P_S(k)$ is zero.  Observe that $\hat{Z}_S(k)$ is the measure of the set of the legitimate size-$k$ packings $P_k(S)$ and the probability that the random uniform $k$-tuple $X_k\in S^k$ is in~$P_k(S)$ is
$$\bP[\,X_k\in P_k(S)\,]=\frac{k!}{\mathrm{vol}(S)^k}\,\hat{Z}_S(k).$$
Note that the canonical ensemble is the analogue of the uniform distribution in the family of independent sets of fixed size of a graph.

\smallskip

In the \emph{grand canonical hard sphere model on} a bounded measurable subset $S\subseteq \mathbb R^d$ \emph{at fugacity} $\lambda>0$, a random set $X$ of unordered points is sampled according to a Poisson point process on $S$ of intensity~$\lambda$, conditioned on the event that $d(x,y)\geq 2r_d$ for every distinct $x,y\in X$. Note that we condition on the event of positive measure: for example, the unconditioned Poisson set is empty with probability $\me^{-\lambda\cdot\vol(S)}>0$. We will write $\mu_{S,\lambda}$ for the probability measure of the hard sphere model at fugacity~$\lambda$ on $S$, and we may abbreviate $\mathbb P_{X\sim \mu_{S,\lambda}}$ to $\mathbb P_X$, or even to $\mathbb P$ when the meaning is clear. 

Let us present an equivalent description of the same distribution. 
Define the \emph{partition function}
$$Z_S(\lambda):=\sum_{k=0}^\infty\frac{\lambda^k}{k!}\int_{S^k}\mathds{1}_{\mathcal{D}(x_1,\dots,x_k)}\dd x_1\cdots \dd x_k=\sum_{k=0}^\infty\lambda^k\hat{Z}_S(k).$$
Note that if $S$ is bounded, then $\hat{Z}_S(k)=0$ for large $k$ (so in particular, we do not need to worry about the convergence of the sum). This gives a random set $X\subseteq S$ as follows: first, we choose a non-negative integer $k$ at random with probability proportional to $\lambda^k\hat{Z}_S(k)$, and then we choose a $k$-tuple $X\in P_k(S)$ from the canonical hard sphere model. Let us show that these two distributions are indeed equal.


\begin{lemma}\label{lm:equiv} Let $S\subseteq \mathbb R^d$ be a bounded measurable set of positive measure and let $\lambda>0$. 
	Let~$X$ be the Poisson process of intensity $\lambda$ on $S$ conditioned on being a packing. For an integer $k\ge 0$, let~$E_k$ be the event that $|X|=k$. Then $\mathbb P(E_k)=\lambda^k \hat Z_S(k)/Z_S(\lambda)$. Moreover, if $E_k$ has positive measure then $X$ conditioned on $E_k$ is uniformly distributed in~$P_k(S)$.
\end{lemma}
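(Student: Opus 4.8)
The plan is to reduce everything to the elementary description of a Poisson point process and then match normalising constants. Recall that the Poisson process $Y$ of intensity $\lambda$ on $S$ can be generated by first drawing the number of points $N\sim\mathrm{Poisson}(\lambda\vol(S))$ and then, conditionally on $N=k$, placing $k$ i.i.d.\ uniform points in $S$ (and forgetting the order). Since $\bP(N=k)=\me^{-\lambda\vol(S)}(\lambda\vol(S))^k/k!$ and, given $N=k$, the configuration has law $\vol(S)^{-k}\int_{S^k}(\,\cdot\,)\dd x_1\cdots\dd x_k$, summing over $k$ gives, for every bounded measurable symmetric function $f$ on finite subsets of $S$,
$$
\bE[f(Y)]=\me^{-\lambda\vol(S)}\sum_{k=0}^\infty\frac{\lambda^k}{k!}\int_{S^k}f(\{x_1,\dots,x_k\})\dd x_1\cdots\dd x_k.
$$
First I would take $f=\mathds 1_{\mathcal D}$, where $\mathcal D$ is the (symmetric, measurable) event that the configuration is a packing, to get $\bP(Y\text{ is a packing})=\me^{-\lambda\vol(S)}Z_S(\lambda)$; this is positive because the $k=0$ term contributes $\hat Z_S(0)=1$, so conditioning is legitimate and $\mu_{S,\lambda}$ is well defined.

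Next, fix $k\ge 0$ and a measurable family $A\subseteq P_k(S)$, and apply the formula to $f(\{x_1,\dots,x_k\})=\mathds 1[\{x_1,\dots,x_k\}\in A]\cdot\mathds 1_{\mathcal D}(x_1,\dots,x_k)$. Since this $f$ vanishes on every configuration that is not a packing of exactly $k$ points, only the $k$-th summand survives, giving
$$
\bP\big(Y\in A,\ Y\text{ is a packing}\big)=\me^{-\lambda\vol(S)}\,\frac{\lambda^k}{k!}\int_{S^k}\mathds 1[\{x_1,\dots,x_k\}\in A]\,\mathds 1_{\mathcal D}\dd x_1\cdots\dd x_k.
$$
Specialising to $A=P_k(S)$ and noting that $\mathds 1[\{x_1,\dots,x_k\}\in P_k(S)]=\mathds 1_{\mathcal D}(x_1,\dots,x_k)$ for every tuple (if two coordinates coincide, both sides are $0$), the integral equals $k!\,\hat Z_S(k)$, so $\bP(E_k,\ Y\text{ a packing})=\me^{-\lambda\vol(S)}\lambda^k\hat Z_S(k)$. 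Dividing by $\bP(Y\text{ a packing})=\me^{-\lambda\vol(S)}Z_S(\lambda)$ yields $\bP(E_k)=\lambda^k\hat Z_S(k)/Z_S(\lambda)$, the first assertion.

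For the second part, assume $\hat Z_S(k)>0$ (otherwise $\bP(E_k)=0$ and there is nothing to prove). Dividing the displayed identity for a general $A$ by $\bP(E_k,\ Y\text{ a packing})=\me^{-\lambda\vol(S)}\lambda^k\hat Z_S(k)$ gives
$$
\bP_X(X\in A\mid E_k)=\frac{1}{k!\,\hat Z_S(k)}\int_{S^k}\mathds 1[\{x_1,\dots,x_k\}\in A]\,\mathds 1_{\mathcal D}\dd x_1\cdots\dd x_k=\frac{\vol\{(x_1,\dots,x_k)\in S^k:\ \{x_1,\dots,x_k\}\in A,\ \mathcal D\}}{\vol\{(x_1,\dots,x_k)\in S^k:\ \mathcal D\}}.
$$
The right-hand side is exactly the probability that the canonical hard sphere model on $S$ with $k$ points lies in $A$: that model takes an ordered tuple uniformly on $\{(x_1,\dots,x_k)\in S^k:\mathcal D\}$ and forgets the order, and the tuple-to-set map is $k!$-to-one away from the Lebesgue-null diagonal. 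Hence $X$ conditioned on $E_k$ is distributed uniformly in $P_k(S)$, as claimed. The only genuinely delicate point in all of this is the measure-theoretic bookkeeping — fixing a measurable structure on the space of finite subsets of $S$, checking that $\mathcal D$ and each $P_k(S)$ are measurable, and justifying the symmetrisation factor $1/k!$ in the exponential formula — which is standard for Poisson processes (see, e.g., Last and Penrose, \emph{Lectures on the Poisson Process}); the rest is just the cancellation of the common factor $\me^{-\lambda\vol(S)}$.
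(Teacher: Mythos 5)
Your proposal is correct and follows essentially the same route as the paper: both use the mixed-binomial description of a Poisson process on a bounded region (Poisson number of points, then i.i.d.\ uniforms) to express $\bP(|Y|=k,\ Y\text{ is a packing})=\me^{-\lambda\vol(S)}\lambda^k\hat Z_S(k)$ and then divide by the normalising factor $\me^{-\lambda\vol(S)}Z_S(\lambda)$. Your treatment of the second claim, via a general measurable family $A\subseteq P_k(S)$, simply spells out what the paper compresses into "follows from the uniformity of $Y$ in $S^k$ when conditioned on its size."
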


\begin{proof}
	The probability that the unconditioned Poisson process $Y$ of intensity $\lambda$ on $S$ has exactly $k$ points is
	$$\me^{-\lambda\cdot \vol(S)}\frac{(\lambda\cdot\vol(S))^k}{k!}.$$
	Conditioned on $|Y|=k$,  we have that $Y$ is a uniformly chosen random element of $S^k$. Thus the probability that $Y$ is a packing of size $k$ is
	$$
	p_k:=\frac{\me^{-\lambda\cdot\vol(S)}(\lambda\cdot\vol(S))^k}{k!} 
	\cdot \frac1{\vol(S)^k}\int_{S^k} \mathds 1_{\mathcal D(x_1,\dots,x_k)}\dd x_1\ldots\dd x_k
	=\me^{-\lambda\cdot\vol(S)}\, \lambda^k \hat Z_S(k).$$
	Therefore, the probability that $|X|=k$ is
	$$
	\mathbb P[\,E_k\,]=\frac{p_k}{\sum_{i=0}^\infty p_i}
	=\frac{\lambda^k \hat Z_S(k)}{Z_S(\lambda)},
	$$
	proving the first claim. The second claim follows from the uniformity of $Y$ in $S^k$ when conditioned on its size~$k$.
\end{proof}

One of the main properties of the hard sphere model is the spatial Markov property.  

\begin{lemma}[Spatial Markov Property]\label{lm:SMP} Let $A\subseteq S\subseteq \mathbb R^d$ be bounded measurable sets of positive measure. Let $\lambda>0$ and let $X\sim \mu_{S,\lambda}$.
	Let $Y$ be obtained from $X$ by removing $X\cap A$ and adding the points produced by the hard sphere fugacity-$\lambda$ process on 
	\begin{equation}\label{eq:TA}
	\mathbf{T}_A(X):=\{x\in A:\forall y\in X\setminus A\ \ d(x,y)\ge 2r_d \}.
	\end{equation} 
	Then the distributions of the point processes $X$ and $Y$ are the same.
\end{lemma}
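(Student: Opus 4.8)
The plan is to verify the Spatial Markov Property by computing the joint density of the configuration outside $A$ together with the configuration on $\mathbf{T}_A(X)$, and checking that this matches for both $X$ and $Y$. Working with the grand canonical ensemble directly via its Poisson-process description is cleanest. I would first condition on the trace of $X$ outside $A$: let $W = X \setminus A$ be a fixed packing with centres in $S \setminus A$. The key observation is that, conditioned on $X \setminus A = W$, the points of $X$ lying inside $A$ form a hard sphere process on $A$ at fugacity $\lambda$, further conditioned to avoid the ``forbidden region'' $A \setminus \mathbf{T}_A(W)$ (the set of points of $A$ too close to some point of $W$) and conditioned to be internally a packing. But a Poisson process of intensity $\lambda$ on $A$, conditioned to place no points in $A \setminus \mathbf{T}_A(W)$, is exactly a Poisson process of intensity $\lambda$ on $\mathbf{T}_A(W)$; conditioning further on being a packing gives precisely $\mu_{\mathbf{T}_A(W), \lambda}$. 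Since $\mathbf{T}_A(X) = \mathbf{T}_A(W)$ depends only on $X \setminus A$, this is exactly how $Y$ is constructed, so the conditional law of $Y$ given $Y \setminus A = W$ coincides with that of $X$.

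More concretely, I would make the density computation explicit. For a bounded measurable $U \subseteq \mathbb R^d$, the Poisson process of intensity $\lambda$ on $U$ conditioned on being a packing assigns to a size-$k$ configuration the ``density'' proportional to $\lambda^k \mathds 1_{\mathcal D(x_1,\dots,x_k)}$ with respect to the appropriate reference measure (symmetrised Lebesgue measure on $\bigcup_k U^k / {\sim}$), with normalising constant $Z_U(\lambda)$; this is essentially the content of \cref{lm:equiv}. So for $X \sim \mu_{S,\lambda}$, the density at a packing $\{x_1,\dots,x_n\} \subseteq S$ is $\lambda^n \mathds 1_{\mathcal D} / Z_S(\lambda)$. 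Split such a configuration into its part $W$ in $S \setminus A$ (say of size $m$) and its part $V$ in $A$ (of size $k = n - m$); the packing indicator factorises as $\mathds 1_{\mathcal D(W)} \cdot \mathds 1_{V \subseteq (\mathbf{T}_A(W))^k} \cdot \mathds 1_{\mathcal D(V)}$, using that $\mathbf{T}_A(W)$ is exactly the set of points of $A$ compatible with all of $W$. Integrating out $V$ over $A^k$, for fixed $W$, gives $\sum_k \frac{\lambda^k}{k!}\int_{A^k}\mathds 1_{V \subseteq (\mathbf{T}_A(W))^k}\mathds 1_{\mathcal D(V)} = Z_{\mathbf{T}_A(W)}(\lambda)$, so the marginal density of $W = X \setminus A$ is $\lambda^m \mathds 1_{\mathcal D(W)} Z_{\mathbf{T}_A(W)}(\lambda)/Z_S(\lambda)$, and the conditional density of $V = X \cap A$ given $X \setminus A = W$ is $\lambda^k \mathds 1_{\mathcal D(V)}\mathds 1_{V \subseteq \mathbf{T}_A(W)} / Z_{\mathbf{T}_A(W)}(\lambda)$ — precisely the density of $\mu_{\mathbf{T}_A(W),\lambda}$.

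Now for $Y$: by construction, $Y \setminus A = X \setminus A =: W$, so $Y$ has exactly the same marginal on $S \setminus A$ as $X$; and given $W$, the added points are drawn from $\mu_{\mathbf{T}_A(W),\lambda}$ independently of anything else, which by the previous paragraph is exactly the conditional law of $X \cap A$ given $X \setminus A = W$. Since the two processes agree on their $(S\setminus A)$-marginal and on the conditional $A$-part given that marginal, they are equal in distribution. I should also note that positive-measure caveats (so that all conditionings are well-defined) hold: $A$ and $S$ have positive measure, and $\mathbf{T}_A(W)$ has positive measure for almost every realisation $W$ — or, if it happens to have measure zero, the conditional hard sphere process is just the empty set, which is handled by the $k=0$ term and causes no issue.

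I expect the main obstacle to be purely bookkeeping rather than conceptual: carefully justifying that one may restrict attention to Poisson processes (rather than working with the conditioned-process description throughout), that the forbidden-region conditioning turns a Poisson process on $A$ into a Poisson process on $\mathbf{T}_A(W)$, and that $\mathbf{T}_A(W)$ is a measurable function of $W$ so that all the integrals above make sense and Fubini–Tonelli applies. A minor subtlety worth a sentence is that $\mathbf{T}_A(X)$, as defined in \eqref{eq:TA}, depends on $X$ only through $X \setminus A$, which is what makes the removal-then-resampling operation well-defined and measurable; this is the one place where the precise form of the definition matters.
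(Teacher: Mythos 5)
Your proof is correct and takes essentially the same route as the paper: both arguments hinge on the factorisation of the packing indicator $\mathds 1_{\mathcal D}$ into an ``outside-$A$'' part, a ``confined to $\mathbf{T}_A(W)$'' part, and an ``inside-$A$'' part, followed by integrating out the inside-$A$ coordinates to produce the factor $Z_{\mathbf{T}_A(W)}(\lambda)$. The only difference is organisational: the paper fixes the pair of counts $(k,\ell)$, writes down $\bP[|X\cap A|=k,\,|X\setminus A|=\ell]$ and the corresponding expression for $Y$ as explicit integrals, and observes they coincide term by term (summing over $j$ in the $Y$-expression so the $Z_{\mathbf{T}_A(\mathbf x)}$ factor cancels to $1$); you instead present the same computation as ``marginal of $W=X\setminus A$, then conditional of $X\cap A$ given $W$,'' which is arguably a cleaner way to package the identical algebra. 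Your final caveats (measurability of $\mathbf T_A(W)$ in $W$, degenerate case $s(\mathbf T_A(W))=0$) are exactly the technicalities the paper also quietly handles, so nothing is missing.
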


\begin{proof} 
	Let us show first that for any non-negative integers $k$ and $\ell$ it holds that 
	\begin{equation}\label{eq:star}
		\bP[\,|X\cap A|=k,\,|X\setminus A|=\ell\,]=\bP[\,|Y\cap A|=k,\,|X\setminus A|=\ell\,].
	\end{equation}

	Take any integers $k,\ell\ge 0$. Let $E_{k,\ell}$ denote the event that $|X\cap A|=k$ and $|X\setminus A|=\ell$. Write $\mathbf{x}:=(x_1,\dots,x_\ell)$, $\mathbf{x}':=(x_{\ell+1},\dots,x_{\ell+k})$ and $(\mathbf{x},\mathbf{x}'):=(x_1,\dots,x_\ell,x_{\ell+1},\dots,x_{\ell+k})$.
	We have by \cref{lm:equiv} that
	\begin{eqnarray}
		\mathbb P[\,E_{k,\ell}\,] 
		&=& \frac{1}{Z_S(\lambda)}\frac{\lambda^{k+\ell}}{(k+\ell)!}\cdot 
		\int_{S^{k+\ell}}\mathds{1}_{\mathcal{D}(\mathbf{x},\mathbf{x}')}\mathds{1}_{|\{x_1,\dots,x_{k+\ell}\}\cap A|=k}\dd \mathbf{x} \dd \mathbf{x}'\nonumber\\
		&=& \frac{1}{Z_S(\lambda)}\frac{\lambda^{k+\ell}}{(k+\ell)!}\cdot {k+\ell\choose k} 
		\int_{S^{k+\ell}}\mathds{1}_{\mathcal{D}(\mathbf{x},\mathbf{x}')}\,
		\mathds{1}_{\{\mathbf{x}\in (S\setminus A)^\ell\}}\,
		\mathds{1}_{\{\mathbf{x}'\in A^k\}}\,
		\dd \mathbf{x} \dd \mathbf{x}'\nonumber\\
		&=&\frac{1}{Z_S(\lambda)}\frac{\lambda^{k+\ell}}{k!\ell!}\int_{(S\setminus A)^\ell}\mathds{1}_{\mathcal{D}(\mathbf{x})}\Bigg(\int_{\mathbf{T}_A^k(\mathbf x)}\mathds{1}_{\mathcal{D}(\mathbf{x}')}\dd \mathbf{x}'\Bigg)\dd \mathbf{x}.\label{eq-LHS}
	\end{eqnarray}
	Here, the first equality is trivially true if the probability of $|X|=k+\ell$ is zero; otherwise, it is a consequence of
	$$\mathbb P[\,E_{k,\ell}\,] = \mathbb P[\,|X|=k+\ell]\cdot \mathbb P[\,E_{k,\ell}\,|\,|X|=k+\ell\,].
	$$

	On the other hand, for any $k,\ell,j\ge 0$, let 
	$$\mathrm{P}_{k,\ell,j}:=\bP[\,|X\setminus A|=\ell,\,|Y\cap A|=k, \,|X\cap A|=j\,].
	$$
	Clearly, the right-hand side of~\eqref{eq:star} is $\sum_{j=0}^\infty \mathrm{P}_{k,\ell,j}$. This sum, using similar arguments as before and denoting $\mathbf{x}_j:=(x_{\ell+1},\dots,x_{\ell+j})$ and $\mathbf{y}:=(y_{1},\dots,y_{k})$,
	can be re-written as
	\begin{align}
			\sum_{j=0}^\infty \mathrm{P}_{k,\ell,j}&= \sum_{j=0}^\infty \frac{1}{Z_S(\lambda)}\frac{\lambda^{j+\ell}}{(j+\ell)!}{j+\ell \choose j}\cdot \int_{(S\setminus A)^\ell}\mathds{1}_{\mathcal{D}(\mathbf{x})}\Bigg(\int_{\mathbf{T}_A^j(\mathbf x)}\mathds{1}_{\mathcal{D}(\mathbf{x}_j)}\dd \mathbf{x}_j\Bigg)
			\Bigg(\frac{\lambda^k\hat Z_{\mathbf{T}_A(\mathbf x)}(k)}{Z_{\mathbf{T}_A(\mathbf x)}(\lambda)}
			\Bigg)\dd \mathbf{x}\nonumber\\
			&=\frac{1}{Z_S(\lambda)}\frac{\lambda^{k+\ell}}{k!\ell!} 
			\int_{(S\setminus A)^\ell}\mathds{1}_{\mathcal{D}(\mathbf{x})}
			\Bigg(\sum_{j=0}^\infty \frac{\lambda^j\hat Z_{\mathbf{T}_A(\mathbf x)}(j)}{Z_{\mathbf{T}_A(\mathbf x)}(\lambda)}\Bigg)
			\Bigg(\int_{\mathbf{T}_A^k(\mathbf x)}\mathds{1}_{\mathcal{D}(\mathbf{y})}\dd \mathbf{y}\Bigg)\dd \mathbf{x}\nonumber\\
			&=\frac{1}{Z_S(\lambda)}\frac{\lambda^{k+\ell}}{k!\ell!} 
			\int_{(S\setminus A)^\ell}\mathds{1}_{\mathcal{D}(\mathbf{x})}
			\Bigg(\int_{\mathbf{T}_A^k(\mathbf x)}\mathds{1}_{\mathcal{D}(\mathbf{y})}\dd \mathbf{y}\Bigg)\dd \mathbf{x}.\label{eq:Y}
	\end{align}
	
	For every pair $(k,\ell)$ with the two (equal) probabilities in~\eqref{eq:star} non-zero, if we condition on the event $|X\cap A|=k$ and $|X\setminus A|=\ell$ (resp.\ on the event $|Y\cap A|=k$ and $|Y\setminus A|=\ell$), then each of $X$ and $Y$, as a random point of $S^{k+\ell}$, has the same density function with respect to the Lebesgue measure by the calculations in~\eqref{eq-LHS} and~\eqref{eq:Y}.
 Of course, when we ignore the pairs $(k,\ell)$ where the probabilities in~\eqref{eq:star} are zero, we ignore, by countable additivity, a set of measure 0 (which does not affect our distributions). Thus the distributions of $X$ and $Y$ are the same, as desired.
\end{proof}

In other words, the process of generating $Y$ in Lemma~\ref{lm:SMP} gives a regular conditional distribution of $X$ with respect to the $\sigma$-algebra of $X\setminus A$, that is, the $\sigma$-algebra generated by sets of the form 
$$
\{X\mid \mbox{$X\subseteq S$ is a packing such that } \forall i\in [m]\ |X\cap B_i|=k_i\},
$$
 for integers $m,k_1,\dots,k_m\ge 0$ and measurable subsets $B_1,\dots,B_m$ of $S\setminus A$.
This gives a well-defined meaning to phrases like \emph{``the distribution of $X\cap A$ conditioned on $X\setminus A=\{x_1,\dots,x_\ell\}$''} by which we will mean $\mu_{\mathbf T_{A}(x_1,\dots,x_\ell),\lambda}$, that is, the hard sphere distribution on $\mathbf{T}_{A}(x_1,\dots,x_\ell)$ of the same fugacity~$\lambda$. 

The \emph{expected packing density}, $\alpha_S(\lambda)$, of the (grand canonical) hard sphere model is the expected total volume of the balls in the random packing normalised by the measure of $S$. As we consider balls of unit volume, this is the number of centres in $S$ normalised by the measure of $S$, that is,
	$$\alpha_S(\lambda):=\frac{\bE_{X\sim \mu_{S,\lambda}}[\,|X|\,]}{\mathrm{vol}(S)}.$$

Jenssen, Joos and Perkins in~\cite{JenssenJoosPerkins19} proved the following asymptotic lower bound for the expected packing density.

\begin{theorem}[Theorem 2, \cite{JenssenJoosPerkins19}]\label{thm-bound-exp-packing-density}
	Let $d\to\infty$, let $S\subseteq \mathbb{R}^d$ be a bounded and measurable set of positive measure and let $\lambda \geq 3^{-d/2}$ be arbitrary. Then
	$$\alpha_S(\lambda)\geq (1+o(1))\log\frac{2}{\sqrt{3}}\cdot d\cdot 2^{-d}.$$
\end{theorem}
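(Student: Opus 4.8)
\emph{Proof plan.} I would analyse $X\sim\mu_{S,\lambda}$ in a neighbourhood of a uniformly random point $v\in S$, combining an exact formula for $\bE[|X|]$, the spatial Markov property, and a lower bound on the mean of a hard-sphere process confined to a radius-$2r_d$ ball. First, peeling off one point in the series of \cref{lm:equiv} (equivalently, $\bE[|X|]=\lambda\frac{\dd}{\dd\lambda}\log Z_S(\lambda)$) gives the exact identity $\bE_{X\sim\mu_{S,\lambda}}[\,|X|\,]=\lambda\int_S\bP_X[\,X\cap B^\circ_{2r_d}(v)=\emptyset\,]\dd v$, because $\{X\cap B^\circ_{2r_d}(v)=\emptyset\}$ just says $X$ is a packing inside $S\setminus B^\circ_{2r_d}(v)$, whose probability is $Z_{S\setminus B^\circ_{2r_d}(v)}(\lambda)/Z_S(\lambda)$. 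Writing $v$ for a uniform point of $S$ independent of $X$ and $\bar p:=\bE_v\bP_X[X\cap B^\circ_{2r_d}(v)=\emptyset]$, this reads $\alpha_S(\lambda)=\lambda\,\bar p$. On the other hand every closed $2r_d$-ball has volume $2^d$, so by Fubini
$$\bE_v\bE_X\big[\,|X\cap B^\circ_{2r_d}(v)|\,\big]=\frac1{\vol(S)}\,\bE_X\!\!\sum_{x\in X}\vol\!\big(B^\circ_{2r_d}(x)\cap S\big)\;\le\;2^d\,\alpha_S(\lambda).$$

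\textbf{Localisation and the crux.} Fix $v$ and apply \cref{lm:SMP} with $A:=B^\circ_{2r_d}(v)\cap S$: conditionally on $W:=X\setminus A$, the restriction $X':=X\cap A$ has law $\mu_{\mathbf{T},\lambda}$ on $\mathbf T:=\mathbf{T}_A(W)$; moreover every point of $W$ lies at distance $\ge2r_d$ from $v$ by definition of $A$, so $v\in\mathbf T\subseteq B^\circ_{2r_d}(v)$, and since $A\subseteq B^\circ_{2r_d}(v)$ one has $X\cap B^\circ_{2r_d}(v)=X'$. Hence $\bE[\,|X\cap B^\circ_{2r_d}(v)|\mid W\,]=\bE_{X'}[\,|X'|\,]$ and $\bP[X\cap B^\circ_{2r_d}(v)=\emptyset\mid W]=1/Z_{\mathbf T}(\lambda)$. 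The heart of the argument is the local estimate
$$\bE_v\bE_W\big[\bE_{X'\sim\mu_{\mathbf T,\lambda}}[\,|X'|\,]\big]\;\ge\;(1-o(1))\,\log\tfrac{2}{\sqrt3}\cdot d\cdot\bE_v\bE_W\big[\bP_{X'}[X'\ne\emptyset]\big]\;=\;(1-o(1))\,\log\tfrac{2}{\sqrt3}\cdot d\cdot(1-\bar p),$$
i.e.\ that a ``blocked'' point typically has $\gtrsim d\log(2/\sqrt3)$ blockers near it. To prove this for a ``wide'' $\mathbf T$, I would lower bound $Z_{\mathbf T}(\lambda)$ by counting ordered $k$-tuples at pairwise distance $\ge 2r_d$ inside a thin near-boundary shell of $\mathbf T$, for $k$ up to $\sim d$: rescaled to $\Sd$ these are spherical codes of angle $\tfrac{\pi}{3}+o(1)$, and since $s_d(\pi/3)=\Theta(d^{-1/2}(\sqrt3/2)^d)$ by \eqref{eq:CapVol}, a union bound over the $\binom k2$ pairs gives $\hat Z_{\mathbf T}(k)\ge(1-o(1))\vol(\mathbf T')^k/k!$ for a sub-region $\mathbf T'$ with $\vol(\mathbf T')=\Omega(2^d)$; feeding $\lambda\vol(\mathbf T')\ge\Omega((2/\sqrt3)^d)$ into the series together with the elementary inequality $\bE_{X'}[|X'|]\ge(\log Z_{\mathbf T}(\lambda)-O(1))/\log(\lambda\vol(\mathbf T'))$ — which follows from $\log Z_{\mathbf T}(s)=\int_0^s\bE[|X'_t|]\,t^{-1}\dd t$, monotonicity of $\bE[|X'_t|]$ in $t$, and $\bE[|X'_t|]\le t\,\vol(\mathbf T')$ — then yields $\bE_{X'}[|X'|]\ge(1-o(1))\log\tfrac2{\sqrt3}\,d$ for such $\mathbf T$.

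\textbf{Assembly.} Combining the localisation and the local estimate with $\alpha_S(\lambda)=\lambda\bar p$ and the Fubini bound gives $2^d\lambda\bar p\ge(1-o(1))\log\tfrac2{\sqrt3}\,d\,(1-\bar p)$, hence
$$\alpha_S(\lambda)=\lambda\bar p\;\ge\;\frac{(1-o(1))\log\frac2{\sqrt3}\,d}{\,2^d+(1-o(1))\log\frac2{\sqrt3}\,d/\lambda\,}.$$
Since $\lambda\ge3^{-d/2}$ and $\sqrt3<2$, the correction $\log\tfrac2{\sqrt3}\,d/\lambda\le\log\tfrac2{\sqrt3}\,d\,(\sqrt3/2)^d\,2^d=o(2^d)$ is negligible, so the denominator is $(1+o(1))2^d$ and $\alpha_S(\lambda)\ge(1-o(1))\log\tfrac2{\sqrt3}\cdot d\cdot2^{-d}$, as claimed.

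\textbf{Main obstacle.} The difficulty is entirely in the local estimate, and precisely in the word ``wide''. At fugacity $\lambda\ge3^{-d/2}$ the conditional free region $\mathbf T$ typically has volume far smaller than $\vol(B^\circ_{2r_d}(v))=2^d$ — the doubled balls around $W$ cover $B^\circ_{2r_d}(v)$ with exponentially large average multiplicity — so one cannot simply impose $\vol(\mathbf T)\ge\delta 2^d$; instead one must show that whenever $\bP_{X'}[X'\ne\emptyset]$ is not itself negligible, the surviving set $\mathbf T$ is not concentrated in a narrow cone around $v$ but remains spread out enough for a spherical-code packing to exhibit $\gtrsim d$ mutually compatible ball positions inside it. Pinning the constant down as exactly $\log\tfrac2{\sqrt3}$ — rather than that minus lower-order slack — then requires a careful choice of the shell width and of the threshold separating ``wide'' from ``negligible'', with all the $(1-o(1))$ factors tracked.
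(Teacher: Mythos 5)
The paper does not prove this statement itself — it is quoted from \cite{JenssenJoosPerkins19} — but it supplies all the framework you build on: your identity $\alpha_S(\lambda)=\lambda\bar p$ is \cref{lem-interm-results-pack-dens-JJP2}(i), the Fubini bound is \cref{lem-interm-results-pack-dens-JJP2}(ii), and your concluding algebra (solve for $\bar p$ and use $\lambda\ge 3^{-d/2}$ so that $c/\lambda=o(2^d)$) is correct. Your ``elementary inequality'' route through $\log Z_{\mathbf T}/\log(\lambda\vol\mathbf T)$ is also close in spirit to how \cite{JenssenJoosPerkins19} actually proceed; the paper notes they lower bound $\bE_{X,v}[\log Z_{\mathbf T}]$.

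The gap you flag in the last paragraph is, however, genuine, and in the form you pose the local estimate it is fatal rather than merely technical. The inequality $\bE[|X'|]\ge(1-o(1))\log\frac{2}{\sqrt3}\,d\cdot\bP[X'\ne\emptyset]$ is false pointwise: when $\lambda\vol(\mathbf T)=\Theta(1)$ (i.e.\ $\vol(\mathbf T)\asymp 3^{d/2}\ll 2^d$, a perfectly plausible configuration), one has $Z_{\mathbf T}=\Theta(1)$, hence $\bE[|X'|]=\Theta(1)$ and $\bP[X'\ne\emptyset]=\Theta(1)$, so the ratio is $O(1)$, not $\Theta(d)$. Your shell construction requires $\vol(\mathbf T')=\Omega(2^d)$ and does nothing for such $\mathbf T$, and you give no reason why their contribution to $1-\bar p$ should be negligible. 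The missing ingredient is the complementary inequality $\alpha_S(\lambda)\ge\lambda\,\bE_{X,v}[\,\me^{-\lambda\,t(X,v)}\,]$, obtained from \cref{lem-interm-results-pack-dens-JJP2}(i) together with \cref{lem-interm-results-pack-dens-JJP}(iii), which converts ``many $v$ with small $\mathbf T$'' directly into a large value of $\alpha_S(\lambda)$. The paper's own proof of the stronger \cref{thm-improvement-sph-pack-dens} runs exactly this dichotomy: set $L:=\{v:\vol(\mathbf T(X,v))\le k/\lambda\}$ with $k=\Theta(d)$; if $\bE[\vol(L)]$ is non-negligible the exponential bound already delivers the theorem, and otherwise the volume lower bound $\vol(\mathbf T)\ge k/\lambda$ off $L$ lets \cref{lem-lower-bd-number-of-centres} (a cleaner substitute for your elementary inequality) give $\alpha_{\mathbf T}(\lambda)\vol(\mathbf T)\ge(1-o(1))k$ directly. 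Without a dichotomy of this kind — splitting on the size of $\mathbf T$ \emph{before} averaging, rather than hoping a single averaged inequality absorbs the small-$\mathbf T$ configurations — the plan does not close.
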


Our Theorem~\ref{thm-improvement-sph-pack-dens} gives a constant factor improvement over it (although it applies only to higher values of $\lambda$ than \cref{thm-bound-exp-packing-density}). Both results give a lower bound on the sphere packing density via $\theta(d)\geq\limsup_{n\to\infty} \alpha_{B_n(0)}(\lambda)$ (see e.g.~\cite[Lemma 1]{JenssenJoosPerkins19}).

We need the following auxiliary results from~\cite{JenssenJoosPerkins19} in order to prove~Theorem~\ref{thm-improvement-sph-pack-dens}. For completeness, we include their short proofs.

\begin{lemma}\label{lem-interm-results-pack-dens-JJP}
	Let $S\subseteq\mathbb{R}^d$ be a bounded, measurable set of positive measure, $\lambda>0$ and $X\sim\mu_{S,\lambda}$. Then, the followings hold:
	\begin{itemize}
	 \item [$(\mathrm{i})$] $\alpha_S(\lambda)=\frac{\lambda}{\mathrm{vol}(S)}\,\big(\log Z_S(\lambda)\big)'$;
	 \item[$(\mathrm{ii})$] $\alpha_S(\lambda)$ is a strictly increasing function of $\lambda$;
	\item[$(\mathrm{iii})$] $Z_S(\lambda)\leq \me^{\lambda\cdot\vol(S)}.$ \label{eq:small-t}
	\end{itemize}
\end{lemma}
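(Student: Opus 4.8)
The plan is to handle the three parts separately, all resting on the observation that, since $S$ is bounded, $Z_S(\lambda)=\sum_{k=0}^\infty \lambda^k\hat Z_S(k)$ is a \emph{polynomial} in $\lambda$ with non-negative coefficients and constant term $\hat Z_S(0)=1$; in particular $Z_S(\lambda)>0$ for all $\lambda>0$, and term-by-term differentiation in $\lambda$ is legitimate.

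For part $(\mathrm{i})$, I would differentiate to get $Z_S'(\lambda)=\sum_{k\ge 1}k\lambda^{k-1}\hat Z_S(k)$, so that
$$\lambda\,\big(\log Z_S(\lambda)\big)'=\frac{\lambda Z_S'(\lambda)}{Z_S(\lambda)}=\sum_{k\ge 0}k\cdot\frac{\lambda^k\hat Z_S(k)}{Z_S(\lambda)}.$$
By \cref{lm:equiv} the ratio $\lambda^k\hat Z_S(k)/Z_S(\lambda)$ equals $\bP[\,|X|=k\,]$, so the right-hand side is exactly $\bE[\,|X|\,]$; dividing by $\vol(S)$ gives $(\mathrm{i})$.

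For part $(\mathrm{ii})$, write $N:=|X|$ and $p_k:=\bP[\,N=k\,]=\lambda^k\hat Z_S(k)/Z_S(\lambda)$. Starting from $\bE[N]=\big(\sum_k k\lambda^k\hat Z_S(k)\big)/Z_S(\lambda)$ and applying the quotient rule, the quantity $\lambda\frac{d}{d\lambda}\bE[N]$ simplifies, after collecting terms, to $\bE[N^2]-\bE[N]^2=\rV(N)\ge 0$; combined with $(\mathrm{i})$ this shows $\alpha_S(\lambda)$ is non-decreasing. For \emph{strict} monotonicity, note $\hat Z_S(0)=1>0$ and $\hat Z_S(1)=\vol(S)>0$, so $N$ attains both the values $0$ and $1$ with positive probability; hence $\rV(N)>0$, and since $\lambda>0$ this forces $\frac{d}{d\lambda}\alpha_S(\lambda)>0$ at every $\lambda>0$.

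For part $(\mathrm{iii})$, I would use the trivial bound $\mathds 1_{\mathcal D(x_1,\dots,x_k)}\le 1$ to get $\hat Z_S(k)=\frac1{k!}\int_{S^k}\mathds 1_{\mathcal D(x_1,\dots,x_k)}\dd x_1\cdots\dd x_k\le \vol(S)^k/k!$, and then sum:
$$Z_S(\lambda)=\sum_{k\ge 0}\lambda^k\hat Z_S(k)\le \sum_{k\ge 0}\frac{(\lambda\,\vol(S))^k}{k!}=\me^{\lambda\cdot\vol(S)}.$$
The only mildly delicate point is the variance identity in $(\mathrm{ii})$ together with the check that the law of $N$ is genuinely non-degenerate (guaranteed by $\vol(S)>0$); the rest is routine bookkeeping.
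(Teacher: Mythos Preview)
Your proof is correct and follows essentially the same route as the paper: part~$(\mathrm{i})$ by differentiating the partition function and recognising $\bE[\,|X|\,]$, part~$(\mathrm{ii})$ by showing $\lambda\,\frac{d}{d\lambda}\bE[\,|X|\,]=\rV[\,|X|\,]$, and part~$(\mathrm{iii})$ by the trivial bound $\mathds 1_{\mathcal D}\le 1$. Your explicit justification that $\rV[\,|X|\,]>0$ (via $\hat Z_S(0),\hat Z_S(1)>0$) and your remark that $Z_S$ is a polynomial are nice clarifications that the paper leaves implicit.
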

\begin{proof}
To see $(\mathrm{i})$, we compute:
	\begin{align*}
		\alpha_S(\lambda)&=\frac{1}{\mathrm{vol}(S)}\sum_{k=1}^{\infty} k\cdot\bP[\,|X|=k\,]=\frac{1}{\mathrm{vol}(S)}\sum_{k=1}^{\infty} k\cdot\frac{\lambda^k\hat{Z}_S(k)}{Z_S(\lambda)}\\
		&=\frac{\lambda}{\mathrm{vol}(S)}\,\frac{Z'_S(\lambda)}{Z_S(\lambda)}=\frac{\lambda}{\mathrm{vol}(S)}\,\big(\log Z_S(\lambda)\big)'.
	\end{align*}
	We can get $(\mathrm{ii})$ by differentiating with respect to $\lambda$ the expression given by $(\mathrm{i})$:
	\begin{equation*}
		\begin{split}
			\lambda\cdot\mathrm{vol}(S)\cdot\alpha'_S(\lambda)&=\lambda\cdot\mathrm{vol}(S)\Big(\frac{1}{\mathrm{vol}(S)}\big(\log Z_S(\lambda)\big)'+\frac{\lambda}{\mathrm{vol}(S)}\big(\log Z_S(\lambda)\big)''\Big)\\
			&=\frac{\lambda^2 Z''_S(\lambda)}{Z_S(\lambda)}-
			\left(\frac{\lambda Z'_S(\lambda)}{Z^2_S(\lambda)}\right)^2+\frac{\lambda Z'_S(\lambda)}{Z_S(\lambda)}
			\\
			&=\bE\big[\,|X|\,(|X|-1)\,\big]-\big(\bE[\,|X|\,]\big)^2+\bE[\,|X|\,]=\rV[\,|X|\,]>0,
		\end{split}
	\end{equation*}
 where the identity $\bE\big[\,|X|\,(|X|-1)\,\big]={\lambda^2 Z''_S(\lambda)}/{Z_S(\lambda)}$ can be proved very similarly as the identity in~$(\mathrm{i})$.

To see $(\mathrm{iii})$, note that
$$Z_S(\lambda)=\sum_{k=0}^\infty\lambda^k\hat{Z}_S(k)=\sum_{k=0}^\infty\frac{\lambda^k}{k!}\int_{S^k}\mathds{1}_{\mathcal{D}(x_1,\dots,x_k)}\dd x_1\cdots \dd x_k\leq\sum_{k=0}^\infty\frac{\lambda^k}{k!}\mathrm{vol}(S)^k=\me^{\lambda\cdot\mathrm{vol}(S)}.$$

\end{proof}

\subsection{Externally uncovered neighbourhood $\mathbf{T}$}

Now we do a two-part experiment: let $X$ be a random configuration of centres drawn according to the hard sphere model on $S$ at fugacity $\lambda$ and, independently, choose a point $v$ uniformly at random from $S$. Define the set
\begin{equation}\label{defn:T}
	\mathbf{T}:=\mathbf{T}(X,v)=\{x\in B^\circ_{2r_d}(v)\cap S: \forall y\in X\setminus B^\circ_{2r_d}(v)\ \ d(x,y)\ge 2r_d \},
\end{equation}
which is the set of all points of $S$ in the open ball of radius $2r_d$ around $v$ that are suitable to be a centre of a new ball to add to the packing $X\setminus B^\circ_{2r_d}(v)$ (because they are not blocked by a centre outside $B^\circ_{2r_d}(v)$). See Figure~\ref{fig:setT}. This is the same definition as when we take $A:= B^\circ_{2r_d}(v)\cap S$ in~\cref{eq:TA}. The set~$\mathbf{T}$ is called the set of \emph{externally uncovered points} in the neighbourhood of $v$ (with respect to $X$). Note that $\vol(\mathbf{T})>0$ almost surely.

\begin{figure}[h]
	\begin{center}
\definecolor{cadetgrey}{rgb}{0.57, 0.64, 0.69}
\definecolor{darkgray}{rgb}{0.66, 0.66, 0.66}
	\begin{tikzpicture}
		\draw[black, thick] (0,0) rectangle (7,4);
		\draw[darkgray] (3,1) circle [radius=12pt];
		\draw[darkgray] (5.7,1.3) circle [radius=12pt];
		\draw[darkgray] (3,3.1) circle [radius=12pt];
		\draw[darkgray] (1,3) circle [radius=12pt];
		\draw[darkgray] (5.2,3) circle [radius=12pt];
		
		\draw[darkgray, dashed] (3,1) circle [radius=24pt];
		\draw[darkgray, dashed] (5.7,1.3) circle [radius=24pt];
		\draw[darkgray, dashed] (3,3.1) circle [radius=24pt];
		\draw[white, fill] (3,1) circle [radius=23.7pt];
		\draw[white, fill] (5.7,1.3) circle [radius=23.7pt];
		\draw[darkgray] (3,1) circle [radius=12pt];
		\draw[darkgray] (5.7,1.3) circle [radius=12pt];
		\node[inner sep= 1pt](a1) at (3,1)[circle,fill]{};
		\node[inner sep= 1pt](a1) at (3,3.1)[circle,fill]{};
		\node[inner sep= 1pt](a1) at (5.7,1.3)[circle,fill]{};
		\node[inner sep= 1pt](a2) at (1,3)[circle,fill]{};
		\node[inner sep= 1pt](a1) at (5.2,3)[circle,fill]{};
		\draw[->] (a2) -- (0.6,3);
		\node[inner sep= 1pt](v) at (4.3,1.5)[circle,fill]{};
		\draw[->] (v) -- (3.5,1.5);
		\draw[cadetgrey, dashed] (4.3,1.5) circle [radius=24pt];
		\begin{pgfonlayer}{background}
			\draw[cadetgrey, dashed, fill] (4.3,1.5) circle [radius=23.8pt];
		\end{pgfonlayer}
		\node[inner sep= 1pt](l1) at (5.4,3){\tiny$X$};
		\node[inner sep= 1pt](l1) at (4.5,1.5){\small$v$};
		\node[inner sep= 1pt](l1) at (4,1.65){\tiny$2r_d$};
		\node[inner sep= 1pt](l1) at (0.8,3.1){\tiny$r_d$};
		\node[inner sep= 1pt](l1) at (4.5,2){\small$T$};
	\end{tikzpicture}
	\caption{The set $\mathbf{T}$ of externally uncovered points.\label{fig:setT}}
\end{center}
\end{figure}

We will also need the following two results from~\cite{JenssenJoosPerkins19} that relate this two-part experiment to $\alpha_S(\lambda)$, and we include their proofs for the reader's convenience.

\begin{lemma}\label{lem-interm-results-pack-dens-JJP2}
	Let $S\subseteq\mathbb{R}^d$ be a bounded, measurable set of positive measure, $\lambda>0$, $X\sim\mu_{S,\lambda}$ and let $v\in S$ be a random point chosen uniformly from $S$, independent of $X$. Let $\mathbf{T}=\mathbf{T}(X,v)$ be as in~\cref{defn:T}. Then, the following statements hold:
	\begin{itemize}
		\item[$(\mathrm{i})$] $\alpha_S(\lambda)=\lambda\cdot\bE_{X,v}\Big[\frac{1}{Z_{\mathbf{T}}(\lambda)}\Big]$;
		\item[$(\mathrm{ii})$] $\alpha_S(\lambda)\geq 2^{-d}\cdot\bE_{X,v}\big[\alpha_{\mathbf{T}}(\lambda)\cdot\mathrm{vol}(\mathbf{T})\big].$ \label{eq:large-t}
	\end{itemize}
\end{lemma}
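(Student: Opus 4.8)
The plan is to deduce both identities from the Spatial Markov Property (\cref{lm:SMP}), applied with $A:=B^\circ_{2r_d}(v)\cap S$. For this choice one has $\mathbf T_A(X)=\mathbf T(X,v)=\mathbf T$, so $\mathbf T$ (and hence $Z_{\mathbf T}(\lambda)$ and $\vol(\mathbf T)$) is determined by the pair $\big(v,\,X\setminus B^\circ_{2r_d}(v)\big)$ alone, and conditionally on this pair the restriction $X\cap B^\circ_{2r_d}(v)$ is distributed according to $\mu_{\mathbf T,\lambda}$. Throughout I will use that $\vol(\mathbf T)>0$ almost surely, so that $\mu_{\mathbf T,\lambda}$ is well defined and $Z_{\mathbf T}(\lambda)\ge\hat Z_{\mathbf T}(0)=1$.

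For part $(\mathrm i)$ I would first record two elementary facts. The first is that for any measurable $B\subseteq S$,
\[
\bP_{X\sim\mu_{S,\lambda}}\big[\,X\cap B=\emptyset\,\big]=\frac{Z_{S\setminus B}(\lambda)}{Z_S(\lambda)},
\]
which follows by restricting each integral $\int_{S^k}\mathds 1_{\mathcal D}$ in the series for $Z_S(\lambda)$ to $(S\setminus B)^k$ (equivalently, by decomposing over $|X|=k$ and using \cref{lm:equiv}). The second is that, differentiating $Z_S(\lambda)=\sum_k\lambda^k\hat Z_S(k)$ term by term and peeling off one of the $k$ integration variables (renamed $v$), the factorisation $\mathds 1_{\mathcal D(x_1,\dots,x_{k-1},v)}=\mathds 1_{\mathcal D(x_1,\dots,x_{k-1})}\prod_i\mathds 1_{d(x_i,v)\ge 2r_d}$ confines $x_1,\dots,x_{k-1}$ to $S\setminus B^\circ_{2r_d}(v)$ and yields $Z_S'(\lambda)=\int_S Z_{S\setminus B^\circ_{2r_d}(v)}(\lambda)\dd v$. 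Combining these with \cref{lem-interm-results-pack-dens-JJP}$(\mathrm i)$, namely $\alpha_S(\lambda)=\lambda Z_S'(\lambda)/(\vol(S)Z_S(\lambda))$, gives $\alpha_S(\lambda)=\lambda\,\bE_v\big[\bP_X[\,X\cap B^\circ_{2r_d}(v)=\emptyset\,]\big]$. Finally, for fixed $v$ I would condition on $X\setminus B^\circ_{2r_d}(v)$: by the Spatial Markov Property the conditional probability that $X\cap B^\circ_{2r_d}(v)=\emptyset$ equals $\bP_{Y\sim\mu_{\mathbf T,\lambda}}[Y=\emptyset]=1/Z_{\mathbf T}(\lambda)$ (the case $k=0$ of \cref{lm:equiv}), so $\bP_X[\,X\cap B^\circ_{2r_d}(v)=\emptyset\,]=\bE_X[1/Z_{\mathbf T}(\lambda)]$, and taking expectation over $v$ gives $(\mathrm i)$.

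For part $(\mathrm{ii})$ the starting point is that, by definition of the expected packing density, $\alpha_{\mathbf T}(\lambda)\cdot\vol(\mathbf T)=\bE_{Y\sim\mu_{\mathbf T,\lambda}}[\,|Y|\,]$; applying the Spatial Markov Property and the tower property exactly as above turns the right-hand side into $\bE_{X,v}\big[\,|X\cap B^\circ_{2r_d}(v)|\,\big]$. Now I would integrate over the uniform point $v$ first, using Fubini--Tonelli:
\[
\bE_{X,v}\big[\,|X\cap B^\circ_{2r_d}(v)|\,\big]=\bE_X\Big[\tfrac{1}{\vol(S)}\textstyle\sum_{x\in X}\vol\big(S\cap B^\circ_{2r_d}(x)\big)\Big]\le \bE_X\Big[\tfrac{2^d}{\vol(S)}\,|X|\Big]=2^d\,\alpha_S(\lambda),
\]
where the inequality uses $\vol\big(S\cap B^\circ_{2r_d}(x)\big)\le\vol\big(B_{2r_d}(x)\big)=2^d\vol\big(B_{r_d}(x)\big)=2^d$, the last step being the normalisation $\vol(B_{r_d})=1$. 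Rearranging gives $(\mathrm{ii})$.

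I expect the only delicate point to be the measure-theoretic bookkeeping around the Spatial Markov Property: one must check that $\mathbf T$, $Z_{\mathbf T}(\lambda)$ and $\vol(\mathbf T)$ are genuinely measurable with respect to the $\sigma$-algebra of $X\setminus B^\circ_{2r_d}(v)$ (for fixed $v$), and that the almost-sure positivity $\vol(\mathbf T)>0$ makes the conditioning legitimate, so that phrases like ``$X\cap B^\circ_{2r_d}(v)$ conditioned on $X\setminus B^\circ_{2r_d}(v)$ is $\mu_{\mathbf T,\lambda}$'' are rigorous. Once this is in place, everything else — the coordinate-peeling identity for $Z_S'(\lambda)$, the two applications of the tower property, and the trivial volume bound — is routine.
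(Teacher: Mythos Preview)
Your proposal is correct and follows essentially the same route as the paper's proof: for $(\mathrm i)$ you peel off one coordinate to reduce to the probability that $X$ avoids $B^\circ_{2r_d}(v)$ and then invoke the Spatial Markov Property to rewrite this as $1/Z_{\mathbf T}(\lambda)$, and for $(\mathrm{ii})$ you use the same Markov step together with the bound $\vol(S\cap B^\circ_{2r_d}(x))\le 2^d$. The only cosmetic difference is that you package the coordinate-peeling via the identity $Z_S'(\lambda)=\int_S Z_{S\setminus B^\circ_{2r_d}(v)}(\lambda)\,\mathrm dv$ and \cref{lem-interm-results-pack-dens-JJP}$(\mathrm i)$, whereas the paper expands $\bE[\,|X|\,]=\sum_k (k+1)\bP[|X|=k+1]$ directly; these are the same computation.
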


\begin{proof}
	For $(\mathrm{i})$, recall that $\mathcal{D}(x_0,x_1,\dots, x_k)$ is the event that every two of the points $x_0,\dots,x_k$ are at distance at least $2r_d$. Then, we compute
	\begin{equation*}
		\begin{split}
			\alpha_S(\lambda)&=\frac{\bE[\,|X|\,]}{\mathrm{vol}(S)}=\frac{1}{\mathrm{vol}(S)}\sum_{k=0}^\infty(k+1)\cdot\bP[\,|X|=k+1\,]\\
			&=\frac{1}{\mathrm{vol}(S)Z_S(\lambda)}\sum_{k=0}^\infty\int_{S^{k+1}}\frac{\lambda^{k+1}}{k!}\mathds{1}_{\mathcal{D}(x_0,x_1,\dots, x_k)}\dd x_1\cdots \dd x_k\dd x_0\\
			&=\frac{\lambda}{\mathrm{vol}(S)}\int_S\frac{1}{Z_S(\lambda)}\cdot\bigg(1+\sum_{k=1}^\infty\int_{S^{k}}\frac{\lambda^{k}}{k!}\mathds{1}_{\mathcal{D}(x_0,x_1,\dots, x_k)}\dd x_1\cdots \dd x_k\bigg)\dd x_0\\
			&=\frac{\lambda}{\mathrm{vol}(S)}\int_S\bP_X[\,d(x_0,X)\ge 2r_d\,]\dd x_0\\
			&=\lambda\cdot\bE_{X,v}\big[\,\mathds{1}_{\{\mathbf{T}\cap X=\varnothing\}}\,\big]=\lambda\cdot\bE_{X,v}\Big[\frac{1}{Z_{\mathbf{T}}(\lambda)}\Big],
		\end{split}
	\end{equation*}
	where the last equality follows from Lemma~\ref{lm:SMP}, the spatial Markov property, applied to $A:=B^\circ_{2r_d}(v)\cap S$.
	
	For $(\mathrm{ii})$, as $\mathrm{vol}(S\cap B_{2r_d}(u))\leq 2^d$ for any $u\in S$, we have
	\begin{equation*}
		\alpha_S(\lambda)=\frac{\bE_X[\,|X|\,]}{\mathrm{vol}(S)}\geq2^{-d}\cdot\bE_{X,v}\big[\,|X\cap B^\circ_{2r_d}(v)|\,\big]=2^{-d}\cdot\bE_{X,v}[\,\alpha_{\mathbf{T}}(\lambda)\cdot \mathrm{vol}(\mathbf{T})\,],
	\end{equation*}
	where the last equality follows again from the spatial Markov property.
\end{proof}


\subsection{Local analysis in $\mathbf{T}$}\label{se:local}

As in~\cite{JenssenJoosPerkins19}, the key part of our argument is a local analysis of the number of centres inside the externally uncovered set $\mathbf{T}=\mathbf{T}(X,v)$. However, our proof deviates from~\cite{JenssenJoosPerkins19} from this point on. Roughly speaking, we write our lower bound on $\theta(d)$ in terms of the distribution of $t:=\vol(\mathbf{T}(X,v))$, instead of $\mathbb E_{X,v}[\, \log Z_{\mathbf{T}(X,v)}\,]$ as was done in~\cite{JenssenJoosPerkins19} and observe, using a standard rearrangement inequality, that a worst case for our bound is when $t=(\log \sqrt{2}+o(1))\,d/\lambda$ is constant and $\mathbf{T}(X,v)$ is a ball. Thus our improvement comes by adding new geometrical considerations into the proof.


For the proof we will need some auxiliary results from Real Analysis. We say that a measurable function $f:\mathbb{R}^d\to [0,\infty)$ \emph{vanishes at infinity} if for every $t>0$ the level set $\{x\in\mathbb{R}^d: f(x)> t\}$ has finite Lebesgue measure.
For a measurable bounded set $A\subseteq \mathbb{R}^d$
and a measurable function $f:\mathbb{R}^d\to [0,\infty)$ that vanishes at infinity, their \emph{symmetric rearrangements} are, respectively
$$
A^*:=B_{\vol(A)^{1/d}\cdot r_d}(0),
$$
the ball centred at $0$ of the same measure as $A$, and
$$
f^*(x):=\int_0^\infty \mathds{1}_{\{y:f(y)>t\}^*}(x)\,\mathrm{d} t,\quad x\in\mathbb{R}^d,
$$
the radially decreasing symmetric function with the same measures of the level sets as $f$.
For more details, see e.g.~\cite[Chapter 3.3]{LiebLoss2001}.

For a measurable bounded $T\subseteq \mathbb{R}^d$ define
\begin{equation}\label{defn:f}
f(T):=\int_T \vol(B_{2r_d}(u)\cap T)\,\mathrm{d} u.
\end{equation}
Note that, if $\vol(T)>0$, then $f(T)/\vol(T)$ is the expected measure of the intersection $B_{2r_d}(u)\cap T$ for a random point $u$ uniformly chosen from $T$.

We will make use of the following result, which says that the function $f$ defined above is maximised by a ball of the same measure as $T$.

\begin{lemma}\label{lem-ineq-f} Let $f$ be as in~\cref{defn:f}. For every bounded measurable $T\subseteq \mathbb{R}^d$, we have
	\begin{equation*}
		f(T)\le f(T^*).
	\end{equation*}	
\end{lemma}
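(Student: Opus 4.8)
The plan is to reduce the statement to the Riesz rearrangement inequality. First I would rewrite $f(T)$ as a double integral over $T\times T$ of the indicator that the two points are within distance $2r_d$:
$$
f(T)=\int_T\int_T \mathds 1_{\{d(u,w)<2r_d\}}\dd w\dd u=\int_{\mathbb R^d}\int_{\mathbb R^d}\mathds 1_T(u)\,\mathds 1_T(w)\,g(u-w)\dd w\dd u,
$$
where $g(z):=\mathds 1_{\{\|z\|<2r_d\}}$ is the indicator of the open ball of radius $2r_d$ centred at the origin. (One must be slightly careful about open versus closed balls: since $\vol(B_{2r_d}(u)\cap T)=\vol(B^\circ_{2r_d}(u)\cap T)$ for every $u$, replacing the closed ball by the open one in~\eqref{defn:f} changes nothing, so I may use whichever is convenient.) The point of this rewriting is that $g$ is already a radially decreasing function that is its own symmetric rearrangement, $g=g^*$, and $\mathds 1_T$ vanishes at infinity since $T$ is bounded.

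Next I would invoke the Riesz rearrangement inequality (see e.g.~\cite[Chapter 3.7]{LiebLoss2001}) in the form
$$
\int_{\mathbb R^d}\int_{\mathbb R^d} h_1(u)\,h_2(u-w)\,h_3(w)\dd w\dd u\le \int_{\mathbb R^d}\int_{\mathbb R^d} h_1^*(u)\,h_2^*(u-w)\,h_3^*(w)\dd w\dd u
$$
for non-negative measurable $h_1,h_2,h_3$ vanishing at infinity, applied with $h_1=h_3=\mathds 1_T$ and $h_2=g$. Since $(\mathds 1_T)^*=\mathds 1_{T^*}$ and $g^*=g$, the right-hand side is exactly $\int_{T^*}\int_{T^*}\mathds 1_{\{d(u,w)<2r_d\}}\dd w\dd u=f(T^*)$, which gives $f(T)\le f(T^*)$ as claimed.

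The only genuine subtlety — and the step I would be most careful about — is checking the hypotheses of the Riesz inequality: one needs the functions to be non-negative (clear), measurable (clear, $T$ is measurable), and to vanish at infinity. Here $\mathds 1_T$ vanishes at infinity precisely because $T$ is bounded, and $g=\mathds 1_{B^\circ_{2r_d}(0)}$ vanishes at infinity trivially. One should also confirm that $\vol(T)$ may be $0$, in which case both sides are $0$ and $T^*=\{0\}$ (or $\varnothing$), so the inequality holds trivially; assuming $\vol(T)>0$ otherwise. No integrability issues arise because $T$ is bounded, so all integrals are finite. This is essentially the same standard argument used to prove that $f$ in Lemma~\ref{lem-ineq-f} is maximised by balls, and I would present it in two or three lines once the double-integral rewriting is in place.
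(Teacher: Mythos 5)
Your proof is correct and follows exactly the same route as the paper's: rewrite $f(T)$ as the double integral $\int\int\mathds 1_T(u)\mathds 1_T(w)\,g(u-w)\dd w\dd u$ and apply Riesz's rearrangement inequality with $h_1=h_3=\mathds 1_T$ and $h_2$ the ball indicator (which is its own rearrangement). The open-versus-closed-ball remark and the discussion of the vanishing-at-infinity hypothesis are harmless extras; the paper uses the closed ball, but as you note this makes no difference.
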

\begin{proof}
	This is an immediate consequence of Riesz's rearrangement inequality (for a modern exposition, see e.g.~\cite[Theorem 3.7]{LiebLoss2001}) which states that, for any measurable functions $f,g,h:\mathbb{R}^d\to [0,\infty)$ that vanish at infinity, we have
	\begin{equation}\label{eq-Ifgh}
		I(f,g,h)\le I(f^*,g^*,h^*),
	\end{equation}
	where $I(f,g,h):=\int_{\mathbb{R}^d}\int_{\mathbb{R}^d}f(x)g(x-y)h(y)\mathrm{d} x\mathrm{d} y$. 
	
	Now, let $f:=\mathds{1}_T$ and $h:=\mathds{1}_T$ be the indicator functions of $T$ and let $g(x):=\mathds{1}_{B_{2r_d}(0)}$ be the indicator function of the radius-$2r_d$ ball centred at the origin.
	Then $f^*$ and $h^*$ are the indicator functions of the ball $T^*$ while $g^*=g$. By~\eqref{eq-Ifgh} we have
	$$
	f(T)=I(f,g,h)\le I(f^*,g^*,h^*)=f(T^*),
	$$
	as required.
\end{proof}

With the help of~\cref{lem-ineq-f}, we can get the following strengthening of~\cite[Lemma~10]{JenssenJoosPerkins19} (which gives an upper bound on $\mathbb{E}_\mathbf{u}[\, \vol(B_{2r_d}(\mathbf{u})\cap T)\,]$ independent of $t$).

\begin{lemma}\label{lem-strength-exp-bound-intersec}
	Let $T\subseteq \mathbb{R}^d$ be a bounded measurable set of measure $t\in [2^{d/2},2^{d}]$. Let $\mathbf{u}$ be a random point chosen uniformly from $T$. Then,
	\begin{equation*}
		\mathbb{E}_\mathbf{u}[\, \vol(B_{2r_d}(\mathbf{u})\cap T)\,]\le 2\cdot 2^d\cdot (1-t^{-2/d})^{d/2}.
	\end{equation*}
\end{lemma}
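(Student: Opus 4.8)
The plan is to use the rearrangement inequality \cref{lem-ineq-f} to replace $T$ by a Euclidean ball, then evaluate the quantity exactly for a ball, and finally estimate the resulting one-dimensional integral by a Laplace-type argument. First observe that $\mathbb{E}_{\mathbf u}[\vol(B_{2r_d}(\mathbf u)\cap T)]=f(T)/\vol(T)$ with $f$ as in \eqref{defn:f}. Since $\vol(T^*)=\vol(T)=t$, \cref{lem-ineq-f} gives $f(T)/\vol(T)\le f(T^*)/\vol(T^*)$, so it suffices to bound $f(B_\rho)/\vol(B_\rho)$ where $\rho:=t^{1/d}r_d$; put $\tau:=\rho/r_d=t^{1/d}\in[\sqrt2,2]$.

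\textbf{Exact formula for a ball.} Writing $w=u-y$ and using Fubini, $f(B_\rho)=\int_{B_{2r_d}(0)}\vol(B_\rho(0)\cap B_\rho(w))\dd w$, and the integrand depends only on $\delta:=\|w\|$, being equal to the classical value $2\omega_{d-1}\int_{\delta/2}^{\rho}(\rho^2-s^2)^{(d-1)/2}\dd s$ for the volume of the intersection of two radius-$\rho$ balls whose centres are at distance $\delta$ (here $\omega_k$ denotes the volume of the unit ball in $\mathbb{R}^k$). Passing to polar coordinates in $w$ and interchanging the order of integration yields
$$f(B_\rho)=2^{d+1}\omega_{d-1}\omega_d\int_0^\rho(\rho^2-s^2)^{(d-1)/2}\min(s,r_d)^d\dd s,$$
and dividing by $\vol(B_\rho)=\omega_d\rho^d$, substituting $s=r_d\sigma$, and using $\omega_dr_d^d=1$ gives
$$\frac{f(B_\rho)}{\vol(B_\rho)}=\frac{2^{d+1}}{\tau^d}\cdot\frac{\omega_{d-1}}{\omega_d}\Big(J_1+J_2\Big),\qquad J_1:=\int_0^1(\tau^2-\sigma^2)^{\frac{d-1}2}\sigma^d\dd\sigma,\quad J_2:=\int_1^\tau(\tau^2-\sigma^2)^{\frac{d-1}2}\dd\sigma.$$
Since $2\cdot2^d(1-t^{-2/d})^{d/2}=2^{d+1}\tau^{-d}(\tau^2-1)^{d/2}$, the claim reduces to $\tfrac{\omega_{d-1}}{\omega_d}(J_1+J_2)\le(\tau^2-1)^{d/2}$.

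\textbf{Estimating the integrals.} For $J_2$, substitute $u=(\tau^2-\sigma^2)/(\tau^2-1)$ and bound $\tau^2-(\tau^2-1)u\ge1$ on $[0,1]$ to get $J_2\le(\tau^2-1)^{(d+1)/2}/(d+1)$. For $J_1$, the logarithm $g(\sigma)=\tfrac{d-1}2\log(\tau^2-\sigma^2)+d\log\sigma$ of the integrand satisfies $g'(1)=\tfrac{d\tau^2-2d+1}{\tau^2-1}>0$ (as $\tau^2\ge2$) and $g''(\sigma)=-(d-1)\tfrac{\tau^2+\sigma^2}{(\tau^2-\sigma^2)^2}-\tfrac d{\sigma^2}\le-\tfrac{5d-1}4$ for $\sigma\in(0,1)$ (as $\tau^2\le4$); a second-order Taylor expansion at $\sigma=1$, dropping the non-positive linear term, gives $(\tau^2-\sigma^2)^{(d-1)/2}\sigma^d\le(\tau^2-1)^{(d-1)/2}e^{-\frac{5d-1}8(1-\sigma)^2}$, hence $J_1\le(\tau^2-1)^{(d-1)/2}\sqrt{2\pi/(5d-1)}$. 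Plugging these in, together with $\omega_{d-1}/\omega_d\le\sqrt{(d+1)/(2\pi)}$ (which follows from $\Gamma(x+1)/\Gamma(x+1/2)\le\sqrt{x+1/2}$), the required inequality reduces to the elementary estimate $\sqrt{2\pi/(5d-1)}+(\tau^2-1)/(d+1)\le\sqrt{2\pi(\tau^2-1)/(d+1)}$, which (being concave in $\tau^2-1$, so checkable at the endpoints $\tau^2\in\{2,4\}$) holds for all $\tau\in[\sqrt2,2]$; this completes the argument.

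\textbf{Main obstacle.} The delicate point is controlling $J_1$ uniformly in $\tau$ down to the endpoint $\tau=\sqrt2$. There the first-order decay rate $g'(1)$ of the integrand of $J_1$ is only of order $1$, not of order $d$, so a bound relying only on convexity captures just the linear decay, loses a factor $\sqrt d$, and is too weak (as is the cruder estimate that simply contains the intersection of two balls in a ball). One genuinely needs the quadratic term $g''\le-\Theta(d)$, which forces $J_1$ into a Gaussian profile of width $\Theta(1/\sqrt d)$ — exactly the $1/\sqrt d$ saving that cancels the $\sqrt d$ coming from $\omega_{d-1}/\omega_d$ and makes the absolute constant $2$ in the statement (rather than something growing with $d$) correct.
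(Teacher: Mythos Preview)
Your proof is correct, but the paper's own argument is considerably shorter and more geometric. After the same rearrangement reduction to a ball $T=B_\rho(0)$, the paper does \emph{not} compute the integral. Instead, it symmetrises the double integral via the identity
\[
\bE_{\mathbf u}[\vol(B_{2r_d}(\mathbf u)\cap T)]
=\frac{2}{t}\int_T\int_T \mathds 1_{\{d(u,v)\le 2r_d\}}\,\mathds 1_{\{\|v\|\le\|u\|\}}\dd v\dd u
\le 2\max_{u\in B_\rho(0)}\vol\!\big(B_{2r_d}(u)\cap B_{\|u\|}(0)\big),
\]
and then uses the elementary geometric fact that, for $\|u\|=xr_d$ with $x\ge\sqrt2$, the lens $B_{2r_d}(u)\cap B_{\|u\|}(0)$ is contained in a ball of radius $2r_d\sqrt{1-x^{-2}}$. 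Monotonicity in $x$ gives the bound at $x=t^{1/d}$ immediately.

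The trade-off is clear. The symmetrisation trick replaces an average by a maximum, which sidesteps entirely the ``delicate point'' you discuss in your final paragraph: there is no Laplace analysis, no Gamma-function inequalities, and the case $\tau=\sqrt2$ needs no special care (it collapses to the trivial bound $2^{d/2}$). Your route, by contrast, keeps the exact expression $f(B_\rho)/\vol(B_\rho)$ and estimates the resulting one-dimensional integral; this is heavier but in principle sharper, and the decomposition $J_1+J_2$ together with the second-order bound on $g$ gives precise control near $\sigma=1$ that the paper's containment argument simply throws away. Both lead to the same constant~$2$; the paper's proof is just a two-line shortcut around the integral you evaluated.
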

\begin{proof}
	Note that $\mathbb{E}_u[ \,\vol(B_{2r_d}(\mathbf{u})\cap T)\,]=f(T)/\vol(T)$, and so, by Lemma~\ref{lem-ineq-f}, since $\vol(T)=\vol(T^*)$, it is enough to prove the lemma when $T=B_{\rho}(0)$ is a ball of radius $\rho:=t^{1/d}r_d$. This amounts to estimating a certain integral over $B_{\rho}(0)$ of a radially symmetric function. The following trick from~\cite{JenssenJoosPerkins19} simplifies calculations:
	\begin{equation*}
		\begin{split}
			\bE\big[\,\mathrm{vol}(B_{2r_d}(\mathbf{u})\cap T)\,\big]&=\frac{1}{t}\int_T\Bigg(\int_T \mathds{1}_{\{d(u,v)\leq 2r_d\}}\dd v\Bigg)\dd u\\
			&=\frac{2}{t}\int_T\int_T\mathds{1}_{\{d(u,v)\leq 2r_d\}}\cdot\mathds{1}_{\{\|v\|\leq\|u\|\}}\dd v\dd u\\
			&\leq2\max_{u\in B_\rho(0)}\int_T \mathds{1}_{\{d(u,v)\leq 2r_d\}}\cdot\mathds{1}_{\{\|v\|\leq\|u\|\}}\dd v\\
			&\leq 2\max_{u\in B_{\rho}(0)}\mathrm{vol}\big(B_{2r_d}(u)\cap B_{\|u\|}(0)\big).
		\end{split}
	\end{equation*}
	
	Let $u\in B_{\rho}(0)$ be a point maximising the volume of the intersection $\mathrm{vol}\big(B_{2r_d}(u)\cap B_{\|u\|}(0)\big)$, and let $x$ be such that $x\cdot r_d=\|u\|\le \rho$. If $x\le \sqrt{2}$, then the intersection has volume at most $\vol(B_{\sqrt{2}r_d}(0))\le 2^{d/2}$, which is at most the claimed bound. Suppose then that $x\ge \sqrt{2}$. Then, standard trigonometry shows that the intersection $B_{2r_d}(u)\cap B_{\|u\|}(0)$ is contained in a ball of radius $2\sqrt{1-x^{-2}}\cdot r_d$ centred at $(1-2/x^2)u$. Note that, by definition, $x\le t^{1/d}$. Hence, as $t\le 2^d$, we get that 
	$$
	\bE_{\mathbf u}[ \,\vol(B_{2r_d}(\mathbf{u})\cap T)\,] \le  
	\max\left\{ 2^{d/2},\, 2\cdot \max_{\sqrt{2}\le x\le t^{1/d}} (2\sqrt{1-x^{-2}})^d\right\}=
	2\cdot (2\sqrt{1-t^{-2/d}})^d,
	$$
	as required.
\end{proof}

We can now prove a lower bound for the expected number of centres of spheres in $T$ (which is equal to $\alpha_T(\lambda)\cdot\vol(T)$). This bound will be useful when the volume of $T$ is ``large''.

\begin{lemma}\label{lem-lower-bd-number-of-centres} For every $\beta>0$, there is $k_0$ such that, for every $k\ge k_0$ and every $\lambda, t,d>0$, if $T$ is a bounded measurable subset of $\mathbb{R}^d$ of measure $t$ and $k\in \mathbb{N}$ satisfies $k_0\le k\le \lambda t$, then
	\begin{equation}\label{eq-number-of-centres}
		\alpha_T(\lambda)\cdot\vol(T)\ge (1-\beta)p_kk,	
	\end{equation}
	where $p_i$ is the probability that for uniform independent points $x_1,\dots,x_i\in T$ every two are at distance at least $2r_d$.
\end{lemma}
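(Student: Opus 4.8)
The plan is to turn the claim into a statement about the distribution of $|X|$ and then reduce it, in two steps, to an elementary estimate on partial sums of the exponential series. First I record the basic identities: by definition $\alpha_T(\lambda)\vol(T)=\bE_{X\sim\mu_{T,\lambda}}[\,|X|\,]$, and since
$\hat Z_T(j)=\frac1{j!}\int_{T^j}\mathds{1}_{\mathcal{D}(x_1,\dots,x_j)}\dd x_1\cdots\dd x_j=\frac{t^j}{j!}\,p_j$,
Lemma~\ref{lm:equiv} gives $\bP[\,|X|=j\,]=a_j/Z$, where $a_j:=\mu^jp_j/j!$, $\mu:=\lambda t$ and $Z:=\sum_{j\ge0}a_j$. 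So I must show $\bE[\,|X|\,]=\sum_j j\,a_j/Z\ge(1-\beta)kp_k$. I may assume $\beta\le\tfrac12$ (apply the case $\beta=\tfrac12$ when $\tfrac12<\beta<1$, and note the bound is vacuous for $\beta\ge1$), and $p_k>0$ (otherwise there is nothing to prove). Set $\gamma:=\beta/2$. The only facts about $(p_j)$ I will use are that it is non-increasing and bounded by $1$, so the argument is purely analytic.

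The first reduction is the trivial inequality $\bE[\,|X|\,]\ge(1-\gamma)k\cdot\bP[\,|X|\ge(1-\gamma)k\,]$ (discard the terms with $j<(1-\gamma)k$ and bound $j$ from below by $(1-\gamma)k$ in the rest), which reduces the lemma to showing $\bP[\,|X|\ge(1-\gamma)k\,]\ge\frac{1-\beta}{1-\gamma}p_k$. For the second reduction, write $m:=\lceil(1-\gamma)k\rceil$, so the left side equals $\big(\sum_{j\ge m}a_j\big)/Z$. Using $p_j\ge p_k$ for $m\le j\le k$ one gets $\sum_{j\ge m}a_j\ge p_k\widetilde R$, and using $p_j\le1$ one gets $Z\le\widetilde L+\sum_{j\ge m}a_j$, where
$$
\widetilde L:=\sum_{j<m}\frac{\mu^j}{j!},\qquad \widetilde R:=\sum_{j=m}^{k}\frac{\mu^j}{j!}.
$$
Since $x\mapsto x/(\widetilde L+x)$ is increasing, $\bP[\,|X|\ge(1-\gamma)k\,]\ge p_k\widetilde R/(\widetilde L+p_k\widetilde R)$, and a short computation (using $\tfrac{1-\beta}{1-\gamma}<1$ and $1-\tfrac{1-\beta}{1-\gamma}=\tfrac{\beta-\gamma}{1-\gamma}\ge\gamma$) shows the required bound follows once $\widetilde L\le\tfrac{\beta}{2}\widetilde R$.

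The remaining, and main, task is therefore to prove $\widetilde L\le(\beta/2)\widetilde R$ whenever $\mu\ge k$ and $k\ge k_0(\beta)$. Bounding $\widetilde R$ below by its last term $\mu^k/k!$ gives $\widetilde L/\widetilde R\le\sum_{j<m}T_j$ with $T_j:=\prod_{i=j+1}^k i/\mu$. Since $T_{j-1}=(j/\mu)\,T_j$ and $j/\mu\le(m-1)/k\le1-\gamma$ for $j\le m-1$ (here $\mu\ge k$ and $m-1<(1-\gamma)k$), the $T_j$ decay geometrically, so $\sum_{j<m}T_j\le\gamma^{-1}T_{m-1}$. Finally, $T_{m-1}=\prod_{i=m}^k i/\mu\le\prod_{i=m}^k i/k\le\big((m+k)/2k\big)^{k-m+1}$ by the AM--GM inequality; and since $m\le(1-\gamma)k+1$ one has $(m+k)/2k\le1-\gamma/4$ and $k-m+1\ge\gamma k$ once $k\ge2/\gamma$, whence $\widetilde L/\widetilde R\le\gamma^{-1}(1-\gamma/4)^{\gamma k}\to0$ as $k\to\infty$. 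Taking $k_0=k_0(\beta)$ large enough that $\gamma^{-1}(1-\gamma/4)^{\gamma k_0}\le\beta/2$ and $k_0\ge2/\gamma$ then finishes the proof.

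I expect this last step to be where care is needed. When $\mu$ is only marginally larger than $k$ the partial sums $\widetilde L$ and $\widetilde R$ look comparable in size, so one has to exploit both the geometric decay of the tail $\sum_{j<m}T_j$ and the AM--GM bound on $T_{m-1}$ to see that $\widetilde L/\widetilde R$ is in fact exponentially small in $k$, uniformly over all $\mu\ge k$. That uniformity is precisely what allows $k_0$ to depend on $\beta$ alone; the rest is bookkeeping.
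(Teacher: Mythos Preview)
Your proof is correct and follows essentially the same approach as the paper: both arguments reduce the claim to showing that the partial sum $\sum_{j<m}\mu^j/j!$ is negligible compared to $\sum_{j=m}^k\mu^j/j!$ (with $m=\lceil(1-\gamma)k\rceil$, $\gamma=\beta/2$, $\mu=\lambda t\ge k$), and both establish this via the same elementary estimate comparing each term to $\mu^k/k!$. The only cosmetic difference is the initial reduction---the paper conditions on $|X|\le k$ and bounds the conditional expectation, whereas you use the tail bound $\bE[|X|]\ge m\,\bP[|X|\ge m]$---and the final arithmetic, where the paper uses $\prod_{i=0}^{k-m}(1-i/k)\le \me^{-\sum i/k}$ in place of your AM--GM step.
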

\begin{proof}
	Let $X\sim \mu_{T,\lambda}$ be the random set produced by the hard sphere model of fugacity $\lambda$ on $T$ and let $x:=|X|$ be the number of spheres in this random packing of $T$. Thus, $\alpha_T(\lambda)\cdot\vol(T)=\bE[\,x\,]$ and we want to lower bound the expectation of $x$.
	
	If we take any random variable $Y$ and condition on $Y\le C$ then the expectation can only decrease; indeed, as $\bE[\,Y\,|\,Y\le C\,]\le C$, we have
	\begin{equation*}
		\begin{split}
			\bE[\,Y\,]&\ \ge\ \bE[\,Y\,|\,Y\le C\,]\cdot  \bP[\,Y\le C\,]+ C\cdot \bP[\,Y>C\,]\\
			&\ \geq\ \bE[\,Y\,|\,Y\le C\,]\cdot  \bP[\,Y\le C\,]+\bE[\,Y\,|\,Y\le C\,]\cdot\bP[\,Y>C\,]\ =\ \bE[\,Y\,|\,Y\le C\,].
		\end{split}
	\end{equation*}
	
	Thus it is enough to lower bound $\bE[\,x|x\le k\,]$. Let $\xi:=\lambda t$, $\gamma:=\beta/2$ and $m=\lceil(1-\gamma)k\rceil$. Since $1=p_0\ge p_1\ge\ldots\ge p_k$ and $\hat Z_T(i)=p_i\xi^i /i!$, we have
	\begin{equation*}
		\bE[\,x \,|\, x\le k\,] = \frac{\sum_{i=0}^k ip_i \xi^i/i!}{\sum_{i=0}^k p_i \xi^i/i!}\\
		\ge  \frac{p_k\sum_{i=m}^k i \xi^i/i!}{\sum_{i=0}^{k} \xi^i/i!}.
	\end{equation*}
	Let $N:=\sum_{i=m}^k i \,\xi^i/i!$. Thus $p_kN$ is the numerator of the last fraction. As $\xi^i/i!$ increases for $0\le i\le m-1$, we have
	$$
	\sum_{i=0}^{k} \xi^i/i!=\sum_{i=0}^{m-1}\xi^i/i!+\sum_{i=m}^{k} \xi^i/i!\le m\,\frac{\xi^{m-1}}{(m-1)!}+\frac1{m}  \sum_{i=m}^{k} i\,\xi^i/i! .
	$$ 
	Here, the first term is 
	$$
	m\,\frac{\xi^k}{k!}\frac{ \prod_{i=0}^{k-m} (1-\frac ik)}{(\xi/k)^{k-m+1}}\le m\,\frac{\xi^k}{k!}\, \me^{- \sum_{i=0}^{k-m} \frac ik}\le mN \me^{-\gamma^2 k/2},
	$$
	while the second term is exactly $\frac1m\, N$. Thus,
	$$
	\alpha_T(\lambda)\cdot\vol(T)\ge \bE[\, x \,|\, x\le k\, ]\ge \frac{p_k}{m\,\me^{-\gamma^2 k/2} + 1/m}\ge (1-\beta)p_kk,
	$$
	as required. 
\end{proof}

\subsection{Proof of~\cref{thm-improvement-sph-pack-dens}}
	Given $\eps>0$, choose sufficiently small constants $\beta,\delta$ so that $0<\delta \ll \beta \ll \eps$. Let $d\to\infty$ and take any measurable bounded $S\subseteq \mathbb{R}^d$ of positive measure. By~\cref{lem-interm-results-pack-dens-JJP}, $\alpha_S(\lambda)$ is an increasing function in~$\lambda$. So in order to prove Theorem~\ref{thm-improvement-sph-pack-dens}, it is enough to show that, for $\lambda=(1/\sqrt 2-\delta)^d$, we have
	\begin{equation}\label{eq-goal-improve}
		\alpha_S(\lambda)\ge (\log\sqrt2-\eps)\,d\cdot 2^{-d}.
	\end{equation}
	Let $X\sim \mu_{S,\lambda}$ be the centres of the sampled spheres.
	Take a point $v$ uniformly at random from $S$, independent of $X$. As in \cref{se:local}, let 
	$$
	\mathbf{T}=\mathbf{T}(X,v):=\{x\in B^\circ_{2r_d}(v)\cap S: d(x,y)\ge 2r_d, \, \forall y\in X\setminus B^\circ_{2r_d}(v)\}
	$$ 
	be the externally uncovered set around $v$ and let $t=t(X,v)$ be its measure.
	
	Let $k:=(\log \sqrt2-\eps/2)d$. 
	For $X\subseteq S$, let 
	$$
	L=L(X):=\{u\in S: t(X,u)\le k/\lambda\}.
	$$ 
	Now, note that from  Lemma~\ref{lem-interm-results-pack-dens-JJP}~$(\mathrm{iii})$ and  Lemma~\ref{lem-interm-results-pack-dens-JJP2}~$(\mathrm{i})$, we easily derive the inequality
	\begin{equation*}
		\alpha_S(\lambda)=\lambda\,\bE\Big[\,\frac{1}{Z_{\mathbf{T}}(\lambda)}\,\Big]\geq\lambda\, \bE_X \bE_v \big[\,\me^{-\lambda\cdot t(X,v)}\,\big].
    \end{equation*}	
    Then,
	\begin{align*}
	\alpha_S(\lambda)\cdot\vol(S)&\ge\lambda \,\bE_X \Big[\,\int_{v\in S} \me^{-\lambda\cdot t(X,v)}\dd v\,\Big]\\
	&\ge \lambda\, \bE_X \Big[\,\int_{v\in L} \me^{-\lambda\cdot t(X,v)}\dd v\,\Big]\\
	&\ge \me^{\eps d/3}\, 2^{-d}\, \bE_X [\,\vol(L)\,].
	\end{align*}
	Thus, we may assume that, for example, $\bE_X[\,\vol(L)\,]\le \vol(S)\,\me^{-\eps d/4}$, for otherwise~\eqref{eq-goal-improve} holds. Then, by Markov's inequality,
	\begin{equation}\label{eq-small-prob-t-large}
		\bP_X[\,\vol(L)\ge \vol(S)\,\me^{-\eps d/6}\,]\le \me^{-\eps d/12},
	\end{equation}
	that is, for typical outcome $X$, the measure $t=t(X,v)$ is ``relatively large'' except for a very small set of $v\in S$. 
	
	Take any $X$ with $\vol(L)\le \vol(S)\,\me^{-\eps d/6}$. For every $v\in S\setminus L$, $t=t(X,v)$ is at least $k/\lambda\ge (\sqrt 2+\delta/3)^{d}$ by the definition of $L$ and at most $2^d$ since $\mathbf{T}$ is a subset of $B^\circ_{2r_d}(v)$.  Again, by the definition of $L$, we have $k\le \lambda t$. 
	Thus, Lemma~\ref{lem-lower-bd-number-of-centres} applies and gives that, for every $v\in S\setminus L$, we have $\alpha_{\mathbf{T}}(\lambda)\cdot\vol(\mathbf{T})\ge (1-\beta) p_kk$, where  $p_i$ denotes the probability that, for uniform independent $x_1,\dots,x_i\in \mathbf{T}$, every two are at distance at least $2r_d$.

	\begin{claim}\label{cl:1-pk}
		$p_k\ge 1-\delta$.
	\end{claim}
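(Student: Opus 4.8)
The plan is to bound $1-p_k$ by a union bound over the $\binom{k}{2}$ pairs. Writing $q$ for the probability that two independent uniform points $x_1,x_2\in\mathbf T$ satisfy $d(x_1,x_2)<2r_d$, and using that all pairs are identically distributed, we get $1-p_k\le\binom{k}{2}q\le k^2q$. Since $k=(\log\sqrt2-\eps/2)d=\Theta(d)$, it then suffices to show that $q$ decays exponentially in $d$ at a rate depending only on $\delta$; that will force $k^2q\to0$, hence $1-p_k\le\delta$ once $d$ is large.

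To estimate $q$, I would first write it as an integral over $\mathbf T$: with $t:=\vol(\mathbf T)$,
$$
q=\frac1{t^2}\int_{\mathbf T}\vol\bigl(B^\circ_{2r_d}(u)\cap\mathbf T\bigr)\dd u\ \le\ \frac1t\,\bE_{\mathbf u}\bigl[\,\vol(B_{2r_d}(\mathbf u)\cap\mathbf T)\,\bigr],
$$
where $\mathbf u$ is uniform in $\mathbf T$ and we bounded the open ball by the closed one. Since $v\in S\setminus L$, the definition of $L$ gives $t\ge k/\lambda$, while $t\le2^d$ because $\mathbf T\subseteq B^\circ_{2r_d}(v)$; thus $t\in[2^{d/2},2^d]$, so \cref{lem-strength-exp-bound-intersec} applies and yields $\bE_{\mathbf u}[\vol(B_{2r_d}(\mathbf u)\cap\mathbf T)]\le 2\cdot2^d(1-t^{-2/d})^{d/2}$. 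Substituting $a:=t^{-2/d}$ and using the identity $2^d/t=(4a)^{d/2}$, a short computation turns this into
$$
q\ \le\ 2\,\bigl(4a(1-a)\bigr)^{d/2}.
$$

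The crux is to show that $a=t^{-2/d}$ stays bounded away from $1/2$. From $t\ge k/\lambda=(\log\sqrt2-\eps/2)\,d\cdot(1/\sqrt2-\delta)^{-d}$ one gets $t^{2/d}\ge\bigl((\log\sqrt2-\eps/2)d\bigr)^{2/d}\cdot(1/\sqrt2-\delta)^{-2}$, and since the first factor tends to $1$ while $(1/\sqrt2-\delta)^{-2}>2$, we obtain $t^{2/d}\ge2+\eta$ for some $\eta=\eta(\delta)>0$ and all $d\ge d_0$; hence $a\le1/(2+\eta)$. As $g(a):=4a(1-a)$ is increasing on $(0,1/2)$ with $g(1/2)=1$, this gives $4a(1-a)\le1-c'$ for some $c'=c'(\delta)>0$, so $q\le2(1-c')^{d/2}$ and $1-p_k\le k^2(1-c')^{d/2}\to0$, which is $\le\delta$ for $d$ large enough.

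I expect the only real (and rather mild) difficulty to be the bookkeeping of constants: one has to extract $\eta$, and hence $c'$, as a function of $\delta$ alone, absorbing the $1+o(1)$ coming from $\bigl((\log\sqrt2-\eps/2)d\bigr)^{2/d}$ for $d\ge d_0(\eps,\delta)$. Conceptually, the reason the argument genuinely needs the slack in $\lambda\ge(1/\sqrt2-\delta)^d$ — i.e.\ $\delta>0$ strictly — is that at the borderline $\lambda=(1/\sqrt2)^d$ the set $\mathbf T$ could have measure as small as $2^{d/2}$, making $a=1/2$, $4a(1-a)=1$, and the pair-collision bound vacuous.
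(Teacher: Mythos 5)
Your proof is correct, and it takes a cleaner route than the paper's while relying on the same key inputs. Both arguments estimate the same quantity: the expected fraction of $\mathbf{T}$ covered by $B_{2r_d}(\mathbf u)$ for a uniform $\mathbf u\in\mathbf T$, bounded via \cref{lem-strength-exp-bound-intersec}. (Your $2(4a(1-a))^{d/2}$ with $a=t^{-2/d}$ is exactly the paper's $2g(t)=2(f(\tau))^{-d}$ after the algebraic identity $f(\tau)>1\iff 4a(1-a)<1$.) The divergence is in how this is converted into a bound on $1-p_k$. You apply a direct pairwise union bound: $1-p_k\le\binom k2 q$ with $q$ the two-point collision probability, which Fubini identifies precisely with the expected covered fraction. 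The paper instead samples $x_1,\dots,x_k$ sequentially and uses a good/bad dichotomy: a point is bad either if it covers an atypically large ($\ge t/d^3$) portion of $\mathbf T$ (controlled via Markov's inequality) or if it lands in the forbidden region accumulated from earlier good points (controlled since each good point forbids at most $t/d^3$). This auxiliary thresholding is not needed in your version. Your argument is also quantitatively tighter: it yields $1-p_k\le 2k^2(1-c')^{d/2}$, exponentially small in $d$, whereas the paper's accounting (dominated by the $k\cdot(k/d^3)$ term) only gives $1-p_k=O(1/d)$. Either decay rate suffices for the claim. Your explanation of why the $\delta>0$ slack in $\lambda$ is essential — keeping $a$ bounded away from $1/2$ so that $4a(1-a)$ is bounded away from $1$ — matches the role the paper assigns to $f(\sqrt2+\delta/3)>1$.
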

	\begin{poc}
		Recall that $t\ge (\sqrt 2+\delta/3)^{d}$. Consider the function  $g(t):=(f(\tau))^{-d}$, where $\tau:=t^{1/d}$ and $f(\tau):=\tau/(2\sqrt{1-\tau^{-2}})$. Observe that $f(\sqrt{2})=1$ and $f$ is strictly increasing on $[\sqrt 2,2]$ since its derivative at $x\in (\sqrt 2,2]$ is
		$$
		f'(x)=\frac{1}{2
			\sqrt{1-\frac{1}{x^2}}}-\frac{1}{
			2
			\left(1-\frac{1}{x^2}\right)^{3/2
			} x^2}=\frac{x^2-2}{2
			\sqrt{1-\frac{1}{x^2}}
			\left(x^2-1\right)}>0. $$
		Thus $f(t^{1/d})\ge f(\sqrt2+\delta/3)>1$, which means that $g(t)$ is exponentially small in $d\to\infty$. Note that, by Lemma~\ref{lem-strength-exp-bound-intersec}, $2g(t)$ upper bounds the expected fraction of measure of $\mathbf{T}$ that a ball of radius $2r_d$ centred at a uniformly chosen random point $x\in \mathbf{T}$ covers. 
		
		Let $x_1,\dots,x_k\in \mathbf{T}$ be independent random points chosen according to the uniform distribution on $\mathbf{T}$. Imagine that we sample $x_1$, then $x_2$, and so on.
		Call $x_i$ \emph{bad} if $\vol(B_{2r_d}(x_i)\cap \mathbf{T})\ge t/d^3$ or $x_i$ is within distance $2r_d$ from any of $x_1,\dots,x_{i-1}$; otherwise call $x_i$ \emph{good}. Clearly, if all vertices are good then every two are at distance larger than $2r_d$, so it is enough to show that the probability of at least one $x_i$ being bad is $o(1)$ as $d\to\infty$. 
		
		Thus it is enough to show that, for each $i$, the probability that $x_i$ is the first bad vertex is $o(1/k)$. Indeed, this probability (i.e. that $x_i$ is the first bad vertex) is at most the probability that
		$\vol(B_{2r_d}(x_i)\cap \mathbf{T})\ge t/d^3$, which is exponentially small in $d$ by Markov's inequality, plus
		the probability that it belongs to the forbidden region of the good vertices $x_1,\dots,x_{i-1}$, which is at most $(i-1)/d^3\le k/d^3=o(1/k)$, proving the claim.	
	\end{poc}
	
	Therefore, by Lemma~\ref{lem-lower-bd-number-of-centres}, we have, for every $v\in S\setminus L$, that 
	\begin{equation}\label{eq-num-centres-proof}
		\alpha_{\mathbf{T}}(\lambda)\cdot\vol(\mathbf{T})\geq(1-\beta) p_kk\ge (1-2\beta)k.
	\end{equation}
	Then,~\cref{lem-interm-results-pack-dens-JJP2}~(ii) gives that
	\begin{equation}
		2^d \alpha_S(\lambda)\ge \bE_{X,v}\big[\,\alpha_\mathbf{T}(\lambda)\cdot\vol(\mathbf{T})\,\big]=\frac{1}{\vol(S)}\,\bE_X\left[\,\int _{v\in S} \alpha_{\mathbf{T}}(\lambda)\cdot\vol(\mathbf{T})\dd v\,\right].	
	\end{equation}
	So, we have, by~\eqref{eq-num-centres-proof}, that
	\begin{equation*}
		\begin{split}
			2^d \alpha_S(\lambda)&\ge\frac{1}{\vol(S)}\, \bE_X\left[\,\int _{v\in S\setminus L} \alpha_{\mathbf{T}}(\lambda)\cdot\vol(\mathbf{T})\dd v\,\right]\\
			&\ge (1-\me^{-\eps d/12})(1-\me^{-\eps d/6}) (1-2\beta)k\\
			&\ge (\log\sqrt2-\eps)d,
		\end{split}
	\end{equation*}
	where the second inequality is obtained by taking only $X$ with $\vol(L)\le \vol(S)\,\me^{-\eps d/6}$ and using~\eqref{eq-small-prob-t-large}. This proves~\eqref{eq-goal-improve}, thus finishing the proof of~\cref{thm-improvement-sph-pack-dens}.\medskip


\section{Kissing numbers and spherical codes in high dimensions}\label{sec-improve-kissing}

We devote this section to showing how the same method can be used to improve the lower bound on kissing numbers and spherical codes in $\bR^d$.  For this, we first introduce the so called \emph{hard cap model}.

\subsection{The hard cap model}\label{sec-hard-cap-model}

The \emph{hard cap model} is an analogue of the hard sphere model but, instead of considering sphere packings, we are interested in packing the surface of a unit ball with spherical caps. 

Let $P_k(d,\theta)$ be the set of all spherical codes of size $k$ and angle $\theta$ in dimension $d$ (or, equivalently, the set of centres of non-overlapping spherical caps of angular radius $\theta/2$), that is,
$$
P_k(d,\theta):=\{\{x_1,\dots,x_k\}\subset \Sd:\, \forall i\neq j\ \ \langle x_i,x_j\rangle\leq\cos\theta\}.
$$
Similar to the hard sphere model (see Section~\ref{sec-hard-sphere-model}), the hard cap model is a probability distribution over configurations of non-overlapping, identical spherical caps in $\Sd$, and there are two versions depending on whether the number of spherical caps is given or it is random. 

In the \emph{canonical hard cap model}, we are given a non-negative integer $k$ that is the size of a spherical code in $\Sd$ and a spherical code from~$P_k(d,\theta)$ is sampled uniformly at random (if $P_k(d,\theta)$ has positive measure). The partition function is given by
$$\hat{Z}_d^{\theta}(k):=\frac{1}{k!}\int_{\Sd^k}\mathds{1}_{\mathcal{D}_\theta(x_1,\dots,x_k)}\dd s(x_1)\cdots \dd s(x_k),\mbox{ for $k\ge 1$,} \quad\mbox{and}\quad \hat{Z}_d^{\theta}(0):=1,$$
where $\mathcal{D}_\theta(x_1,\dots,x_k)$ is the event that $\langle x_i,x_j\rangle\leq\cos\theta$, for every $1\leq i<j\le k$, and the integrals over $\Sd$ are with respect to the normalised surface measure $s(\cdot)$. Note that if $X_k=(x_1,\dots,x_k)\in \Sd^k$ is chosen uniformly at random, then
$$\bP[\,X_k\in P_k(d,\theta)\,]=k!\cdot \hat{Z}_d^\theta(k).$$

In the \emph{grand canonical hard cap model at fugacity $\lambda$}, we sample $X$ according to a Poisson point process of intensity $\lambda$ on $\Sd$ conditioned on the event that $\langle x,y\rangle\leq\cos\theta$, for every distinct $x,y\in X$. 
As in Lemma~\ref{lm:equiv}, this distribution can be equivalently described  by the \emph{partition function}
$$
Z_d^\theta(\lambda):=\sum_{k=0}^\infty\lambda^k\hat{Z}_d^\theta(k),
$$
 where we first pick an integer $k$ with probability $\hat{Z}_d^\theta(k)/Z_d^\theta(\lambda)$ and then take $k$ independent uniform points in $\Sd$ conditioned on the minimum angular distance being at least~$\theta$.
 
We shall write $\mu_{d,\lambda}^{\theta}$ for the probability measure of the hard cap model with angle $\theta$ at fugacity~$\lambda$ on $\Sd$, and abbreviate $\bP_{X\sim\mu_{d,\lambda}^{\theta}}$ to $\bP_{X}$ or simply $\bP$ when there is no confusion.



We can similarly define the expected size of a random spherical code to lower bound the kissing number $K(d)$ and the maximum size of spherical codes. More precisely, define
	$$\alpha_d^\theta(\lambda):=\bE_{X\sim \mu_{d,\lambda}^{\theta}}[\,|X|\,],$$
which is a lower bound on $A(d,\theta)$.

We can also define the hard cap model on any measurable set $A\subseteq \Sd$ with partition function $Z_A^\theta(\lambda):=\sum_{k=0}^\infty\lambda^k\hat{Z}_A^\theta(k),$ where $\hat{Z}_A^\theta(k):=\frac{1}{k!}\int_{A^k}\mathds{1}_{\mathcal{D}_\theta(x_1,\dots,x_k)}\dd s(x_1)\ldots \dd s(x_k)$. 
We  write $\alpha_A^\theta(\lambda)$ for the expected size of such a random spherical code sampled on $A$.


Let $q(\theta)$ be the angular radius of the smallest spherical cap that contains the intersection of two spherical caps of angular radius $\theta$ whose centres are at angle $\theta$. A special case of Lemma~\ref{lem-angle-intersec-2caps} below gives that
\begin{equation}\label{defn:q}
q(\theta)=\arcsin \Big(\frac{(1-\cos\theta)\sqrt{1+2\cos\theta}}{\sin\theta}\Big).
\end{equation}

The following lower bound on the average size of a random spherical code was obtained by Jenssen, Joos and Perkins~\cite{JenssenJoosPerkins18}.

\begin{theorem}[Theorem 4, \cite{JenssenJoosPerkins18}]\label{thm:sph-code-JJP}
	Let $\theta\in(0,\pi/2)$ be fixed and $q(\theta)$ be as in~\cref{defn:q}. Then, for $\lambda\geq \frac{1}{d\cdot s_d(q(\theta))}$,
	$$\alpha_d^\theta(\lambda)\geq (1+o(1))\log\frac{\sin\theta}{\sin q(\theta)}\cdot d\cdot s_d(\theta)^{-1}.$$
\end{theorem}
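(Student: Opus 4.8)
The plan is to transport the whole argument of~\cref{sec-improve-hard-sph} from $\bR^d$ to $\Sd$, replacing the open ball $B^\circ_{2r_d}(v)$ by the open cap $C^\circ_\theta(v)$ and Lebesgue measure by the normalised surface measure $s$, and then to split on the size of the externally uncovered neighbourhood exactly as in the proof of~\cref{thm-improvement-sph-pack-dens}. First one records the spherical versions of the preliminary facts, whose proofs are word-for-word the same as in the Euclidean case: (i) a \emph{spatial Markov property} for the hard cap model (as in~\cref{lm:SMP}), so that, conditioned on $X\setminus C^\circ_\theta(v)$, the configuration $X\cap C^\circ_\theta(v)$ is the hard cap model on $\mathbf{T}:=\mathbf{T}(X,v)=\{x\in C^\circ_\theta(v):\langle x,y\rangle\le\cos\theta\text{ for all }y\in X\setminus C^\circ_\theta(v)\}$; (ii) $Z^\theta_A(\lambda)\le\me^{\lambda s(A)}$ and $\lambda\mapsto\alpha^\theta_A(\lambda)$ increasing (as in~\cref{lem-interm-results-pack-dens-JJP}); (iii) the identities $\alpha_d^\theta(\lambda)=\lambda\,\bE_{X,v}\big[\,1/Z^\theta_{\mathbf{T}}(\lambda)\,\big]$ and $\alpha_d^\theta(\lambda)=s_d(\theta)^{-1}\,\bE_{X,v}\big[\,\alpha^\theta_{\mathbf{T}}(\lambda)\,\big]$ (as in~\cref{lem-interm-results-pack-dens-JJP2}; the second is now an equality, since $\Sd$ has no boundary and $\bE_v[\,|X\cap C^\circ_\theta(v)|\,]=|X|\,s_d(\theta)$); and (iv) the greedy lower bound $\alpha^\theta_{\mathbf{T}}(\lambda)\ge(1-\beta)\,p_k\,k$ whenever $k_0(\beta)\le k\le\lambda\,s(\mathbf{T})$, where $p_i$ is the probability that $i$ independent uniform points of $\mathbf{T}$ form a code of angle $\theta$ (the proof of~\cref{lem-lower-bd-number-of-centres} uses only $\hat Z^\theta_{\mathbf{T}}(i)=s(\mathbf{T})^i p_i/i!$ with $1=p_0\ge p_1\ge\cdots$, so it carries over verbatim).

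Now fix $\eps>0$; by (ii) it suffices to prove the bound at $\lambda=\big(d\,s_d(q(\theta))\big)^{-1}$. Set $k:=(1-\eps)\log\frac{\sin\theta}{\sin q(\theta)}\,d$, and for a fixed outcome $X$ let $L=L(X):=\{v:\ s(\mathbf{T}(X,v))\le k/\lambda\}$. From the first identity in (iii) and $Z^\theta_{\mathbf{T}}(\lambda)\le\me^{\lambda s(\mathbf{T})}$ we get
$$\alpha_d^\theta(\lambda)\ \ge\ \lambda\,\bE_X\Big[\int_{L}\me^{-\lambda s(\mathbf{T}(X,v))}\dd s(v)\Big]\ \ge\ \lambda\,\me^{-k}\,\bE_X\big[\,s(L)\,\big],$$
and substituting $\lambda$, $k$ and the asymptotics~\eqref{eq:CapVol} for $s_d(\theta)/s_d(q(\theta))$ shows that the exponents combine so that this already exceeds $\log\frac{\sin\theta}{\sin q(\theta)}\,d\,s_d(\theta)^{-1}$ once $\bE_X[s(L)]\ge\me^{-\eps^2 d}$. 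Hence we may assume $\bE_X[s(L)]<\me^{-\eps^2 d}$, and Markov then gives $\bP_X[\,s(L)\ge\me^{-\eps^2 d/2}\,]\le\me^{-\eps^2 d/2}$: for a typical $X$, $\mathbf{T}(X,v)$ is ``large'' ($s(\mathbf{T})>k/\lambda$) for all but an exponentially small measure of $v$. For such an $X$ and any $v\notin L$ one has $k/\lambda\le s(\mathbf{T})\le s_d(\theta)$ (the last because $\mathbf{T}\subseteq C^\circ_\theta(v)$), so $k\le\lambda\,s(\mathbf{T})$ and (iv) applies; combining this with the second identity in (iii) and the bookkeeping at the end of the proof of~\cref{thm-improvement-sph-pack-dens} reduces everything to showing that $p_k\ge1-o(1)$, uniformly over the admissible $\mathbf{T}$.

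This last step is the crux, and I would argue as in~\cref{cl:1-pk}: sample $x_1,\dots,x_k\in\mathbf{T}$ one at a time, call $x_i$ \emph{bad} if $s(C^\circ_\theta(x_i)\cap\mathbf{T})\ge s(\mathbf{T})/d^3$ or $x_i$ is within angle $\theta$ of an earlier $x_j$, and bound the probability that $x_i$ is the first bad point by $d^3\,\bE_{x\in\mathbf{T}}[s(C^\circ_\theta(x)\cap\mathbf{T})]/s(\mathbf{T})$ (Markov) plus $(i-1)/d^3$; since $k=O(d)$, a union bound over $i\in[k]$ then gives $p_k\ge1-o(1)$, provided $\bE_{x\in\mathbf{T}}[s(C^\circ_\theta(x)\cap\mathbf{T})]\le\me^{-c(\theta)d}\,s(\mathbf{T})$ for some $c(\theta)>0$. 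For the latter I would invoke the spherical rearrangement inequality (the $\Sd$-analogue of the Riesz inequality behind~\cref{lem-ineq-f}, due to Baernstein and Taylor): among measurable sets of fixed measure $t:=s(\mathbf{T})$, the quantity $\int\!\!\int_{\mathbf{T}^2}\mathds 1_{\langle x,y\rangle\ge\cos\theta}\dd s(x)\dd s(y)$ is maximised by the cap $C_\rho(p)$ with $s_d(\rho)=t$, so it is enough to bound $\bE_{x\in C_\rho(p)}[s(C_\theta(x)\cap C_\rho(p))]$. Since $t\ge k/\lambda=(1-\eps)\log\frac{\sin\theta}{\sin q(\theta)}\,d^2\,s_d(q(\theta))$, the asymptotics~\eqref{eq:CapVol} force $\rho>q(\theta)$ for all large $d$; and from~\eqref{defn:q} one has $q(\theta)>\theta/2$ for every $\theta\in(0,\pi/2)$ (it reduces to $\cos\theta>-1/3$), so $\rho-\theta/2\ge q(\theta)-\theta/2>0$ is a fixed positive constant. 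For a uniform $x\in C_\rho(p)$ the angle $\angle(x,p)$ concentrates exponentially near $\rho$, and whenever $\angle(x,p)$ lies within a suitably small fixed constant $\eta>0$ of $\rho$ the lens $C_\theta(x)\cap C_\rho(p)$ is an intersection of two caps whose smallest enclosing cap has radius at most $\theta/2+\eta$ (by~\cref{lem-angle-intersec-2caps}, or simply because its great-circle cross-section has width at most $\theta+\eta$); as $\rho\ge q(\theta)>\theta/2$ we may pick $\eta$ so that $\theta/2+\eta<\rho$, whence $s(C_\theta(x)\cap C_\rho(p))\le s_d(\theta/2+\eta)\le\me^{-c(\theta)d}\,t$, while the complementary event, of probability $\me^{-\Omega(d)}$, contributes at most $\me^{-\Omega(d)}t$. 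This gives the geometric claim, hence $p_k\ge1-o(1)$, hence $\alpha_d^\theta(\lambda)\ge(1-o(1))(1-\beta)(1-\eps)\log\frac{\sin\theta}{\sin q(\theta)}\,d\,s_d(\theta)^{-1}$; letting $\beta,\eps\to0$ finishes the proof. The only genuinely new difficulty relative to~\cref{sec-improve-hard-sph} is this spherical geometry — controlling the average cap–cap intersection uniformly over all admissible $\mathbf{T}$ — which the rearrangement inequality together with $q(\theta)>\theta/2$ resolves.
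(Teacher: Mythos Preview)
First, note that the paper does not itself prove this statement --- it is quoted from~\cite{JenssenJoosPerkins18}. Your argument instead follows the template of the paper's proof of the stronger \cref{thm-improve-kissing}, and the overall structure (the split on $s(\mathbf{T})$, the use of \cref{lem-properties-density-hard-cap}, the greedy lower bound of \cref{lm:pois-mean-sph}, and the reduction via spherical rearrangement to a cap $C_\rho(p)$) is sound and matches that proof closely.

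The gap is in your final geometric estimate. You assert that when $\angle(x,p)$ is within $\eta$ of $\rho$, the lens $C_\theta(x)\cap C_\rho(p)$ lies in a cap of angular radius at most $\theta/2+\eta$, reading this off from a single great-circle cross-section. This is false: the lens is much wider in the transverse direction. Take $\rho=\theta$ and $\angle(x,p)=\theta$; then the smallest enclosing cap of $C_\theta(x)\cap C_\theta(p)$ has radius exactly $q(\theta)$, which you yourself observed is strictly larger than $\theta/2$ (for $\theta=\pi/3$ one gets $q(\theta)\approx 54.7^\circ$ versus $\theta/2=30^\circ$). Width along one geodesic does not bound the circumradius, and \cref{lem-angle-intersec-2caps} does not give $\theta/2+\eta$ either: it yields $\sigma(\tau,\theta)$, and only when one of the two caps has radius equal to the angle $\tau$ between the centres --- which is not the case for $C_\theta(x)\cap C_\rho(p)$ with $\angle(x,p)=\gamma<\rho$.

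The repair is precisely what the paper does in \cref{lm:cap-intersection}: the factor-of-two symmetry trick with the indicator $\mathds 1_{\langle p,v\rangle\ge\langle p,u\rangle}$ replaces $C_\rho(p)$ by $C_\gamma(p)$ with $\gamma=\angle(p,u)$, so that \cref{lem-angle-intersec-2caps} applies directly and gives enclosing radius $\sigma(\gamma,\theta)\le\sigma(\rho,\theta)$. Since in your regime $\rho>q(\theta)>\theta'$ with a fixed gap, \eqref{eq:optimal-radius2} gives $\sin\sigma(\rho,\theta)<\sin\rho$ with a margin bounded away from zero, and the required exponential gain $\bE_{x}[\,s(C_\theta(x)\cap C_\rho(p))\,]\le \me^{-c(\theta)d}\,s_d(\rho)$ follows. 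With this correction your proof goes through.
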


Our improvement 
over~\cref{thm:sph-code-JJP} (on a smaller range of $\lambda$) is given by the following result, which implies~\cref{thm:sph-code}.

\begin{theorem}\label{thm-improve-kissing}
	For every $\eps>0$ there are $\delta>0$ and $d_0$ such that if $d\ge d_0$ then the expected density of a hard cap model with fugacity $\lambda\ge \big(\sqrt{2}\sin\frac{\theta}{2}+\delta\big)^{-d}$ in $ \Sd$ satisfies
	\begin{equation}\label{eq-aim-improve-kissing}
		\alpha_{d}^{\theta}(\lambda)\ge \Big(\log\frac{\sin\theta}{\sqrt{2}\sin\frac{\theta}{2}}-\eps\Big)d\cdot s_d(\theta)^{-1}.
	\end{equation}
	In particular, 
	\cref{thm:sph-code} holds.
\end{theorem}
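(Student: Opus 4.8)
The plan is to mirror, in the spherical setting, the Euclidean argument just given for \cref{thm-improvement-sph-pack-dens}. Since $\alpha_d^\theta(\lambda)$ is increasing in $\lambda$ (the same computation as in \cref{lem-interm-results-pack-dens-JJP}, now on $\Sd$), it suffices to prove \eqref{eq-aim-improve-kissing} for $\lambda=\big(\sqrt2\sin\tfrac\theta2+\delta\big)^{-d}$ with $0<\delta\ll\beta\ll\eps$. First I would set up the spherical analogues of the two-part experiment: sample $X\sim\mu_{d,\lambda}^\theta$ and, independently, a uniform $v\in\Sd$, and let $\mathbf{T}=\mathbf{T}(X,v)\subseteq C_\theta^\circ(v)$ be the externally uncovered cap, i.e. the set of $x\in C_\theta^\circ(v)$ that are compatible (angle $\ge\theta$) with every $y\in X\setminus C_\theta^\circ(v)$. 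The spatial Markov property (same proof as \cref{lm:SMP}) and the identities in \cref{lem-interm-results-pack-dens-JJP2} carry over verbatim with $B^\circ_{2r_d}(v)$ replaced by $C^\circ_\theta(v)$ and Lebesgue measure by $s(\cdot)$: in particular $\alpha_d^\theta(\lambda)=\lambda\,\bE_{X,v}[1/Z_{\mathbf T}^\theta(\lambda)]$ and, using $s(C_\theta(u))=s_d(\theta)$, also $\alpha_d^\theta(\lambda)\ge s_d(\theta)\cdot\bE_{X,v}[\alpha_{\mathbf T}^\theta(\lambda)\cdot s(\mathbf T)]$. Writing $t=s(\mathbf T)$ and using $Z_{\mathbf T}^\theta(\lambda)\le\me^{\lambda t}$, one gets $\alpha_d^\theta(\lambda)\ge\lambda\,\bE_X\bE_v[\me^{-\lambda t(X,v)}]$.

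Next I would split $\Sd$ into the ``small uncovered cap'' set $L=L(X):=\{u: t(X,u)\le k/\lambda\}$ with $k:=(\log\tfrac{\sin\theta}{\sqrt2\sin(\theta/2)}-\eps/2)d$, and the complement. Integrating $\me^{-\lambda t}$ only over $L$ shows that either \eqref{eq-aim-improve-kissing} already holds, or $\bE_X[s(L)]\le\me^{-\eps d/4}$, hence by Markov $\bP_X[s(L)\ge\me^{-\eps d/6}]\le\me^{-\eps d/12}$. For any good $X$ and any $v\notin L$, the cap $\mathbf T$ has measure $t\in[k/\lambda,\,s_d(\theta)]$, and since $k\le\lambda t$ the spherical analogue of \cref{lem-lower-bd-number-of-centres} (whose proof is purely combinatorial in $p_i$ and $\xi=\lambda t$, so it transfers directly) gives $\alpha_{\mathbf T}^\theta(\lambda)\cdot s(\mathbf T)\ge(1-\beta)p_k k$, where $p_i$ is the probability that $i$ uniform independent points of $\mathbf T$ are pairwise at angular distance $\ge\theta$. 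Combining with the inequality $s_d(\theta)\,\alpha_d^\theta(\lambda)\ge\bE_{X,v}[\alpha_{\mathbf T}^\theta(\lambda)s(\mathbf T)]\ge\bE_X[\int_{v\notin L}(1-\beta)p_k k\,\dd s(v)]$ and $s(\Sd\setminus L)\ge1-\me^{-\eps d/6}$ on good $X$ will yield \eqref{eq-aim-improve-kissing}, provided $p_k\ge1-\delta$.

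The heart of the proof is therefore the spherical version of \cref{cl:1-pk}: for $t\ge(\sqrt2\sin\tfrac\theta2+\delta/3)^d\cdot$(appropriate normalisation), with high probability $k$ uniform points in $\mathbf T$ are pairwise at angular distance $\ge\theta$. By the same first-moment/union-bound scheme as in the Euclidean case, it suffices to bound the expected fraction of $s(\mathbf T)$ covered by $C_\theta(x)$ for a uniform random $x\in\mathbf T$, and show this is exponentially small. This is where the spherical geometry enters: $C_\theta(x)\cap\mathbf T$ is, by the definition of $q(\theta)$ in \eqref{defn:q} and Lemma~\ref{lem-angle-intersec-2caps} (referenced but not yet stated in the excerpt), essentially contained in a cap of angular radius related to $q(\theta)$; more precisely one needs a rearrangement inequality on $\Sd$ (the spherical analogue of \cref{lem-ineq-f}, via Riesz-type rearrangement on the sphere, e.g. Baernstein--Taylor) reducing to the case $\mathbf T$ is itself a spherical cap, plus the trigonometric estimate that a cap of measure $t$ intersected with a $C_\theta$-cap centred at a point of that cap has measure at most $\big(\tfrac{\sin q(\theta)\text{-type quantity}}{\sin\theta}\big)^{d}\cdot t$ up to polynomial factors. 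The key numerical fact making the constant $\log\tfrac{\sin\theta}{\sqrt2\sin(\theta/2)}$ appear is that for the \emph{smallest} cap $\mathbf T$ one must handle, of radius $q$ with $\sin q=\sqrt2\sin\tfrac\theta2$, the overlap ratio is exactly $1$ at the threshold — this replaces the role of $x=\sqrt2$ in the Euclidean proof. I expect this geometric/analytic step — correctly formulating and proving the spherical rearrangement inequality and the sharp cap-intersection trigonometry, and checking that the threshold $\sqrt2\sin\tfrac\theta2$ is the right critical radius — to be the main obstacle; the probabilistic scaffolding is a routine transcription of \cref{sec-improve-hard-sph}.
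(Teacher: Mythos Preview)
Your proposal is correct and follows essentially the same approach as the paper: the probabilistic scaffolding (monotonicity in $\lambda$, the two-part experiment, the split via $L(X)$, the combinatorial lemma giving $\alpha_{\mathbf T}^\theta\ge(1-\beta)p_kk$, and Markov on $s(L)$) is identical, and your identification of the geometric core---the spherical Riesz/Baernstein--Taylor rearrangement together with the cap-intersection trigonometry and the critical radius $\theta'$ with $\sin\theta'=\sqrt2\sin(\theta/2)$---matches Lemmas~\ref{lem-angle-intersec-2caps}--\ref{lm:cap-intersection} exactly. One small slip: your first display should read $\alpha_d^\theta(\lambda)\ge s_d(\theta)^{-1}\,\bE_{X,v}[\alpha_{\mathbf T}^\theta(\lambda)\cdot s(\mathbf T)]$ (you corrected this in the later line), and in the paper's convention $\alpha_A^\theta(\lambda)$ is the expected \emph{count} rather than density, so the identity is actually $s_d(\theta)\,\alpha_d^\theta(\lambda)=\bE_{X,v}[\alpha_{\mathbf T}^\theta(\lambda)]$.
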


\subsection{Proof of Theorem~\ref{thm-improve-kissing}}\label{sec-pf-improve-kissing}
Similarly as we did in Section~\ref{sec-improve-hard-sph}, consider a two-part experiment as follows: sample a random spherical code $X\sim\mu_{d,\lambda}^{\theta}$ on $\Sd$ and, independently, choose a point $v$ uniformly at random from $\Sd$. We analogously define the externally uncovered neighbourhood around $v$ as
\begin{equation}\label{defn:T-cap}
\mathbf{T}=\mathbf{T}(X,v):=\{x\in C^\circ_\theta(v):\,\forall y\in X\setminus C^\circ_\theta(v)
 \ \ \langle x,y\rangle\leq\cos\theta\}.
\end{equation}

In the proof of Theorem~\ref{thm-improve-kissing}, we make use of several properties of $\alpha_{d}^{\theta}(\lambda)$ that were proved in~\cite[Lemma~5]{JenssenJoosPerkins18}. We put them here for the reader's convenience; we omit their proofs, as they are analogous to those of Lemmas~\ref{lem-interm-results-pack-dens-JJP} and \ref{lem-interm-results-pack-dens-JJP2}.

\begin{lemma}\label{lem-properties-density-hard-cap}
	Let $\lambda>0,\theta\in(0,\pi/2), X\sim \mu_{d,\lambda}^{\theta}$ and let $v\in\Sd$ be a  point chosen uniformly at random from $\Sd$, independent of $X$. Let $\mathbf{T}$ be as in~\cref{defn:T-cap}. Then, the followings hold:
	\begin{enumerate}
	    \item[$(\mathrm{i})$] $\alpha_d^\theta(\lambda)=\lambda\cdot \big(\log Z_d^\theta(\lambda)\big)'$;
	    \item[$(\mathrm{ii})$] $\alpha_d^\theta(\lambda)$ is strictly increasing in $\lambda$;
	    \item[$(\mathrm{iii})$] $\alpha_d^\theta(\lambda)=\lambda\cdot \bE_{X,v}\Big[\frac{1}{Z_{\mathbf{T}}(\lambda)}\Big];$
		\item[$(\mathrm{iv})$]  $\alpha_d^\theta(\lambda)\geq\lambda\cdot \bE_{X,v} \Big[\me^{-\lambda\cdot s(\mathbf{T})}\Big];$
		\item[$(\mathrm{v})$] $\alpha_d^\theta(\lambda)=\frac{1}{s_d(\theta)}\cdot\bE_{X,v}[\alpha_{\mathbf{T}}^\theta(\lambda)] $.
	\end{enumerate}
\end{lemma}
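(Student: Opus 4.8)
\textbf{Proof plan for Lemma~\ref{lem-properties-density-hard-cap}.}
The plan is to mimic exactly the arguments used for Lemmas~\ref{lem-interm-results-pack-dens-JJP} and~\ref{lem-interm-results-pack-dens-JJP2}, replacing the Lebesgue measure on $\mathbb{R}^d$ by the normalised surface measure $s(\cdot)$ on $\Sd$, the distance condition $d(x_i,x_j)\ge 2r_d$ by the angular condition $\langle x_i,x_j\rangle\le\cos\theta$, and the Euclidean ball $B^\circ_{2r_d}(v)$ by the open cap $C^\circ_\theta(v)$. Since $s(\Sd)=1$, the factor $\mathrm{vol}(S)$ that appeared in the Euclidean normalisation disappears, which accounts for the slightly cleaner form of (i), (iii) here.

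First I would record the hard-cap analogue of the spatial Markov property, Lemma~\ref{lm:SMP}: for a measurable $A\subseteq\Sd$ of positive measure, removing $X\cap A$ from $X\sim\mu_{d,\lambda}^\theta$ and resampling a fugacity-$\lambda$ hard-cap process on $\mathbf{T}_A(X):=\{x\in A:\ \langle x,y\rangle\le\cos\theta\ \forall y\in X\setminus A\}$ leaves the law of $X$ unchanged; the proof is the partition-function computation of Lemma~\ref{lm:SMP} verbatim, with $\hat Z$ replaced by $\hat Z^\theta$. Given this, (i) follows by differentiating $Z_d^\theta(\lambda)=\sum_k\lambda^k\hat Z_d^\theta(k)$ and using $\bP[\,|X|=k\,]=\lambda^k\hat Z_d^\theta(k)/Z_d^\theta(\lambda)$ (the hard-cap version of Lemma~\ref{lm:equiv}), exactly as in Lemma~\ref{lem-interm-results-pack-dens-JJP}(i); and (ii) follows by differentiating the identity in (i) a second time and recognising the result as $\rV[\,|X|\,]>0$, just as in Lemma~\ref{lem-interm-results-pack-dens-JJP}(ii). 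For (iii), one expands $\alpha_d^\theta(\lambda)=\sum_k (k+1)\bP[\,|X|=k+1\,]$, peels off one integration variable $x_0\in\Sd$ as in the proof of Lemma~\ref{lem-interm-results-pack-dens-JJP2}(i), identifies the inner sum as $\bP_X[\,\langle x_0,y\rangle\le\cos\theta\ \forall y\in X\,]$, and then applies the spatial Markov property with $A:=C^\circ_\theta(v)$ to rewrite this as $\bE_{X,v}[\,\mathds{1}_{\{\mathbf{T}\cap X=\varnothing\}}\,]=\bE_{X,v}[\,1/Z_{\mathbf{T}}^\theta(\lambda)\,]$, using that emptiness of a fugacity-$\lambda$ Poisson-conditioned process on $\mathbf{T}$ has probability $1/Z_{\mathbf{T}}^\theta(\lambda)$. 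Item (iv) is then immediate from (iii) together with the bound $Z_{\mathbf{T}}^\theta(\lambda)\le\me^{\lambda\cdot s(\mathbf{T})}$, which is the hard-cap analogue of Lemma~\ref{lem-interm-results-pack-dens-JJP}(iii), proved by the same term-by-term estimate $\hat Z_{\mathbf{T}}^\theta(k)\le s(\mathbf{T})^k/k!$. Finally, for (v) I would write $\alpha_d^\theta(\lambda)=\bE_X[\,|X|\,]$ and, for each realisation of $X$, average the count over the choice of $v$: by the spatial Markov property the conditional law of $X\cap C^\circ_\theta(v)$ given $X\setminus C^\circ_\theta(v)$ is $\mu_{\mathbf{T},\lambda}^\theta$, so $\bE[\,|X\cap C^\circ_\theta(v)|\,]=\bE_{X,v}[\,\alpha_{\mathbf{T}}^\theta(\lambda)\,]$; since $v$ is uniform on $\Sd$ and each point of $X$ lies in $C^\circ_\theta(v)$ precisely when $v\in C^\circ_\theta(x)$, Fubini gives $\bE_v[\,|X\cap C^\circ_\theta(v)|\,]=s_d(\theta)\cdot|X|$, and combining the two displays yields $\alpha_d^\theta(\lambda)=s_d(\theta)^{-1}\bE_{X,v}[\,\alpha_{\mathbf{T}}^\theta(\lambda)\,]$. (Note that here the cap $C^\circ_\theta(v)$ itself always has measure exactly $s_d(\theta)$, so no inequality is needed in (v), unlike the Euclidean (ii) of Lemma~\ref{lem-interm-results-pack-dens-JJP2} where $\mathrm{vol}(S\cap B_{2r_d}(u))\le 2^d$ only.)

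The only genuine content beyond transcription is the hard-cap spatial Markov property, so that is where I expect the main (though still routine) work to lie: one must check that all the partition functions involved are finite — which holds because $\Sd$ has finite measure and $\hat Z_A^\theta(k)\le s(A)^k/k!$ — and that the measurability and Fubini–Tonelli manipulations go through on $\Sd^k$ with the product surface measure exactly as on $(\mathbb{R}^d)^k$. Everything else is a direct translation of the Euclidean proofs. Since these facts are established in~\cite[Lemma~5]{JenssenJoosPerkins18}, it is legitimate simply to cite that reference and omit the details, which is what the paper does.
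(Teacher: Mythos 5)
Your proposal is correct and follows exactly the approach the paper indicates: it omits the proof as ``analogous to those of Lemmas~\ref{lem-interm-results-pack-dens-JJP} and \ref{lem-interm-results-pack-dens-JJP2}'' (deferring to \cite[Lemma~5]{JenssenJoosPerkins18}), and you have carried out precisely that analogy, including the correct observation that on $\Sd$ the bound $\vol(S\cap B_{2r_d}(u))\le 2^d$ of the Euclidean (ii) becomes the exact identity $s(C^\circ_\theta(v))=s_d(\theta)$, turning the inequality into the equality asserted in~(v).
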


The next ingredient we need is the analogous result to Lemma~\ref{lem-strength-exp-bound-intersec}. To state it, we need some preliminaries. For $\theta\in(0,\pi/2)$, define $\theta'\in (0,\pi/2)$ by
\begin{equation}\label{eq:optimal-radius}
	\sin\theta'=\sqrt{2}\,\sin\frac{\theta}{2}.
\end{equation}
 Note that $\sin \theta=2\sin(\theta/2)\sqrt{1-\sin^2(\theta/2)}\ge \sqrt2\,\sin \theta' \, \sqrt{1-\sin^2(\pi/4)}\ge \sin\theta'$ and thus $\theta'\le \theta$.


\begin{lemma}\label{lem-angle-intersec-2caps}
Let $\theta\in(0,\pi/2)$ and $\tau\in[\theta',\theta]$, where $\theta'$ is as in~\cref{eq:optimal-radius}. Let $x,u\in \Sd$ be two points with an angle of~$\tau$ apart. Then, the intersection of the two caps $C_{\tau}(x)\cap C_{\theta}(u)$ is contained in a spherical cap of angular radius
\begin{equation}\label{eq:cap-inter-angl-radius}
	\sigma(\tau,\theta):=\arcsin\left(\frac{\sqrt{1+2\cos^2\tau\cos\theta-2\cos^2\tau-\cos^2\theta}}{\sin\tau}\right).
\end{equation}
Moreover, 
$\sigma(\tau,\theta)$ is increasing in $\tau$.
\end{lemma}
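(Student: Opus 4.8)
The plan is to reduce the statement to a concrete two-dimensional trigonometric computation. First I would set up coordinates on the great circle (two-dimensional sphere) through $x$ and $u$: placing $x$ and $u$ at angular distance $\tau$, every point $y$ in the intersection $C_\tau(x)\cap C_\theta(u)$ satisfies $\langle x,y\rangle\ge\cos\tau$ and $\langle u,y\rangle\ge\cos\theta$. The key geometric observation is that the intersection of two spherical caps (a ``lune''-type region) is contained in the smallest cap whose boundary passes through the two points where the boundary circles $\partial C_\tau(x)$ and $\partial C_\theta(u)$ meet. So I would first locate those two intersection points, note they are symmetric about the great circle through $x$ and $u$, and then observe that the smallest enclosing cap is centred on that great circle with the two intersection points on its boundary; its angular radius is exactly the (common) angular distance from that centre to either intersection point.

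The main computation is therefore: find a unit vector $w$ on the great circle through $x,u$ and the angle $\sigma$ such that the two points $p_\pm$ solving $\langle x,p\rangle=\cos\tau$, $\langle u,p\rangle=\cos\theta$, $\|p\|=1$ both satisfy $\langle w,p\rangle=\cos\sigma$. Writing $p_\pm = a x + b u \pm c\, n$ where $n$ is a unit normal to the $(x,u)$-plane, the conditions $\langle x,p\rangle=\cos\tau$ and $\langle u,p\rangle=\cos\theta$ give a $2\times2$ linear system for $a,b$ (with Gram matrix $\begin{pmatrix}1&\cos\tau\\\cos\tau&1\end{pmatrix}$), and $\|p\|=1$ determines $c$. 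Then taking $w$ in the direction of $ax+bu$ (the projection of $p_\pm$ onto the $(x,u)$-plane), one gets $\cos\sigma = \sqrt{a^2+2ab\cos\tau+b^2} = \sqrt{1-c^2}$, so $\sin\sigma = |c|$. Solving the linear system, $a = \frac{\cos\tau-\cos\tau\cos\theta}{1-\cos^2\tau}$ and $b = \frac{\cos\theta-\cos^2\tau}{1-\cos^2\tau}$ (I would double-check these), and then $c^2 = 1 - (a^2+2ab\cos\tau+b^2)$. Grinding through this and simplifying should yield
\[
\sin^2\sigma = c^2 = \frac{1+2\cos^2\tau\cos\theta-2\cos^2\tau-\cos^2\theta}{\sin^2\tau},
\]
which is precisely~\eqref{eq:cap-inter-angl-radius}. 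I should also check the quantity under the square root is nonnegative for $\tau\in[\theta',\theta]$; at $\tau=\theta$ it reduces to $(1-\cos\theta)^2(1+2\cos\theta)/\sin^2\theta$, recovering~\eqref{defn:q}, and the constraint $\tau\ge\theta'$ is exactly what keeps the caps overlapping enough that the enclosing-cap bound is valid (and, in particular, that $\sigma\le\theta$, or at least that $\sin\sigma$ is well-defined and the intersection is genuinely contained in that cap rather than the complementary one).

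For the monotonicity claim I would treat $\tau\mapsto\sin^2\sigma(\tau,\theta)$ as a function of $c:=\cos^2\tau\in[\cos^2\theta,\cos^2\theta']$ and differentiate the rational expression $\frac{1+2c\cos\theta-2c-\cos^2\theta}{1-c} = \frac{(1-c)(\text{stuff})}{1-c}$ — better, rewrite numerator as $-(1-c)(1-2\cos\theta) + (1-\cos^2\theta) - (1-c)$ or simply compute $\frac{d}{dc}$ directly: the derivative of $\frac{N(c)}{1-c}$ with $N(c)=1+2c\cos\theta-2c-\cos^2\theta$ is $\frac{N'(c)(1-c)+N(c)}{(1-c)^2}$, and one checks the numerator has a fixed sign on the relevant interval. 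Since $\cos^2\tau$ is decreasing in $\tau$, the composition gives monotonicity in $\tau$; I'd pick the sign so that $\sigma$ comes out increasing, consistent with $\sigma(\theta',\theta)$ being small and $\sigma(\theta,\theta)=q(\theta)$.

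The main obstacle I anticipate is purely the bookkeeping in the reduction: being careful that ``the intersection of two caps is contained in the cap through the two boundary-circle intersection points'' actually holds in the spherical setting with the correct orientation — on the sphere one must rule out the degenerate/complementary configurations, which is exactly where the hypothesis $\tau\ge\theta'$ (equivalently $\sin\tau\ge\sqrt2\sin(\theta/2)$) enters. Once that geometric fact is nailed down, the rest is a deterministic trigonometric simplification that, while lengthy, presents no conceptual difficulty; I would verify it at the special value $\tau=\theta$ against~\eqref{defn:q} as a sanity check and leave the intermediate algebra to a routine computation.
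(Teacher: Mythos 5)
Your proposal is correct and follows essentially the same route the paper takes: the paper delegates the first claim to the explicit trigonometric computation in~\cite[Lemma 6, Eq.\ (10)]{JenssenJoosPerkins18}, which is precisely the $p_\pm=ax+bu\pm c\,n$ calculation you reconstruct, and then checks monotonicity by a direct derivative, which your substitution $c=\cos^2\tau$ (reducing to $\tfrac{d}{dc}\bigl(N(c)/(1-c)\bigr)$ having numerator $-(1-\cos\theta)^2<0$) reproduces in a marginally cleaner form. The one step you flag as needing care, the containment $C_\tau(x)\cap C_\theta(u)\subseteq C_{\sigma}(w)$ with $w=(ax+bu)/\lVert ax+bu\rVert$, closes exactly as you suspect: one has $a=\cos\tau(1-\cos\theta)/\sin^2\tau>0$ always and $b=(\cos\theta-\cos^2\tau)/\sin^2\tau\ge 0$ precisely when $\tau\ge\theta'$, so for any $y$ with $\langle x,y\rangle\ge\cos\tau$ and $\langle u,y\rangle\ge\cos\theta$ one gets $\langle w,y\rangle=\bigl(a\langle x,y\rangle+b\langle u,y\rangle\bigr)/\lVert ax+bu\rVert\ge(a\cos\tau+b\cos\theta)/\cos\sigma=\cos\sigma$.
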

\begin{proof}
The first claim
 was established in the proof of~\cite[Lemma 6]{JenssenJoosPerkins18}, namely see~\cite[Equation~(10)]{JenssenJoosPerkins18}. Observe that the expression under the square root in~\cref{eq:cap-inter-angl-radius} is
$$
1+2\cos^2\tau\cos\theta-2\cos^2\tau-\cos^2\theta=(1-\cos\theta)(1+\cos\theta-2\cos^2\tau);
$$
 this is non-negative, which follows from $\tau\in[\theta',\theta]$ and the choice of $\theta'$.

For the second part, 
write $\sigma(\tau,\theta)=\arcsin(f_\theta(\tau))$, where 
$$
f_\theta(\tau):=\frac{\sqrt{1+2\cos^2\tau\cos\theta-2\cos^2\tau-\cos^2\theta}}{\sin\tau}.
$$
We need to show that the partial derivative of $\sigma(\tau,\theta)$ with respect to $\tau$ is positive, for which it suffices to prove that $f'_\theta(\tau)> 0$. One can check that
\begin{equation*}
        f'_\theta(\tau)
        =\frac{\cos\tau\cdot(1-\cos\theta)^2}{\sin^2\tau\sqrt{(1-\cos\theta)(1+\cos\theta-2\cos^2\tau)}}.
\end{equation*}
The numerator above is clearly positive as $\tau,\theta\in(0,\pi/2)$.
\end{proof}

Note that, by~\cref{eq:cap-inter-angl-radius}, $\sin^2\tau\cdot (\sin^2\tau-\sin^2\sigma(\tau,\theta))$ simplifies to $(\cos^2\tau-\cos\theta)^2= (\sin^2\tau - 2\sin^2\frac{\theta}{2})^2$ and so
\begin{equation}\label{eq:optimal-radius2}
	\sin\sigma(\tau,\theta)\le \sin\tau, \quad \text{ with equality if and only if $\tau=\theta'$}.
\end{equation}
 
We also need an analogous result to~\cref{lem-ineq-f}. It can be derived using spherical rearrangements and the analogue of Riesz's rearrangement inequality for spheres proved by Baernstein~\cite[Theorem 2]{BaernsteinTaylor76dmj}.

\begin{lemma}\label{lem:sph-rearrange} Let $T$ be a bounded measurable set in $\Sd$ and let $f(T):=\int_{T}s(C_{\theta}(u)\cap T)\,\mathrm{d}u$. Let $T^*$ be a spherical cap with the same measure as $T$. Then, 
	\begin{equation*}
		f(T)\le f(T^*).
	\end{equation*}	
\end{lemma}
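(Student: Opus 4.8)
The plan is to mirror the Euclidean argument of Lemma~\ref{lem-ineq-f} but on the sphere, invoking Baernstein's spherical analogue of Riesz's rearrangement inequality in place of the classical one. First I would recall the precise statement we need: for a reference point (say the north pole) $p\in\Sd$, and measurable nonnegative functions $f,g,h$ on $\Sd$, if one defines the spherical symmetrization $f^\sharp$ of $f$ as the nonincreasing (in geodesic distance from $p$) radially symmetric rearrangement with the same distribution, and if moreover $g$ is itself a radially symmetric nonincreasing function of the geodesic distance, then
\begin{equation*}
\int_{\Sd}\int_{\Sd} f(x)\,g(x,y)\,h(y)\,\mathrm{d}s(x)\,\mathrm{d}s(y)\le \int_{\Sd}\int_{\Sd} f^\sharp(x)\,g(x,y)\,h^\sharp(y)\,\mathrm{d}s(x)\,\mathrm{d}s(y),
\end{equation*}
where $g(x,y)$ depends only on the inner product $\langle x,y\rangle$. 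This is exactly \cite[Theorem 2]{BaernsteinTaylor76dmj}; the hypothesis that the kernel $g$ is already symmetric and nonincreasing is what lets us conclude (we do not need the full three-function symmetrization, only symmetrizing $f$ and $h$).

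Next I would apply this with $f:=\mathds{1}_T$, $h:=\mathds{1}_T$, and $g(x,y):=\mathds{1}_{\{\langle x,y\rangle\ge\cos\theta\}}$, which is precisely the indicator that $y\in C_\theta(x)$. The kernel $g$ is a nonincreasing function of the geodesic distance between $x$ and $y$, hence it is its own spherical rearrangement and satisfies the hypothesis. The rearrangements $f^\sharp$ and $h^\sharp$ are both the indicator of the spherical cap $T^*$ centred at $p$ with $s(T^*)=s(T)$; this uses that a spherical cap is the sublevel set (in geodesic distance) realizing the extremal rearrangement of a set, so $(\mathds{1}_T)^\sharp=\mathds{1}_{T^*}$ by the equimeasurability of rearrangements. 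Then the left-hand side of the inequality is $\int_{\Sd}\int_{\Sd}\mathds{1}_T(x)\,\mathds{1}_{C_\theta(x)}(y)\,\mathds{1}_T(y)\,\mathrm{d}s(y)\,\mathrm{d}s(x)=\int_T s(C_\theta(x)\cap T)\,\mathrm{d}x=f(T)$, and the right-hand side is likewise $\int_{T^*}s(C_\theta(x)\cap T^*)\,\mathrm{d}x=f(T^*)$, giving $f(T)\le f(T^*)$.

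The main obstacle is bookkeeping around the exact hypotheses of Baernstein's theorem: his inequality is stated for functions vanishing appropriately and for a kernel that is a suitable symmetric decreasing function, and one must check that $\mathds{1}_{\{\langle x,y\rangle\ge\cos\theta\}}$ qualifies (it does, being a bounded, radially nonincreasing, measurable kernel on a compact space, so there are no integrability subtleties here since $\Sd$ has finite measure and all functions are bounded). A secondary point to verify is that the spherical rearrangement of an indicator function is the indicator of a cap of equal measure — this is standard and follows directly from the definition of spherical symmetrization via level sets, exactly as in the Euclidean case treated in \cref{lem-ineq-f}. One should also note that, unlike in $\bR^d$, there is no "vanishing at infinity" condition needed because $\Sd$ is compact; I would remark on this so the reader is not expecting an analogue of that hypothesis. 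With these checks in place the proof is a direct transcription of the proof of \cref{lem-ineq-f}.
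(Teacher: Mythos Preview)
Your proposal is correct and follows exactly the approach the paper indicates: the paper does not spell out a proof but merely remarks that the lemma ``can be derived using spherical rearrangements and the analogue of Riesz's rearrangement inequality for spheres proved by Baernstein~\cite[Theorem~2]{BaernsteinTaylor76dmj}.'' Your write-up supplies precisely those details, choosing $f=h=\mathds{1}_T$ and the symmetric nonincreasing kernel $g(x,y)=\mathds{1}_{\{\langle x,y\rangle\ge\cos\theta\}}$, which is the direct spherical transcription of the proof of Lemma~\ref{lem-ineq-f}.
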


The following result  is a strengthening of Lemma~6 from~\cite{JenssenJoosPerkins18} (whose authors use the upper bound $2\cdot s_d(q(\theta))$, independent of the measure of $T$).

\begin{lemma}\label{lm:cap-intersection} 
	Let $\theta\in(0,\pi/2)$, $\theta'$ and $\sigma(\cdot,\cdot)$ be as in~\cref{eq:cap-inter-angl-radius,eq:optimal-radius} respectively. Let $T\subseteq \Sd$ be a bounded measurable set such that $s(T) = s_d(\alpha)$ with $\alpha \in [\theta',\theta]$
and let $\mathbf{u}$ be a uniform point of $T$. Then
	\begin{equation}
		\bE_\mathbf{u}[\, s(C_{\theta}(\mathbf{u})\cap T)\, ]\le 2\cdot s_d(\sigma(\alpha,\theta)).
	\end{equation}
\end{lemma}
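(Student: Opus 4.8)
The plan is to mimic the proof of Lemma~\ref{lem-strength-exp-bound-intersec} exactly, replacing Euclidean rearrangement with the spherical one. First I would note that
$$
\bE_\mathbf{u}[\, s(C_{\theta}(\mathbf{u})\cap T)\, ]=\frac{1}{s(T)}\int_T s(C_\theta(u)\cap T)\dd u=\frac{f(T)}{s(T)},
$$
with $f$ as in Lemma~\ref{lem:sph-rearrange}. Since $s(C_\theta(u)\cap T)=\int_T \mathds{1}_{\langle u,v\rangle\ge \cos\theta}\dd v$ and the relation ``$v$ is in the cap of angular radius $\theta$ around $u$'' is symmetric in $u,v$, Lemma~\ref{lem:sph-rearrange} gives $f(T)\le f(T^*)$ where $T^*=C_\alpha(p)$ is a spherical cap of the same measure $s_d(\alpha)$ around some fixed pole $p$. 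As $s(T)=s(T^*)$, it suffices to prove the bound when $T=C_\alpha(p)$.

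Next I would run the same ``order by distance to the pole'' trick used for the Euclidean case. Writing $\rho(x):=$ angular distance from $x$ to the pole $p$, we have, for $T=C_\alpha(p)$,
\begin{align*}
\bE_\mathbf{u}[\, s(C_\theta(\mathbf{u})\cap T)\,]&=\frac{1}{s(T)}\int_T\int_T \mathds{1}_{\langle u,v\rangle\ge\cos\theta}\dd v\dd u\\
&=\frac{2}{s(T)}\int_T\int_T \mathds{1}_{\langle u,v\rangle\ge\cos\theta}\mathds{1}_{\rho(v)\le\rho(u)}\dd v\dd u\\
&\le 2\max_{u\in C_\alpha(p)} s\big(C_\theta(u)\cap C_{\rho(u)}(p)\big).
\end{align*}
So I need to bound $s\big(C_\theta(u)\cap C_{\rho(u)}(p)\big)$ where $u$ has angular distance $\rho:=\rho(u)\le \alpha$ from $p$. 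If $\rho\le\theta'$, then $C_\theta(u)\cap C_{\rho}(p)\subseteq C_{\rho}(p)\subseteq C_{\theta'}(p)$, and since $\sigma(\alpha,\theta)\ge\sigma(\theta',\theta)=\theta'$ (using monotonicity of $\sigma$ in $\tau$ from Lemma~\ref{lem-angle-intersec-2caps}, and~\eqref{eq:optimal-radius2}), this has measure at most $s_d(\theta')\le s_d(\sigma(\alpha,\theta))$, well within the claimed bound. If instead $\rho\in[\theta',\alpha]\subseteq[\theta',\theta]$, then $C_{\rho}(p)$ is exactly a cap of angular radius $\rho$ whose centre is at angle $\rho$ from $u$, so Lemma~\ref{lem-angle-intersec-2caps} applies with $\tau=\rho$: the intersection $C_\theta(u)\cap C_{\rho}(p)=C_{\rho}(p)\cap C_\theta(u)$ lies in a spherical cap of angular radius $\sigma(\rho,\theta)\le\sigma(\alpha,\theta)$, again using monotonicity in the first argument. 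Hence in all cases $s\big(C_\theta(u)\cap C_{\rho(u)}(p)\big)\le s_d(\sigma(\alpha,\theta))$, and plugging back gives $\bE_\mathbf{u}[\, s(C_\theta(\mathbf{u})\cap T)\,]\le 2\,s_d(\sigma(\alpha,\theta))$, as desired.

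I expect the only genuine subtlety is making sure the geometric set-up in Lemma~\ref{lem-angle-intersec-2caps} is invoked with the correct roles of the two caps: there the cap of variable radius $\tau$ is the one whose centre is at angle $\tau$ from the centre of the radius-$\theta$ cap, whereas here it is $C_{\rho}(p)$ that plays that role (its radius equals the angular distance from its centre $p$ to $u$), so the hypothesis $\tau\in[\theta',\theta]$ translates to $\rho\in[\theta',\theta]$, which is exactly the case we reduced to. The rest is a direct transcription of the Euclidean argument, with $s_d(\sigma(\alpha,\theta))$ replacing $2^d(1-t^{-2/d})^{d/2}$ and spherical rearrangement replacing Riesz's inequality; no new estimates are needed.
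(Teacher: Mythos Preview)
Your proposal is correct and follows essentially the same argument as the paper's proof: reduce to a cap via Lemma~\ref{lem:sph-rearrange}, symmetrise the double integral using the indicator $\mathds{1}_{\rho(v)\le\rho(u)}$, take the maximum over $u$, and then split into the cases $\rho\le\theta'$ and $\rho\in[\theta',\alpha]$, invoking Lemma~\ref{lem-angle-intersec-2caps} and its monotonicity in $\tau$ in the latter. Your check on the roles of the two caps in Lemma~\ref{lem-angle-intersec-2caps} is exactly right and matches how the paper applies it.
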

\begin{proof}
      By~\cref{lem:sph-rearrange}, we may assume that $T=C_{\alpha}(x)$ is a spherical cap with angular radius $\alpha\in[\theta',\theta]$ centred at some point $x$. Now we compute
	\begin{equation*}
		\begin{split}
			\bE_\mathbf{u}[ \,s(C_{\theta}(\mathbf{u})\cap T)\, ] &= \frac{1}{s(T)} \int_{T^2} \mathds{1}_{\langle u,v\rangle\ge\cos\theta}\dd s(u)\dd s(v)\\
			&= \frac{2}{s(T)} \int_{T^2} \mathds{1}_{\langle u,v\rangle\ge\cos\theta}\cdot \mathds{1}_{\langle x,v\rangle\ge \langle x,u\rangle}\dd s(u)\dd s(v)\\
			&\le 2 \max_{u\in C_{\alpha}(x)} \int_T\mathds{1}_{\langle u,v\rangle\ge\cos\theta}\cdot \mathds{1}_{\langle x,v\rangle\ge \langle x,u\rangle}\dd s(v)\\
			&\le  2 \max_{u\in C_{\alpha}(x)} s(C_{\theta}(u)\cap C_{\gamma}(x)),
		\end{split}
	\end{equation*}
	where $\gamma:=\arccos\langle x,u\rangle\in[0,\alpha]$. 
	
	If $\gamma<\theta'$, we can bound
	 $$s(C_{\theta}(u)\cap C_{\gamma}(x))\le s(C_{\theta'}(x))=s_{d}(\theta').$$
	 Note that this is at most $s_d(\sigma(\alpha,\theta))$ because $\theta'=\sigma(\theta',\theta)$ due to~\cref{eq:optimal-radius2} while 
	 $\sigma(\gamma,\theta)$ is increasing when $\gamma\in [\theta',\alpha]$.
	 
	If $\theta'\le \gamma\le\alpha$, we can bound
	$$s(C_{\theta}(u)\cap C_{\gamma}(x))\le s_d(\sigma(\gamma,\theta))\le s_d(\sigma(\alpha,\theta)),$$ 
	where the first inequality follows from the definition of $\sigma(\cdot,\cdot)$ and the second one is because $\sigma(\gamma,\theta)$ is increasing when $\gamma\in [\theta',\alpha]$ due to~\cref{lem-angle-intersec-2caps}.	
\end{proof}

With identical proof as Lemma~\ref{lem-lower-bd-number-of-centres}, we get the following version for spherical codes.
\begin{lemma}\label{lm:pois-mean-sph} Let $\theta\in (0,\pi/2)$. For every $\beta>0$ there is $k_0$ such that for every $k\ge k_0$ and every $\lambda, t,d>0$, if $T$ is a measurable subset of $\Sd$ of measure $t$ and $k\in \mathbb{N}$ satisfies $k_0\le k\le \lambda t$, then
	\begin{equation}\label{eq:k}
		\alpha_T^{\theta}(\lambda)\ge (1-\beta)p_kk,
	\end{equation}
	where $p_i$ is the probability that for uniform independent $x_1,\dots,x_i\in T$ the angle between every two is at least $\theta$.
\end{lemma}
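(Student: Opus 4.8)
The plan is to run the proof of Lemma~\ref{lem-lower-bd-number-of-centres} word for word, under the translation that replaces the Euclidean hard sphere model on a bounded set by the hard cap model of angle $\theta$ on $T\subseteq\Sd$: the Lebesgue volume $\vol$ becomes the normalised surface measure $s$, the constraint $d(x_i,x_j)\ge 2r_d$ becomes $\langle x_i,x_j\rangle\le\cos\theta$, and the partition function $\hat Z_T$ becomes $\hat Z_T^\theta$, with $p_i$ now the probability that $i$ independent uniform points of $T$ are pairwise at angle at least $\theta$. The two abstract ingredients used in the Euclidean proof transfer with no change: first, the identity $\lambda^i\hat Z_T^\theta(i)=p_i\,\xi^i/i!$ with $\xi:=\lambda t$, together with the description $\bP[\,|X|=i\,]=\lambda^i\hat Z_T^\theta(i)/Z_T^\theta(\lambda)$ for $X\sim\mu_{T,\lambda}^\theta$, which follow exactly as in Lemma~\ref{lm:equiv} with the Poisson process now living on $\Sd$; and second, the monotonicity $1=p_0\ge p_1\ge\cdots$, which is immediate since adding a further point can only create additional pairwise angle constraints.

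With this dictionary in place, the argument is the same three-step estimate. Setting $x:=|X|$ so that $\alpha_T^\theta(\lambda)=\bE[\,x\,]$, one first uses that conditioning on $x\le k$ cannot increase the mean, so it suffices to lower bound $\bE[\,x\mid x\le k\,]$. Second, with $\gamma:=\beta/2$, $m:=\lceil(1-\gamma)k\rceil$, and the monotonicity of the $p_i$,
$$
\bE[\,x\mid x\le k\,]=\frac{\sum_{i=0}^k i\,p_i\,\xi^i/i!}{\sum_{i=0}^k p_i\,\xi^i/i!}\ \ge\ \frac{p_k\sum_{i=m}^k i\,\xi^i/i!}{\sum_{i=0}^k \xi^i/i!}.
$$
Third, writing $N:=\sum_{i=m}^k i\,\xi^i/i!$ and noting that $\xi^i/i!$ increases for $0\le i\le m-1$ (because $m\le k\le\lambda t=\xi$), split the denominator as $\sum_{i=0}^{m-1}\xi^i/i!+\sum_{i=m}^k\xi^i/i!\le m\,\xi^{m-1}/(m-1)!+N/m$, and bound the first term by $mN\me^{-\gamma^2k/2}$ using $\xi/k\ge1$ and $\prod_{i=0}^{k-m}(1-i/k)\le\exp\bigl(-\sum_{i=0}^{k-m}i/k\bigr)\le\me^{-\gamma^2k/2}$. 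Combining the three estimates gives $\bE[\,x\mid x\le k\,]\ge p_k/(m\me^{-\gamma^2k/2}+1/m)$, which exceeds $(1-\beta)p_kk$ once $k_0=k_0(\beta)$ is taken large enough.

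There is essentially no substantive obstacle: the only point to verify is that the Euclidean proof never exploits a feature of $\bR^d$ absent on $\Sd$. On inspection, the argument sees the ambient geometry only through the identity $\lambda^i\hat Z_T^\theta(i)=p_i\,\xi^i/i!$ and the monotonicity of the $p_i$; everything else is purely numerical manipulation of a truncated exponential series. Hence the proof carries over verbatim, which is why the lemma is stated as having an identical proof.
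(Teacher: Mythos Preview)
Your proposal is correct and follows exactly the approach the paper takes: the paper simply states that the proof is identical to that of Lemma~\ref{lem-lower-bd-number-of-centres}, and you have carried out precisely that translation, correctly noting that the only geometry-dependent inputs are the identity $\lambda^i\hat Z_T^\theta(i)=p_i\xi^i/i!$ and the monotonicity of the $p_i$, both of which hold verbatim in the spherical setting.
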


We are now ready to prove~\cref{thm-improve-kissing}.
\begin{proof}[Proof of~\cref{thm-improve-kissing}]
	Given $\eps>0$, choose sufficiently small constants $\beta,\delta$ so that $0<\delta\ll \beta \ll \eps$. Let $d\to\infty$. By~\cref{lem-properties-density-hard-cap},  $\alpha_{d}^{\theta}(\lambda)$ is strictly increasing function in $\lambda$, so it is enough to prove Theorem~\ref{thm-improve-kissing} for $\lambda=\big(\sqrt{2}\sin\frac{\theta}{2}+\delta\big)^{-d}$.
	
	Let $X\sim \mu_{d,\lambda}^{\theta}$ be the set of centres of the caps sampled on $\Sd$ according to the hard cap model with fugacity $\lambda$. Take a point $v\in \Sd$ uniformly at random from $\Sd$, independently of~$X$. As before, let 
	$$
	\mathbf{T}=\mathbf{T}(X,v):=\big\{y\in C^\circ_{\theta}(v): \forall x\in X\setminus C^\circ_{\theta}(v)\ \ \langle x,y\rangle\le \cos\theta\big\}
	$$ 
	be the externally uncovered set around $v$ and let $t:=t(X,v)$ be its measure.
	
	Let $k:=\Big(\log\frac{\sin\theta}{\sqrt{2}\sin\frac{\theta}{2}}-\frac{\eps}{2}\Big)d$. 
	For $X\subseteq \Sd$, let 
	$$
	L=L(X):=\{u\in \Sd: t(X,u)\le k/\lambda\}.
	$$ 
	By~\cref{lem-properties-density-hard-cap} (iv), we have that
	\begin{align*}
		\alpha_{d}^{\theta}(\lambda)&\ge \lambda\, \bE_X \bE_v \big[\,\me^{-\lambda\cdot t(X,v)}\,\big]\ge\lambda\, \bE_X \Big[\,\int_{v\in  \Sd} \me^{-\lambda\cdot t(X,v)}\dd s(v)\Big]\\
		&\ge \lambda\, \bE_X \Big[\,\int_{v\in L} \me^{-\lambda\cdot t(X,v)}\dd s(v)\,\Big]\ge  \lambda\, \bE_X \Big[\,\int_{v\in L} \me^{-k}\dd s(v)\,\Big]\\
		&\ge \me^{\eps d/3}\, (\sin\theta)^{-d}\cdot \bE_X [\,s(L)\,] \ge \me^{\eps d/4}\, s_d(\theta)^{-1}\cdot \bE_X [\,s(L)\,] .
	\end{align*}
	Thus we may assume that, for example, $\bE_X[\,s(L)\,]\le \me^{-\eps d/5}$, for otherwise~\eqref{eq-aim-improve-kissing} holds. By Markov's inequality,
	\begin{equation}\label{eq:VolL}
		\bP_X[\,s(L)\ge \me^{-\eps d/6}\,]\le \me^{-\eps d/30},
	\end{equation}
	that is, for typical outcome $X$, $t$ is ``relatively large'' except for a very small set of $v\in \Sd$.
	
	Take any $X$ with $s(L)\le \me^{-\eps d/6}$. For every $v\in \Sd\setminus L$, $t=t(X,v)$ is at least 
	$$\frac{k}{\lambda}\ge \Big(\sqrt{2}\sin\frac{\theta}{2}+\frac{\delta}{2}\Big)^{d}$$ 
	by the definition of $L$ and at most $s_d(\theta)$ since $\mathbf{T}$ is a subset of $C_{\theta}(v)$.  Again, by the definition of $L$, we have $k\le \lambda t$. 
	Thus Lemma~\ref{lm:pois-mean-sph} applies and gives that for every $v\in \Sd\setminus L$ we have $\alpha_{T(v)}^{\theta}(\lambda)\ge (1-\beta) p_kk$, where  $p_k$ denotes the probability that, for uniform independent $x_1,\dots,x_k\in \mathbf{T}$, the angle between every two is at least $\theta$.

	\begin{claim}\label{claim-pk-kissing}
		$p_k\ge 1-\delta$.
	\end{claim}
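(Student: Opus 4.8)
The plan is to mimic the proof of \cref{cl:1-pk} exactly, transferring the Euclidean argument into the spherical setting using \cref{lm:cap-intersection} in place of \cref{lem-strength-exp-bound-intersec}. First I would record that by the definition of $L$ we have $t=t(X,v)\ge\bigl(\sqrt2\sin\tfrac\theta2+\tfrac\delta2\bigr)^d$, and by monotonicity of $s_d(\cdot)$ this means $t=s_d(\alpha)$ for some angle $\alpha>\theta'$, bounded away from $\theta'$; indeed $s_d(\theta')=s_d(\theta')$ corresponds to measure roughly $(\sqrt2\sin\tfrac\theta2)^d$ up to polynomial factors (by~\eqref{eq:CapVol}), so the extra $\delta/2$ in the base forces $\alpha\ge\theta'+\delta'$ for some $\delta'=\delta'(\delta,\theta)>0$. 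The key quantity is then $g:=s_d(\sigma(\alpha,\theta))/s_d(\alpha)$: since $\sigma(\alpha,\theta)<\alpha$ strictly whenever $\alpha>\theta'$ (this is~\eqref{eq:optimal-radius2} together with the monotonicity in \cref{lem-angle-intersec-2caps}), and since $s_d(\cdot)$ decays exponentially in the angular radius with a uniform gap once the radii differ by a fixed amount (again via~\eqref{eq:CapVol}), we get that $g$ is exponentially small in $d$. By \cref{lm:cap-intersection}, $2g$ upper bounds the expected fraction of $s(\mathbf T)$ covered by a cap $C_\theta(x)$ centred at a uniform random $x\in\mathbf T$.

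Next I would run the same "first bad vertex" argument as in \cref{cl:1-pk}. Sample $x_1,\dots,x_k\in\mathbf T$ independently and uniformly one at a time. Call $x_i$ \emph{bad} if either $s(C_\theta(x_i)\cap\mathbf T)\ge t/d^3$, or $x_i$ lies within angular distance $\theta$ of some earlier $x_j$; otherwise call it \emph{good}. If all of $x_1,\dots,x_k$ are good then in particular no two are within angle $\theta$, so $\langle x_i,x_j\rangle\le\cos\theta$ for all $i\ne j$ and the configuration is a valid spherical code; hence $p_k$ is at least the probability that no $x_i$ is bad. To bound the failure probability, it suffices to show that for each $i$ the probability that $x_i$ is the \emph{first} bad vertex is $o(1/k)$. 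This probability is at most (a) the probability that $s(C_\theta(x_i)\cap\mathbf T)\ge t/d^3$, which by Markov's inequality and the bound of the previous paragraph is at most $2g\cdot d^3$, exponentially small in $d$; plus (b) the probability that $x_i$ falls inside $\bigcup_{j<i}C_\theta(x_j)$ while every $x_j$ ($j<i$) was good, which since each good $x_j$ contributes a forbidden region of measure at most $t/d^3$, is at most $(i-1)/d^3\le k/d^3=o(1/k)$. Summing over $i\in[k]$ gives failure probability $o(1)$, hence $p_k\to1$ as $d\to\infty$, which is at least $1-\delta$ for $d$ large.

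The only genuinely new point compared to the Euclidean case is making the exponential smallness of $g=s_d(\sigma(\alpha,\theta))/s_d(\alpha)$ \emph{uniform} over all admissible $\alpha\in[\theta'+\delta',\theta]$ and explicit in $d$. I would handle this by fixing the continuous function $h(\alpha):=\log\bigl(\sin\alpha/\sin\sigma(\alpha,\theta)\bigr)$ on $[\theta',\theta]$, noting $h(\theta')=0$ and $h>0$ on $(\theta',\theta]$ by~\eqref{eq:optimal-radius2}, so $c:=\min_{\alpha\in[\theta'+\delta',\theta]}h(\alpha)>0$; then~\eqref{eq:CapVol} gives $g\le (1+o(1))\sqrt{\dots}\cdot \me^{-(c+o(1))d}$, which is $o(1/(k d^3))$, comfortably enough. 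The rest is a verbatim transcription of \cref{cl:1-pk}, so I do not anticipate any real obstacle beyond bookkeeping the relation between the measure $t$, the angle $\alpha$, and the slack $\delta$. Concretely:

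\begin{poc}
	Recall that $t\ge \bigl(\sqrt2\sin\tfrac\theta2+\tfrac\delta2\bigr)^{d}$. Write $t=s_d(\alpha)$; by~\eqref{eq:CapVol} and since $s_d(\cdot)$ is strictly decreasing, the lower bound on $t$ forces $\alpha\in[\theta'+\delta',\theta]$ for some $\delta'=\delta'(\delta,\theta)>0$ (using $\sin\theta'=\sqrt2\sin\tfrac\theta2$ from~\eqref{eq:optimal-radius}). By~\eqref{eq:optimal-radius2} and the monotonicity in~\cref{lem-angle-intersec-2caps}, $\sin\sigma(\alpha,\theta)<\sin\alpha$, and setting $c:=\min_{\alpha\in[\theta'+\delta',\theta]}\log\frac{\sin\alpha}{\sin\sigma(\alpha,\theta)}>0$ we get from~\eqref{eq:CapVol} that $g:=s_d(\sigma(\alpha,\theta))/s_d(\alpha)$ is at most $\me^{-(c+o(1))d}$, hence exponentially small in $d$. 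By~\cref{lm:cap-intersection}, $2g$ upper bounds the expected fraction of the measure of $\mathbf T$ covered by a cap $C_\theta(x)$ with $x\in\mathbf T$ chosen uniformly at random.

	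Let $x_1,\dots,x_k\in\mathbf T$ be independent uniform points. Imagine sampling them in order. Call $x_i$ \emph{bad} if $s(C_\theta(x_i)\cap\mathbf T)\ge t/d^3$ or $x_i$ is within angular distance $\theta$ of some $x_j$ with $j<i$; otherwise call $x_i$ \emph{good}. If all of $x_1,\dots,x_k$ are good, then every two are at angular distance more than $\theta$, so it suffices to show that the probability that at least one $x_i$ is bad is $o(1)$ as $d\to\infty$. For this it is enough to show that for each $i$ the probability that $x_i$ is the first bad vertex is $o(1/k)$. Indeed, this probability is at most the probability that $s(C_\theta(x_i)\cap\mathbf T)\ge t/d^3$, which by Markov's inequality is at most $2g\,d^3$ and hence exponentially small in $d$, plus the probability that $x_i$ lies in the forbidden region of the good vertices $x_1,\dots,x_{i-1}$, which is at most $(i-1)/d^3\le k/d^3=o(1/k)$. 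Summing over $i\in[k]$ shows $p_k=1-o(1)\ge 1-\delta$ for $d$ large.
\end{poc}
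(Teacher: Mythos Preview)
Your proof is correct and follows essentially the same route as the paper's own argument, which likewise writes $t=s_d(\alpha)$, uses \cref{lm:cap-intersection} together with the strict inequality in~\eqref{eq:optimal-radius2} to get $\bE_u[s(C_\theta(u)\cap\mathbf T)]\le 2\,s_d(\sigma(\alpha,\theta))=\me^{-\Omega(d)}t$, and then defers to the ``first bad vertex'' argument of \cref{cl:1-pk}. One small slip to fix: $s_d(\cdot)$ is strictly \emph{increasing} (not decreasing) in the angular radius; your conclusion $\alpha\ge\theta'+\delta'$ is nonetheless drawn in the correct direction.
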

	\begin{poc}
		Let $\alpha>0$ be such that $t=s_d(\alpha)$. Then, as $t\ge \big(\sqrt{2}\sin\frac{\theta}{2}+\frac{\delta}{2}\big)^{d}$, we see that $\alpha>\theta'$, where $\theta'$ is as defined in~\eqref{eq:optimal-radius}. Consequently, by~\eqref{eq:optimal-radius2}, we have $\sin\sigma(\alpha,\theta)<\sin\alpha$, where $\sigma(\cdot,\cdot)$ is as in~\eqref{eq:cap-inter-angl-radius}. Thus, by Lemma~\ref{lm:cap-intersection} and \eqref{eq:optimal-radius2}, we get that 
		$$	\bE_u[ \,s(C_{\theta}(u)\cap \mathbf{T}) \,]\le 2\cdot s_d(\sigma(\alpha,\theta))<\me^{-\Omega(d)}\cdot s_d(\alpha)=\me^{-\Omega(d)}\cdot t.$$
		That is, the expected fraction of measure of $\mathbf{T}$ that cap of angular radius $\theta$ at a uniform $u\in \mathbf{T}$ covers is exponentially small in $d\to\infty$. Now we finish the proof the same way in Claim~\ref{cl:1-pk}: basically, since each point `forbids' $o(1/k^2)$-fraction of volume of $\mathbf T$, the probability that some two points conflict with each other is $o(1)$.
	\end{poc}
	Therefore, by Lemma~\ref{lm:pois-mean-sph} and Claim~\ref{claim-pk-kissing}, we have, for every $v\in \Sd\setminus L$, that
	\begin{equation*}
		\alpha_{\mathbf{T}(v)}^{\theta}(\lambda)\ge (1-\beta) p_kk\ge (1-2\beta)k.
	\end{equation*}
	This together with~\cref{lem-properties-density-hard-cap} (v) gives that
	\begin{align*}
			s_d(\theta)\cdot  \alpha_d^{\theta}(\lambda)&= \bE_{X,v}[\,\alpha_\mathbf{T}^{\theta}(\lambda)\,]=\bE_X\left[\,\int _{v\in \Sd} \alpha_{\mathbf{T}(v)}^{\theta}(\lambda)\dd s(v)\,\right]\ge\bE_X\left[\,\int _{v\in \Sd\setminus L} (1-2\beta)k\dd s(v)\,\right]\\
			&\ge (1-\me^{-\eps d/30})(1-\me^{-\eps d/6}) (1-2\beta)k\ge \Big(\log\frac{\sin\theta}{\sqrt{2}\sin\frac{\theta}{2}}-\eps\Big)d,	
		\end{align*}
	where the second inequality is obtained by taking only $X$ with $s(L)\le \me^{-\eps d/6}$ and using~\eqref{eq:VolL}. This proves Theorem~\ref{thm-improve-kissing}.
\end{proof}

\end{document}